\documentclass[12pt]{amsart}
\usepackage[latin1]{inputenc}
\usepackage[T1]{fontenc}
\usepackage[francais]{babel}

\usepackage{amssymb,amsmath,amsthm,amscd,verbatim}
\usepackage[all]{xy}







\newcommand{\codim}{\operatorname{codim}}
\newcommand{\by}[1]{\overset{#1}{\longrightarrow}}
\newcommand{\iso}{\by{\sim}}
\renewcommand{\phi}{\varphi}
\renewcommand{\epsilon}{\varepsilon}
\newcommand{\surj}{\to\!\!\!\!\!\to}

\newcommand{\inj}{\hookrightarrow}
\renewcommand{\lim}{\varprojlim}
 
\newcommand{\osi}{\overset{\sim}{\longleftarrow}}

\newcommand{\Grp}{\operatorname{\mathbf{Grp}}}

\newcommand{\Ker}{\operatorname{Ker}}
\newcommand{\Coker}{\operatorname{Coker}}

\newcommand{\Sm}{\operatorname{\mathbf{Sm}}}

\newcommand{\Frac}{\operatorname{Frac}}
\newcommand{\Hom}{\operatorname{Hom}}

\newcommand{\Cores}{\operatorname{Cor}}
\newcommand{\Spec}{\operatorname{Spec}}

\newcommand{\Ab}{\operatorname{\mathbf{Ab}}}
\newcommand{\Nis}{{\operatorname{Nis}}}

\newcommand{\IM}{{\operatorname{Im}}}
\newcommand{\DM}{{\operatorname{\mathbf{DM}}}}
\newcommand{\CM}{{\operatorname{CM}}}
\newcommand{\HI}{{\operatorname{HI}}}

\newcommand{\et}{{\operatorname{\acute{e}t}}}
\newcommand{\eff}{{\operatorname{eff}}}

\renewcommand{\o}{{\operatorname{o}}}

\newcommand{\NR}{{\operatorname{NR}}}
\newcommand{\proj}{{\operatorname{proj}}}
\newcommand{\nr}{{\operatorname{nr}}}
\newcommand{\st}{{\operatorname{st}}}
\newcommand{\op}{{\operatorname{op}}}
\newcommand{\Br}{{\operatorname{Br}}}
\newcommand{\nab}{{\operatorname{nab}}}
\renewcommand{\neg}{{\operatorname{neg}}}
\newcommand{\Ind}{\operatorname{Ind}}
\newcommand{\Inv}{\operatorname{Inv}}

\newcommand{\fl}{{\operatorname{fl}}}

\newcommand{\sA}{\mathcal{A}}
\newcommand{\sB}{\mathcal{B}}
\newcommand{\sC}{\mathcal{C}}

\newcommand{\sG}{\mathcal{G}}
\newcommand{\sO}{\mathcal{O}}

\newcommand{\sH}{\mathcal{H}}

\newcommand{\sP}{\mathcal{P}}
\newcommand{\sF}{\mathcal{F}}

\newcommand{\A}{\mathbb{A}}

\newcommand{\Q}{\mathbb{Q}}
\newcommand{\Z}{\mathbb{Z}}
\newcommand{\G}{\mathbb{G}}

\newcommand{\bP}{\mathbb{P}}

\newcommand{\cf}{{\it cf.}\ }
\newcommand{\ie}{{\it i.e.}\ }

\newenvironment{thlist}{\begin{list}{\rm{(\roman{enumi})}}%
{\usecounter{enumi}}}%
{\end{list}}

\newtheorem{Th}{Th\'eor\`eme}
\newtheorem{thm}{Th\'eor\`eme}[section]
\newtheorem{prop}[thm]{Proposition}
\newtheorem{Cor}{Corollaire}
\newtheorem{cor}[thm]{Corollaire}

\newtheorem{lemme}[thm]{Lemme}
\newtheorem{construct}[thm]{Construction}

\theoremstyle{definition}
\newtheorem{defn}[thm]{D\'efinition}
\theoremstyle{remark}
\newtheorem{rem}[thm]{Remarque}
\newtheorem{qn}[thm]{Question}
\newtheorem{ex}[thm]{Exemple}
\newtheorem{exs}[thm]{Exemples}

\setcounter{tocdepth}{1}

\numberwithin{equation}{section}

\begin{document}

\title[Modules de cycles et classes non ramifi\'ees]{Modules de cycles et classes non ramifi\'ees sur un espace classifiant}
\author{Bruno Kahn}
\address{IMJ-PRG\\Case 247\\4 place
Jussieu\\75252 Paris Cedex 05\\France}
\email{bruno.kahn@imj-prg.fr}
\author{Nguyen Thi Kim Ngan}
\address{Faculty of Natural Sciences\\Thu Dau Mot University\\ Binh Duong\\Vietnam}
\email{nguyen.t.k.ngan.vn@gmail.com}
\date{14 novembre 2014}
\begin{abstract} Let $G$ be a finite group of exponent $m$ and let $k$ be a field of characteristic prime to $m$, containing the $m$-th roots of unity. For any Rost cycle module $M$ over $k$, we construct exact sequences
\[\begin{CD}
0\to \Inv_k^\nr(G,M_n)\to \Inv_k(G,M_n)@>(\partial_{D,g})>> \displaystyle\bigoplus_{(D,g)} \Inv_k(D,M_{n-1})
\end{CD}\]
where $\Inv_k(G,M_n)$ is Serre's group of invariants of $G$ with values in $M_n$,  $\Inv_k^\nr(G,M_n)$ is its subgroup of unramified invariants, and the ``residue" morphisms $\partial_{D,g}$ are associated to pairs $(D,g)$ where $D$ runs through the subgroups of $G$ and $g$ runs through the homomorphisms $\mu_m\to G$ whose image centralises $D$. This allows us to recover results of Bogomolov and Peyre on the unramified cohomology of fields of invariants of $G$,  and to generalise them. 
\end{abstract}
\subjclass[2010]{20G15, 14C15, 20J06}
\maketitle

\tableofcontents

\section*{Introduction} Soit $G$ un groupe alg\'ebrique lin\'eaire sur un corps $k$, et soit $M=M_*$ un module de cycles sur $k$ au sens de Rost \cite{rost}. Suivant Serre \cite[d\'ef. 1.1]{GMS}, on d\'efinit le \emph{groupe des invariants de $G$ de degr\'e $n$ \`a valeurs dans $M$} 
\[\Inv_k(G,M_n)\]
pour tout entier $n\in \Z$: il s'agit du groupe des transformations na\-tu\-rel\-les $\phi_K:H^1(K,G)\to M_n(K)$ o\`u $K$ d\'ecrit la cat\'egorie des extensions de $k$. On peut aussi parler d'invariants non ramifi\'es au sens de \cite[33.9]{GMS}: ils forment un sous-groupe $\Inv_k^\nr(G,M_n)$, dont la trivialit\'e est une condition n\'ecessaire pour une solution positive au probl\`eme de Noether.

En g\'en\'eral le calcul de $\Inv_k^\nr(G,M_n)$ est difficile, et celui de $\Inv_k(G,M_n)$ est un peu plus facile. Le but de cet article est de r\'eduire le calcul du premier \`a celui du second, dans certains cas particuliers. Le r\'esultat principal est le suivant: 

\begin{Th}\label{t1}  Supposons $G$ fini constant, d'ordre premier \`a la caract\'eristique de $k$. Supposons de plus que $k$ contienne les racines $m$-i\`emes de l'unit\'e, o\`u $m$ est l'exposant de $G$. Alors, \`a tout sous-groupe $D$ de $G$ et \`a tout homomorphisme $g:\mu_m \to Z_G(D)$ (centralisateur de $D$ dans $G$), on peut associer un ``homomorphisme r\'esidu"
\[\partial_{D,g}:\Inv_k(G,M_n)\to \Inv_k(D,M_{n-1})\]
tel que la suite
\[\begin{CD}
0\to \Inv_k^\nr(G,M_n)\to \Inv_k(G,M_n)@>(\partial_{D,g})>> \displaystyle\bigoplus_{(D,g)} \Inv_k(D,M_{n-1})
\end{CD}\]
soit exacte.
\end{Th}

Les hypoth\`eses sont essentielles, en particulier celle sur les racines de l'unit\'e. Pour $G$ et $k$ quelconques, on a encore un complexe comme  ci-dessus, mais les $\partial_{D,g}$ ne sont sans doute pas suffisants  pour d\'etecter les invariants non ramifiés. En trouver la bonne généralisation est un probl\`eme ouvert.\footnote{Mathieu Florence a fait une suggestion dans ce sens au premier auteur, quand $G$ est fini constant mais que $k$ ne contient pas assez de racines de l'unit\'e.}

\begin{Cor}\label{c1} Gardons les hypoth\`eses du th\'eor\`eme \ref{t1} et d\'efinissons
\[\Inv_k^\nab(G,M_n) = \Ker\left(\Inv_k(G,M_n)\to \bigoplus_A \Inv_k(A,M_n)\right)\]
o\`u $A$ d\'ecrit les sous-groupes ab\'eliens de $G$. Alors la suite exacte du th\'eor\`eme \ref{t1} se raffine en une suite exacte
\[\begin{CD}
0\to \widetilde{\Inv}^\nr_k(G,M_n)\to \Inv_k^\nab(G,M_n)@>(\partial_{D,g})>>\displaystyle \bigoplus_{(D,g)} \Inv_k^\nab(D,M_{n-1})
\end{CD}\]
o\`u $\widetilde{\Inv}_k^\nr(G,M_n)$ d\'esigne le sous-groupe des invariants non ramifi\'es normalis\'es (triviaux sur le $G$-torseur neutre).
\end{Cor}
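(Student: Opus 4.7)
The corollary should follow by restricting the exact sequence of Theorem~\ref{t1} to $\Inv_k^\nab$. Two things need to be checked: first, that each residue $\partial_{D,g}$ carries $\Inv_k^\nab(G, M_n)$ into $\Inv_k^\nab(D, M_{n-1})$, so that the restricted sequence makes sense; and second, that the kernel of the restricted map is exactly $\widetilde{\Inv}_k^\nr(G, M_n)$.

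For the first point, I would invoke the natural compatibility of the residues with restriction to subgroups, which should fall out of the construction of $\partial_{D,g}$ used in the proof of Theorem~\ref{t1}. Concretely, for $A \subset D \subset G$ with $g : \mu_m \to Z_G(D) \subset Z_G(A)$, one expects the identity
\[
\mathrm{res}_A^D \circ \partial_{D,g}^G \;=\; \partial_{A,g}^G \;=\; \partial_{A,g}^A \circ \mathrm{res}_A^G.
\]
For $\phi \in \Inv_k^\nab(G, M_n)$ and $A \subset D$ abelian, one has $\mathrm{res}_A^G \phi = 0$, so the composite vanishes and $\partial_{D,g}^G \phi \in \Inv_k^\nab(D, M_{n-1})$, as required.

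For the second point, the kernel of $(\partial_{D,g})$ restricted to $\Inv_k^\nab(G, M_n)$ is $\Inv_k^\nab(G, M_n) \cap \Inv_k^\nr(G, M_n)$ by Theorem~\ref{t1}. Since the trivial subgroup is abelian, every element of $\Inv_k^\nab$ kills the neutral torsor and so is normalized; thus this intersection lies inside $\widetilde{\Inv}_k^\nr(G, M_n)$. The reverse inclusion $\widetilde{\Inv}_k^\nr(G, M_n) \subset \Inv_k^\nab(G, M_n)$ is the main obstacle. Since restriction along $A \hookrightarrow G$ preserves both normalization and the unramified property, it suffices to prove that $\widetilde{\Inv}_k^\nr(A, M_n) = 0$ for every abelian subgroup $A$ satisfying the hypotheses of the theorem. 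I would obtain this either from the explicit description of invariants of abelian groups under the root-of-unity hypothesis (as in Serre \cite{GMS}), or by a direct analysis via Theorem~\ref{t1} applied to $A$, where the pairs $(D,g)$ with $D \subset A$ and $g : \mu_m \to A$ should separate all normalized invariants.
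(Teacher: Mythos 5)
Your skeleton is the same as the paper's (restrict the exact sequence of Theorem \ref{t1} to the ``non-abelian'' subgroups, check that the residues preserve them, then identify the kernel), but each of your two steps has a gap as written.

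\emph{First step.} The first equality $\operatorname{res}_A^D\circ\partial_{D,g}^G=\partial_{A,g}^G$ is fine (it is lemme \ref{simplifieG}), but the second, $\partial_{A,g}^G=\partial_{A,g}^A\circ\operatorname{res}_A^G$, is ill-formed: a residue with ambient group $A$ requires $g$ to take values in $Z_A(A)=A$, whereas your $g$ lands in $Z_G(D)$, which need not be contained in $A$. The deduction ``$\operatorname{res}_A^G\phi=0$, hence the composite vanishes'' therefore fails; for $A=1$ the condition $\operatorname{res}_1^G\phi=0$ merely says that $\phi$ is normalized, and normalized invariants certainly need not be killed by the residues (otherwise, by Theorem \ref{t1}, every normalized invariant would be unramified). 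The correct statement --- the paper's ``lemme de Bogomolov'' (lemme \ref{l-genB}) --- is that $\partial_{A,g}^G$ factors through restriction to the subgroup $\langle A,g(\mu_m)\rangle$ of $G$, the image of the map $A\times\mu_m\to G$, $(a,i)\mapsto a\,g(i)$, and that this subgroup is \emph{abelian} precisely because $g(\mu_m)$ centralises $D\supset A$. Since $\phi\in\Inv_k^\nab(G,M_n)$ dies on every abelian subgroup, in particular on $\langle A,g(\mu_m)\rangle$, one gets $\operatorname{res}_A^D\partial_{D,g}\phi=0$. This use of the centrality of $g(\mu_m)$ is the one non-formal point of the corollary, and it is exactly what your argument skips.

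\emph{Second step.} Your reduction to the vanishing $\widetilde{\Inv}_k^\nr(A,M_n)=0$ for every abelian subgroup $A$ is the right one, but you leave its proof open, and this is where the hypothesis $\mu_m\subset k$ enters: by Fischer's theorem $BA$ is stably rational over $k$ (th\'eor\`eme \ref{t:fischer}), and $A^0_\nr(-,M_n)$ is a stable birational invariant (corollaire \ref{c:st}), whence the vanishing of the reduced unramified group. Your alternative of ``a direct analysis via Theorem \ref{t1} applied to $A$'' does not obviously close: that theorem identifies the unramified invariants of $A$ with the kernel of the residues, but gives no reason for this kernel to reduce to the constant invariants. You should invoke Fischer explicitly.
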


Ce corollaire peut \^etre consid\'er\'e comme une g\'en\'eralisation d'un th\'eor\`eme de Bogomolov \cite[th. 7.1]{ct-s}, qu'il permet de retrouver. Nous retrouvons aussi un th\'eor\`eme de Peyre  \cite[th. 1]{peyre1}.

Voici la strat\'egie de la d\'emonstration. D'apr\`es Totaro \cite[app. C]{GMS}, on a un isomorphisme  canonique
\[\Inv_k(G,M_n)= A^0(BG,M_n)\]
o\`u le groupe de droite est celui de \cite[\S 5]{rost}. Le sous-groupe $\Inv_k^\nr(G,M_n)$ du membre de gauche correspond \`a un sous-groupe $A^0_\nr(BG,M_n)$ du membre de droite. Tout ceci demande un explication, puisque $BG$ n'est pas bien d\'efini en tant que vari\'et\'e lisse: c'était l'objet de \cite{BG1} (notamment \S 6.4) dont la théorie est rappelée en \ref{s.rappels}. 

Les r\'esidus $\partial_{D,g}$ sont d\'efinis au \S \ref{res.geom} en termes de $BG$;  le th\'eor\`eme \ref{t1} et le corollaire \ref{c1} sont d\'emontrés au \S \ref{th.princ};  au \S \ref{bog.peyre}, on fait le raccord avec les r\'esultats de Bogomolov et de Peyre.

Voici une intuition qui peut \'eclairer le th\'eor\`eme \ref{t1}. Soit $X$ une $k$-vari\'et\'e lisse, et supposons donn\'ee une compactification lisse $X\subset \bar X$, telle que $D=\bar X-X$ soit un diviseur \`a croisements normaux. On a alors une suite exacte \cite[rem. 6.9]{kahn0}
\[0\to A^0(\bar X,M_n)\to A^0(X,M_n)\to \bigoplus_{i=1}^r A^0(D_i^\o,M_{n-1})\]
o\`u $D_i$ d\'ecrit l'ensemble des composantes irr\'eductibles de $D$ et $D_i^\o = D_i\setminus\bigcup_{j\ne i} D_j$. Imaginons que $BG$ soit repr\'esent\'e par une vari\'et\'e $X$ comme ci-dessus. Dans le th\'eor\`eme \ref{t1}, tout se passe comme si on pouvait alors trouver une compactification $\bar X$ tel que les $D_i^\o$ soient de la forme $BD$, o\`u $D$ d\'ecrit les sous-groupes de $G$\dots

Ce travail est une version remani\'ee et augment\'ee de la seconde partie de la th\`ese du second auteur (N.T.K.N.)  \cite{nganthese}, la première partie étant parue dans \cite{BG1}; il a été annoncé dans \cite{nganCR}. B.K. tient \`a remercier N.T.K.N. de l'avoir invit\'e \`a r\'ediger ce texte en commun.

\subsection*{Guide de lecture} On utilise le langage développé dans \cite{BG1} et rappelé au \S \ref{s.rappels}: si $G$ est un groupe algébrique linéaire et $F$ est un foncteur sur les variétés lisses vérifiant des axiomes simples, on peut donner un sens à $F(BG)$.

La section \ref{res.geom} introduit l'outil principal de l'article: les r\'esidus g\'eom\'etriques $\partial_{D,g}$. La section \ref{s9} encha\^\i ne en montrant que les classes sur $BG$, non ramifi\'ees au sens de \cite{ct-o}, sont non ramifi\'ees au sens des r\'esidus g\'eom\'etriques.

La section \ref{th.princ} est le c\oe ur de ce travail. On y d\'emontre que r\'eciproquement, si $G$ est fini constant d'exposant $m$ et que $k$ est de caract\'eristique premi\`ere \`a $m$ et contient les racines $m$-i\`emes de l'unit\'e, les classes sur $BG$, non ramifi\'ees au sens des r\'esidus g\'eom\'etriques, le sont au sens de \cite{ct-o}. La raison pour laquelle les r\'esidus g\'eom\'etriques suffisent dans ce cas semble \^etre que, si $G$ est le groupe de Galois d'une extension $L/K$ o\`u $K\supset k$ et que $w$ est une valuation discr\`ete de rang $1$ sur $L$, le groupe d'inertie de $w$ est \emph{central} dans son groupe de d\'ecomposition.

Enfin, la section \ref{s11} reformule le r\'esultat pr\'ec\'edent de mani\`ere universelle, tandis que la section \ref{bog.peyre} l'utilise pour retrouver (et g\'en\'eraliser) des th\'eor\`emes de Bogomolov et Peyre \cite{bogomolov,peyre1}.

\section{R\'esidus g\'eom\'etriques}\label{res.geom}

Nous introduisons ici des ``r\'esidus g\'eom\'etriques'', inspir\'es par les travaux de Peyre et de Voevodsky. On fixe un corps de base $k$.

\subsection{Rappels de \cite{BG1}} \label{s.rappels} Notons  $\Sm_\fl$ la catégorie des $k$-variétés lisses, les morphismes étant les morphismes plats. Pour $r\ge 1$, on note comme dans \cite[déf. 2.3]{BG1} $S_r$ l'ensemble  des morphismes de $\Sm_\fl$, union de l'ensemble des projections d'espace total un fibré vectoriel et de l'ensemble des immersions ouvertes $j:U\to X$ de coniveau $\ge r$ (par définition, cela signifie que $\codim_X(X-U)\ge r$). Notons d'autre part $\Grp$ la catégorie des $k$-groupes algébriques linéaires (morphismes: les homomorphismes de groupes algébriques); dans \cite[déf. 2.14]{BG1}, on a défini une suite compatible de foncteurs
\begin{equation}
B_r:\Grp\to S_r^{-1} \Sm_\fl, \quad r\ge 1
\end{equation}
où le membre de droite est la localisation à la Gabriel-Zisman de $\Sm_\fl$ relativement à $S_r$. 

Pour référence ultérieure, rappelons la construction de $B_r$. On choisit une représentation linéaire $V$ de $G$ très fidèle de coniveau $\ge r$: cela signifie \cite[déf. 2.7]{BG1} que $V$ contient un ouvert $U$ $G$-invariant, espace total d'un $G$ torseur et tel que $\codim_V(V-U)\ge r$. L'existence de $V$ est assurée par un théorème de Totaro \cite[rem. 1.4]{totaro}. On définit $B_rG$ (essentiellement) comme $U/G\in S_r^{-1}\Sm_\fl$: le ``lemme sans nom'' permet de montrer que $B_rG$ est indépendant du choix de $V$ à isomorphisme unique près, et fonctoriel en $G$.

Un foncteur $F$ de $\Sm_\fl$ vers une catégorie $\sC$ est dit \emph{homotopique et pur en coniveau $\ge r$} s'il inverse les morphismes de $S_r$ (en particulier, $F$ est invariant par homotopie). De manière équivalente, $F$ induit un foncteur $S_r^{-1}\Sm_\fl\by{F_r} \sC$, et on pose
\[F(BG) = F_r(B_rG).\]

On renvoie \`a \cite[\S 3.2 et \S 5]{BG1} pour des exemples de tels foncteurs: cohomologie étale, cohomologie motivique, cohomologie de cycles de Rost.

Rappelons également la définition suivante \cite[déf. 3.8]{BG1}:

\begin{defn}\label{tildeF(BG)}
Soit $F$ un foncteur de $\Sm_\fl$ \`a valeurs dans la cat\'egorie des groupes abéliens. Pour tout $G\in\Grp$, on pose
\[\tilde{F}(BG)=\Ker (F(B\epsilon))\]
o\`u $\epsilon$ est l'idempotent $G\to \Spec(k) \to G$. C'est la \emph{partie réduite} de $F(BG)$.
\end{defn}


Dans \cite{BG1} on n'a utilis\'e que la fonctorialit\'e plate sur les $k$-sch\'emas lisses: cela permet par exemple de d\'efinir \'economiquement le foncteur $G\mapsto CH^n(BG)$ en n'utilisant que la contravariance ``facile" des groupes de Chow, comme Totaro dans \cite{totaro}. Ici nous aurons aussi besoin de fonctorialit\'e pour certaines immersions ferm\'ees.

\subsection{La construction $F_{-1}$ de Voevodsky} 

\begin{defn}\label{defn-Voe}
Soit $F$ un foncteur contravariant de $\Sm_\fl$ vers la cat\'egorie $\Ab$ des groupes ab\'eliens. On d\'efinit pour $X\in \Sm_\fl$ (\cf \cite[Lect. 23]{MVW}):
\begin{equation}\label{res-Voe}
F_{-1}(X)=\Coker (F(X\times \A^1) \to F(X\times (\A^1-\{0\})))
\end{equation}
et on note 
\[\partial: F(X\times \G_m)\to F_{-1}(X).\]
Si $F$ est contravariant pour les immersions ferm\'ees r\'eguli\`eres, on note
\begin{equation}\label{eq:8.2}
s:F(X\times \G_m)\to F(X)
\end{equation}
le morphisme d\'efini par la section unit\'e de $\G_m$.
\end{defn}

Le lemme suivant se d\'emontre sans difficult\'e:

\begin{lemme}\label{F_{-1}(BG)}
Si $F$ est homotopique et pur en coniveau $\geq r$, $F_{-1}$ l'est aussi.\qed
\end{lemme}

\begin{lemme}\label{iso-res}
Si $F$ est invariant par homotopie et contravariant pour les immersions ferm\'ees r\'eguli\`eres, alors 
\[(s,\partial):F(X\times \G_m)\to F(X)\oplus F_{-1}(X)\]
est un isomorphisme pour tout $X$.
\end{lemme}

\begin{proof}
Comme $F$ est invariant par homotopie, $F(X)\iso F(X\times \A^1)$. De plus, on a le diagramme commutatif suivant
\begin{displaymath}
\xymatrix{
F(X\times \A^1)\ar[r]& F(X\times \G_m)\ar[dl]_{s}\ar[r]^{\partial}& F_{-1}(X)\ar[r]& 0\\
F(X)\ar[u]^{\wr}\ar @/_/[ur]_{p^*}
}
\end{displaymath}
o\`u $p:X\times \G_m \to X$ est la projection sur $X$. Comme $s\circ p^*=id$, on a
\[F(X\times \G_m)\iso F(X)\oplus F_{-1}(X).\]
\end{proof}

\begin{exs}[$F_{-1}(X)$ pour certains foncteurs $F$]\label{exF_{-1}(X)}\
\begin{enumerate}
	\item Pour $F(X)=H^i(X,\Z(n))$, on a
\[F_{-1}(X)=H^{i-1}(X,\Z(n-1))\]
\cite[23.1]{MVW}. 
On a le m\^eme r\'esultat pour la cohomologie \'etale et la cohomologie motivique \'etale \ie
\begin{itemize}
	\item  Si $F(X)=H^i_{\et}(X,M)$, alors $F_{-1}(X)=H^{i-1}_{\et}(X,M(-1))$;
	\item  Si $F(X)=H^i_{\et}(X,\Z(n))$, alors $F_{-1}(X)=H^{i-1}_{\et}(X,\Z(n-1))$.
\end{itemize}

 \item Soit $F(X)=A^0(X,M_n)$, où $(M_n)_{n\in\Z}$ est un module de cycles au sens de Rost \cite[\S 5]{rost}. On a une suite exacte de localisation
 \begin{multline*}
 A^0(X\times \A^1,M_n)\to A^0(X\times \G_m,M_n) \by{(4)} A^0(X,M_{n-1})\\
 \by{(5)} A^1(X\times \A^1,M_n)\by{(6)} A^1(X\times \G_m,M_n)
 \end{multline*}
 
Par le lemme \ref{iso-res} et l'invariance par homotopie de la cohomologie de cycles, (6) est injectif scind\'e, donc (5) est nul et (4) est surjectif, soit
 \[F_{-1}(X)=A^0(X,M_{n-1}).\]
 
Ce calcul a implicitement utilis\'e la contravariance de la cohomologie de cycles pour les immersions ferm\'ees \cite[\S 12]{rost}.
\end{enumerate}
\end{exs}

\subsection{R\'esidus g\'eom\'etriques ``universels"}\label{res.univ}

Soit $G$ un $k$-groupe alg\'ebrique lin\'eaire et soit $H$ un sous-groupe ferm\'e de $G$; soit $r\ge 1$.  Dans \cite[Ex. 2.18]{BG1} on a construit un morphisme dans $S_r^{-1}\Sm_\fl$
\begin{equation}\label{eq:G/H2}
G/H\to B_r H.
\end{equation}

\begin{construct}\label{c.univ} Soit $m$ un entier inversible dans $k$. Soit $X$ un sch\'ema lisse sur $k$. Soit $F$ un foncteur homotopique et pur en coniveau $\geq r$. Le \emph{morphisme r\'esidu universel}
\begin{equation}\label{residu}
\partial_m:F(X\times B\mu_m) \to F_{-1}(X)
\end{equation}
est d\'efini de la mani\`ere suivante:

En utilisant \eqref{eq:G/H2} avec $G=\G_m$ et $H=\mu_m$ et en remarquant que $\G_m/\mu_m\iso \G_m$, on obtient une fl\`eche canonique $\G_m\to B_r \mu_m$ dans $S_r^{-1}\Sm_\fl$, d'o\`u une
composition: 
\[\partial_m:F(X\times B\mu_m) \to F(X\times \G_m) \by{\partial} F_{-1}(X).\]
\end{construct}

De mani\`ere \'equivalente, soit $U$ un $\G_m$-torseur lin\'eaire de coniveau $\geq r$ \cite[déf. 2.7]{BG1}. Consid\'erons le diagramme:
\begin{equation}\label{eq:8.1}
\xymatrix{
& \G_m\ar[d] & U\times \G_m \ar[l]_{\pi_1} \ar[r]^{\pi_2}\ar[d]& U\ar[d]\\
\G_m & \G_m/\mu_m \ar[l]^{\sim}_{\times m} & (U\times \G_m)/\mu_m \ar[l]_{\bar{\pi}_1}\ar[r]^{\bar{\pi}_2}&U/\mu_m.
}
\end{equation}

La ligne du bas induit un diagramme
\begin{multline*}
F(X\times (U/\mu_m)) \to F(X\times (U\times \G_m)/\mu_m)\\
\osi F(X\times (\G_m/\mu_m)) \osi F(X\times \G_m) \to F_{-1}(X).
\end{multline*}
D'o\`u on d\'eduit \eqref{residu}.

Nous aurons besoin du lemme suivant, qui r\'esulte de la description du morphisme bord d'une longue suite exacte de cohomologie:

\begin{lemme}\label{l:partial} Soit $M$ un module de cycles.  Notons  $\partial_0$ le r\'esidu relatif \`a la valuation discr\`ete sur $k(t)=k(\A^1)$ correspondant \`a l'origine de $\A^1$. Notons d'autre part $\delta$ le bord de la suite exacte de localisation relative \`a l'immersion ouverte $\G_m\allowbreak\inj \A^1$. Alors le diagramme
\[\xymatrix{
A^0(\G_m,M_n)\ar[r]^\delta\ar@{^{(}->}[d] & A^0(k,M_{n-1})\ar[d]^{=}\\
M_n(k(t))\ar[r]^{\partial_0} & M_{n-1}(k)
}\]
est commutatif.\qed
\end{lemme}

\begin{lemme}\label{residuP}
Soit $M$ un module de cycles sur $k$, et soit $n\in\Z$. Alors le morphisme r\'esidu \eqref{residu} pour $A^0(-,M_n)$ est com\-pa\-tible avec celui consid\'er\'e par Peyre dans \cite[(13) p. 207]{peyre1}. \\
Plus pr\'ecis\'ement, soient $\chi:\mu_m\inj k^*$ le caract\`ere canonique de $\mu_m$, $W_\chi$ la repr\'esentation fid\`ele de dimension $1$ correspondante, $B_\chi$  la valuation discr\`ete de rang $1$ sur $k(X\times W_\chi)=k(X)(T)$ d\'efinie par le diviseur $T=0$ et  $A_\chi$ sa trace sur $k(X\times W_\chi)^{\mu_m}$. Alors on a un diagramme commutatif
\[
\xymatrix{
A^0(X \times B\mu_m,M_n) \ar[r]^{\partial_m}\ar@{^(->}[d]& A^0(X,M_{n-1})\ar@{^(->}[d]\\
M_n(k(X\times W_\chi)^{\mu_m}) \ar[r]^{\partial_{A_{\chi}}}& M_{n-1}(k(X)).
}
\]
\end{lemme}

\begin{proof} Par fonctorialit\'e, on peut remplacer $X$ par son corps des fonctions, donc (quitte \`a changer de corps de base) supposer que $X=\Spec k$.

Soit $U=\A^r-\{0\}$; faisons op\'erer $\G_m$ sur $\A^r$ et donc sur $U$ par homoth\'eties. Choisissons un point rationnel $x\in U(k)$: ce point d\'efinit un morphisme $\G_m$-\'equivariant $\phi: \G_m \to U$, d'o\`u un morphisme:
\[\bar{\phi}:\G_m \osi \G_m/\mu_m \to U/\mu_m.\]

Soit $\gamma:\G_m \to U\times \G_m$ le transpos\'e du graphe de $\phi$: avec les notations de \eqref{eq:8.1}, c'est une section $\mu_m$-\'equivariante de $\pi_1$ telle que $\pi_2\circ \gamma=\phi$. En prenant les quotients par $\mu_m$, $\gamma$ induit une section $\bar{\gamma}$ de $\bar{\pi}_1$ telle que $\bar{\pi}_2\circ \bar{\gamma}=\bar{\phi}$, ce qui implique que la composition:
\[A^0((U/\mu_m),M_n) \by{\bar{\phi}^*} A^0(\G_m,M_n) \by{\delta} M_{n-1}(k)\]
est \'egale \`a $\partial_m$, o\`u $\delta$ est le morphisme bord pour la suite exacte de localisation relative \`a l'immersion ouverrte $\G_m\inj \A^1$.

Finalement, on a $\phi(\G_m)=L-\{0\}$ o\`u $L$ est la droite $kx$. L'assertion r\'esulte maintenant du lemme \ref{l:partial}.
\end{proof}

\subsection{R\'esidus g\'eom\'etriques ``\`a la Peyre"}

\begin{defn}\label{residuBG}
 Soit $F$ comme dans la construction \ref{c.univ}. Soient $G\in\Grp$, $D \subset G$ un sous-groupe ferm\'e et $g: \mu_m \to Z_G(D)$ un homomorphisme, o\`u $Z_G(D)$ d\'esigne le centralisateur de $D$ dans $G$. On note $\phi:D\times \mu_m \to G$ l'homomorphisme d\'efini par $\phi(d,i)=d.g(i)$. On d\'efinit un morphisme:
\[\partial_{D,g}: F(BG) \by{\phi^*} F(B(D\times \mu_m))\iso F(BD\times B\mu_m) \by{\partial_m} F_{-1}(BD)\]
o\`u $\partial_m$ est comme dans \eqref{residu}, l'isomorphisme provient de l'isomorphisme $B_r(D\times \mu_m)\simeq B_r D\times B_r\mu_m$ de \cite[lemme 2.24]{BG1} et $F_{-1}(BD)$ est d\'efini par le lemme \ref{F_{-1}(BG)}.
\end{defn}

\begin{prop}\label{diag-comm}
$\partial_{D,g}$ est canonique et fonctoriel en $F$.\qed
\end{prop}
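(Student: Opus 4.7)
The proposition is essentially a formality unwinding the canonicity built into the previous sections, but it is worth structuring the verification cleanly. The plan is to check separately that each of the three building blocks of $\partial_{D,g}$ is canonical and natural in $F$, then to conclude by composition.

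First I would address canonicity. The arrow $\phi^*:F(BG)\to F(B(D\times\mu_m))$ is obtained by applying $F$ to the morphism $B_r(\phi)$ provided by Proposition \ref{foncteurF(BG)}; that proposition guarantees independence of any choice of torsors representing $B_rG$ and $B_r(D\times\mu_m)$, and compatibility with change of $r$. The middle isomorphism $F(B(D\times \mu_m))\iso F(BD\times B\mu_m)$ comes by applying $F$ to \eqref{eqB(GH)}, which is canonical and bifunctorial. Finally, the map $\partial_m:F(BD\times B\mu_m)\to F_{-1}(BD)$ from construction \ref{c.univ} is the composition of $F$ applied to the canonical morphism $BD\times \G_m\to BD\times B\mu_m$ coming from \eqref{eq:G/H2} (independent of the chosen $\G_m$-torseur since the relevant $\Psi_{U,U'}$ form a transitive system of isomorphisms, construction \ref{propphi}) and the map $\partial$ of \eqref{res-Voe}, which depends only on $F$. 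Composing, $\partial_{D,g}$ is defined without auxiliary choices.

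Next I would check functoriality in $F$. Let $\alpha:F\to F'$ be a natural transformation between homotopy/pure functors on $\Sm_\fl$. Each of the three arrows above is either obtained from $F$ by pullback along a fixed morphism of $\Sm_\fl$ (or of $S_r^{-1}\Sm_\fl$), or is the cokernel map \eqref{res-Voe}. The former is natural in $F$ tautologically; the latter is natural because $F_{-1}$ is defined as a functor of $F$ via \eqref{res-Voe}, so $\alpha$ induces $\alpha_{-1}:F_{-1}\to F'_{-1}$ commuting with $\partial$. Assembling the three commutative squares yields $\alpha_{-1}(BD)\circ \partial_{D,g}^F=\partial_{D,g}^{F'}\circ \alpha(BG)$.

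I do not anticipate any genuine obstacle: the real work is already absorbed into the proof of Proposition \ref{foncteurF(BG)} (functoriality of $B_r$ in $G$) and the bifunctoriality of \eqref{eqB(GH)}. The only small subtlety worth flagging explicitly is that, in applying $F$ to $B_r(\phi)$, one must know that $\phi:D\times \mu_m\to G$ is indeed a group homomorphism — this is exactly where the hypothesis $g(\mu_m)\subset Z_G(D)$ enters, since it ensures that $(d,i)\mapsto d\cdot g(i)$ is multiplicative.
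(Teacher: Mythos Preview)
Your proposal is correct and matches the paper's treatment: the paper gives no proof at all (the statement is immediately followed by \qed), regarding the canonicity and functoriality as evident from the construction, and your write-up simply spells out that verification in the obvious way.
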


\begin{exs}
D'apr\`es les exemples \ref{exF_{-1}(X)}, on a des r\'esidus  sui\-vants:
\begin{align*}
H^n_{\et}(BG,\mu_m^{\otimes j}) &\to H^{n-1}_{\et}(BD,\mu_m^{\otimes (j-1)}),\\
H^n_{\et}(BG,\Z(q)) &\to H^{n-1}_{\et}(BD,\Z(q-1)),\\
H^n(BG,\Z(q)) &\to H^{n-1}(BD,\Z(q-1)),\\
A^0(BG,M_n) &\to A^0(BD,M_{n-1}).
\end{align*}
\end{exs}

\begin{rem} Supposons $G$ fini constant, d'exposant $e$ premier \`a la caract\'eristique de $k$. Alors tout homomorphisme $g$ comme ci-dessus a pour image un sous-groupe cyclique d'ordre $m'$ divisant $e$, donc se factorise par $\mu_{m'}$ via la surjection $\mu_m\to \mu_{m'}$; de plus, $\mu_{m'}$ est constant. Si $g':\mu_{m'}\to Z_G(D)$ est l'homomorphisme induit, on voit tout de suite que $\partial_{D,g}=\partial_{D,g'}$ en comparant deux suites exactes de Kummer. On peut donc se limiter aux valeurs de $m$ divisant $e$ et telles que $\mu_m\subset k$.
\end{rem}

\subsection{Le r\'esidu d'un cup-produit} Le r\'esultat principal de ce num\'ero (th\'eor\`eme \ref{t.cup}) ne sera pas utilis\'e dans la suite; il est n\'eanmoins tr\`es utile pour des calculs concrets.

\subsubsection{D\'ecomposition de la diagonale} 
Soient $X\in \Sm(k)$ et $f:X\to \G_m$ ($f\in \Gamma(X,\sO_X^*)$). Soit $\Gamma_f:X\to X\times \G_m$ le graphe de $f$. Soit $F:\Sm(k)^\op\to \Ab$ un foncteur homotopique et pur en coniveau $\geq c$. On a le diagramme suivant:
\begin{displaymath}
\xymatrix{
F(X\times \A^1) \ar[r] & F(X\times \G_m) \ar[r]\ar[d]^{\Gamma^*_f} & F_{-1}(X) \ar[r]& 0\\
& F(X)\ar[ul]^{\sim}
}
\end{displaymath}
Si $f=1$, alors $\Gamma^*_f=s^F$ (la section de \eqref{eq:8.2}) et donc on a
\begin{equation}\label{eq-iso-res}
F(X\times \G_m)\iso F(X)\oplus F_{-1}(X)
\end{equation}
d'apr\`es le lemme \ref{iso-res}. Alors $\Gamma^*_f -\Gamma^*_1=0$ sur $F(X\times \A^1)$, donc induit un morphisme
\begin{equation}\label{eq-F-1aF}
\{f\}^*:F_{-1}(X)\to F(X).
\end{equation}

Consid\'erons maintenant la diagonale $\G_m \by{\Delta} \G_m\times \G_m$. On a le morphisme
\[(1_X \times \Delta)^*: F(X\times \G_m\times \G_m) \to F(X\times \G_m).\]
Par transport de structure, il d\'efinit un morphisme $\tilde{\Delta}$:
\begin{displaymath}
\xymatrix{
F(X\times \G_m\times \G_m) \ar[r]^{(1_X\times \Delta)^*}\ar[d]^{\wr} & F(X\times \G_m)\ar[d]^{\wr}\\
F(X)\oplus 2F_{-1}(X)\oplus F_{-2}(X)\ar[r]^{\qquad \tilde{\Delta}} & F(X)\oplus F_{-1}(X)
}
\end{displaymath}
o\`u $F_{-2}=(F_{-1})_{-1}$ et l'isomorphisme de gauche est obtenu en appliquant deux fois le lemme \ref{iso-res}. 

On va calculer $\tilde{\Delta}$ dans le cas particulier o\`u $F$ provient d'un foncteur sur $\DM:=\DM_\Nis^{\eff,-}(k)$ \cite[Lect. 14]{MVW}. Le lemme suivant justifie la notation \eqref{eq-F-1aF}:

\begin{lemme}\label{surDMeffgm}
Si $F$ provient d'un foncteur sur $\DM$ et $X=\Spec k$, alors $\{f\}^*$ est induit par le cup-produit par $\{f\}\in K^M_1(k)=H^1(k,\Z(1))$ pour $f\in k^*$.
\end{lemme}

\begin{proof}
Dans ce cas, $\Gamma_f^*-\Gamma_1^*$ provient de 
\[M(X)\by{\Gamma_f-\Gamma_1} M(X\times \G_m)=M(X)\oplus M(X)(1)[1] \by {pr} M(X)(1)[1]\]

 Si $X=\Spec k$, d'o\`u $f\in k^*$, alors
\[\Gamma_f-\Gamma_1 \in \Hom(\Z,\Z(1)[1])\cong K^M_1(k)\ \text{\cite[4.2]{MVW}}.\]

On v\'erifie que cet isomorphisme identifie $\Gamma_f-\Gamma_1$ \`a $\{f\}\in K^M_1(k)\cong k^*$ (voir \cite[preuve de 4.4]{MVW}).
\end{proof}

\begin{prop}\label{formule-res}
Supposons que $F$ se factorise en 
\[\Sm(k)^\op\by{M^\op} \DM^\op \by{F} \Ab.\]
Alors $\tilde{\Delta}$ est \'egale \`a 
\begin{displaymath}
\left(
\begin{array}{ccc}
1 & 0& 0\\
0 & \sum & \{-1\}
\end{array}
\right).  
\end{displaymath}
\end{prop}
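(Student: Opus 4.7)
The plan is to reduce the assertion to a purely motivic computation in $\DM$, exploiting the hypothesis that $F$ factors through $M^{\op}:\Sm(k)^{\op}\to\DM^{\op}$. Since $(1_X\times\Delta)^*=F(M(1_X\times\Delta))$ and $M(1_X\times\Delta)=1_{M(X)}\otimes M(\Delta)$, it suffices to identify the morphism $M(\Delta):M(\G_m)\to M(\G_m\times\G_m)$ in $\DM$.

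I would fix the canonical splitting $M(\G_m)=\Z\oplus\Z(1)[1]$ given by the unit section $e:\Spec k\to\G_m$ and the structural projection; by K\"unneth,
\[M(\G_m\times\G_m)=\Z\oplus\Z(1)[1]\oplus\Z(1)[1]\oplus\Z(2)[2].\]
Most of the eight entries of $M(\Delta)$ relative to these splittings are forced. Precomposition with $e$ yields $(e,e)_*=e_*\otimes e_*$, which lies in the $\Z$-summand, so the $\Z$-column of $M(\Delta)$ is the inclusion $(1,0,0,0)^{\mathrm t}$; post-composition with each projection $M(p_i)$ must recover $1_{M(\G_m)}$, forcing the two components $\Z(1)[1]\to\Z(1)[1]$ to be identities and the component $\Z(1)[1]\to\Z$ to vanish. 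The only non-trivial new datum is the ``correction'' $\epsilon:\Z(1)[1]\to\Z(2)[2]$.

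The heart of the proof is the identification of $\epsilon$ under the cancellation isomorphism $\Hom_\DM(\Z(1)[1],\Z(2)[2])\simeq H^1(k,\Z(1))=k^*$. I would dualise: the K\"unneth summand $\Hom_\DM(\Z(2)[2],\Z(2)[2])\simeq\Z$ inside $H^2(\G_m\times\G_m,\Z(2))$ is generated by the external product $\{t_1\}\{t_2\}$, and $\Delta^*(\{t_1\}\{t_2\})=\{t\}\{t\}\in H^2(\G_m,\Z(2))$. The Milnor $K$-theoretic identity $\{a,a\}=\{-1,a\}$, a classical consequence of $\{a,-a\}=0$ (itself deduced from Steinberg), gives $\{t\}\{t\}=\{-1\}\{t\}$; under the decomposition $H^2(\G_m,\Z(2))=K^M_2(k)\oplus(k^*\cdot\{t\})$ this element lies in the second summand and corresponds precisely to $\{-1\}\in k^*$. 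Hence $\epsilon=\{-1\}$.

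Assembling the pieces: $M(\Delta)$ has the matrix indicated above, with $\{-1\}$ in the bottom right. Applying $F$ transposes it; the two $F_{-1}(X)$-rows, regrouped on the source of $\tilde\Delta$ as a single factor $2F_{-1}(X)$, collapse into the sum map; and by lemme \ref{surDMeffgm} the transpose of $\epsilon$ becomes the cup product with $\{-1\}:F_{-2}(X)\to F_{-1}(X)$. This produces exactly the announced matrix. The only delicate step is the identification of $\epsilon$; everything else is bookkeeping on the Kunneth splittings.
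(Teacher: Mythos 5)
Your proof is correct and follows the same route as the paper: everything reduces to computing $M(\Delta):M(\G_m)\to M(\G_m)^{\otimes 2}$ in the K\"unneth decomposition determined by the unit section, where all entries except the component $\Z(1)[1]\to\Z(2)[2]$ are forced by naturality (or vanish for weight reasons), and applying the contravariant $F$ then transposes the matrix into the announced form. The only difference is that the paper obtains the remaining component by citing \cite[lem. 7.4 et cor. 7.9 (b)]{HK}, whereas you rederive it directly from $\Delta^*(\{t_1\}\{t_2\})=\{t,t\}=\{-1,t\}$ read in the decomposition $H^2(\G_m,\Z(2))=K_2^M(k)\oplus k^*\cdot\{t\}$; this is precisely the computation underlying the cited corollary, so the two arguments agree in substance, yours being self-contained.
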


\begin{proof}
On choisit la d\'ecomposition $M(\G_m)=\Z\oplus \Z(1)[1]$ donn\'ee par le point $1\in \G_m$. Cela donne exactement \eqref{eq-iso-res} sur $F(X\times \G_m)$ et $F_{-1}(M(X))=F(M(X)(1)[1])$. On a le diagramme commutatif suivant:
\begin{displaymath}
\xymatrix{
F(M(X)\otimes M(\G_m\times \G_m)) \ar[r]^{\quad (1_X \times \Delta)^*}\ar[d]^{\wr} & F(M(X)\otimes M(\G_m))\ar[d]^{\wr}\\
F(M(X)\otimes (\Z\oplus \Z(1)[1])^{\otimes 2}) \ar[r] \ar[d]^{\wr}& F(M(X)\otimes (\Z\oplus \Z(1)[1]))\ar[d]^{\wr}\\
F(M(X))\oplus 2F_{-1}(M(X))\oplus F_{-2}(M(X))\ar[r]^{\qquad \tilde{\Delta}} & F(M(X))\oplus F_{-1}(M(X))
}
\end{displaymath}

Dans ce cas, $\tilde{\Delta}$ est induit par
\[M(\Delta): \Z\oplus \Z(1)[1] \to \Z \oplus 2\Z(1)[1]\oplus \Z(2)[2]\]
et on a
\[\Hom(\Z(j)[j],\Z(i)[i])=
\begin{cases}
\Hom(\Z,\Z(i-j)[i-j])=K^M_{i-j}(k)& \text{si } i\geq j,\\
                        0&  \text{si } i<j.
\end{cases}\]

D'apr\`es \cite[lem. 7.4 et cor. 7.9 (b)]{HK}, on trouve que $\tilde{\Delta}$ est de la forme
\begin{displaymath}
\left(
\begin{array}{ccc}
1 & 0& 0\\
0 & \sum & \{-1\}
\end{array}
\right)  
\end{displaymath}
o\`u 
$\sum:2F_{-1}(M(X))\to F_{-1}(M(X))$ est induit par $\Z(1)[1]\to 2\Z(1)[1]$ et $\{-1\}:F_{-2}(M(X))\to F_{-1}(M(X))$ est induit par $\{-1\}:\Z(1)[1]\to\Z(2)[2]$ (\cf lemme \ref{surDMeffgm}), ce qui correspond \`a la formule de \cite[cor. 7.9(b)]{HK}. 
\end{proof}

\subsubsection{R\'esidus g\'eom\'etriques et cup-produits}

Soient $F,G,H$ dans la cat\'egorie des foncteurs contravariants de $\Sm(k)$ vers les groupes ab\'eliens et supposons que pour tous sch\'emas $X,Y\in \Sm(k)$, on ait un produit externe:
\begin{equation}\label{prod-ex}
F(X)\otimes G(Y) \to H(X\times Y)
\end{equation}
bifonctoriel en $(X,Y)$. D'o\`u un produit interne:
\[F(X)\otimes G(X) \to H(X\times X)\to H(X)\]
o\`u la derni\`ere fl\`eche est donn\'ee par la diagonale $X\to X\times X$.

Consid\'erons le diagramme commutatif de suites exactes:
\begin{displaymath}
\xymatrix{
F(X\times \A^1)\otimes G(Y) \ar[r]\ar[d]& F(X\times \G_m)\otimes G(Y) \ar[r]\ar[d]& F_{-1}(X)\otimes G(Y)\ar[r]\ar @{-->}[d]_{\exists}&0\\
H(X\times Y \times \A^1)\ar[r]& H(X\times Y \times \G_m) \ar[r]& H_{-1}(X\times Y)\ar[r]& 0
}
\end{displaymath}

On en d\'eduit un morphisme
\begin{equation}\label{prod-ex1}
F_{-1}(X) \otimes G(Y) \to H_{-1}(X\times Y),
\end{equation}
et de mani\`ere analogue
\begin{equation}\label{prod-ex2}
F(X) \otimes G_{-1}(Y) \to H_{-1}(X\times Y).
\end{equation}
Pour $Y=X$, on a
\[F(X)\otimes G_{-1}(X)\oplus F_{-1}(X) \otimes G(X) \to H_{-1}(X\times X) \to H_{-1}(X).\]
Consid\'erons encore le diagramme commutatif de suites exactes suivant:
\begin{displaymath}
\xymatrix{
F(X\times \A^1)\otimes G_{-1}(Y) \ar[r]\ar[d]& F(X\times \G_m)\otimes G_{-1}(Y) \ar[r]\ar[d]& F_{-1}(X)\otimes G_{-1}(Y)\ar[r]\ar @{-->}[d]_{\exists}&0\\
H_{-1}(X\times Y \times \A^1)\ar[r]& H_{-1}(X\times Y \times \G_m) \ar[r]& H_{-2}(X\times Y)\ar[r]& 0
}
\end{displaymath}

On en d\'eduit un morphisme
\begin{equation}\label{prod-ex3}
F_{-1}(X) \otimes G_{-1}(Y) \to H_{-2}(X\times Y).
\end{equation}

\begin{thm}\label{t.cup}
Supposons que $F,G,H$ se factorisent par $\DM$. Soient $x\in F(X\times \G_m)$ et $y\in G(X\times \G_m)$; notons $xy$ leur cup-produit dans $H(X\times \G_m)$. Alors on a
\begin{equation}\label{eq-res-cup}
\partial^H(xy)=\partial^F(x)s^G(y)+s^F(x)\partial^G(y)+\{-1\}\partial^F(x)\partial^G(y)
\end{equation}
o\`u on note $\partial^F\dots$ pour garder la trace de $F,G,H$.
\end{thm}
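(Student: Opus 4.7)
The plan is to reduce the identity \eqref{eq-res-cup} to the computation of the diagonal $\Delta_{\G_m}:\G_m\to \G_m\times\G_m$ already carried out in la proposition \ref{formule-res}. Let me put $Z=X\times X$, and note that the cup-product $xy\in H(X\times \G_m)$ factors as
\[F(X\times \G_m)\otimes G(X\times \G_m)\by{\eqref{prod-ex}} H(Z\times \G_m\times \G_m)\by{(1_Z\times\Delta_{\G_m})^*} H(Z\times \G_m) \by{\Delta_X^*} H(X\times \G_m),\]
and similarly $\partial^H(xy)$ is obtained by composing the above with $\partial^H$, which we may perform \emph{before} $\Delta_X^*$ thanks to the bifunctoriality of all products involved.

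First I would apply the lemme \ref{iso-res} to the foncteur $W\mapsto H(Z\times W)$ (and twice, with $W=\G_m$ and $W=\G_m\times \G_m$) to obtain canonical decompositions
\[H(Z\times \G_m)\simeq H(Z)\oplus H_{-1}(Z),\qquad H(Z\times \G_m\times \G_m)\simeq H(Z)\oplus H_{-1}(Z)^{\oplus 2}\oplus H_{-2}(Z).\]
Using bifunctoriality of \eqref{prod-ex} together with the constructions \eqref{prod-ex1}, \eqref{prod-ex2} and \eqref{prod-ex3} of induced external products, one identifies the image of $x\otimes y$ in the right-hand decomposition with the quadruple
\[\bigl(s^F(x)\boxtimes s^G(y),\ \partial^F(x)\boxtimes s^G(y),\ s^F(x)\boxtimes \partial^G(y),\ \partial^F(x)\boxtimes \partial^G(y)\bigr),\]
where $\boxtimes$ denotes the various external products into $H$, $H_{-1}$, $H_{-2}$ on $Z$.

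Next I would apply the proposition \ref{formule-res}: the second component of the image of the above quadruple under $(1_Z\times\Delta_{\G_m})^*$ in $H(Z\times \G_m)\simeq H(Z)\oplus H_{-1}(Z)$, namely the $H_{-1}(Z)$-component, equals
\[\partial^F(x)\boxtimes s^G(y)+s^F(x)\boxtimes \partial^G(y)+\{-1\}\cdot\bigl(\partial^F(x)\boxtimes \partial^G(y)\bigr).\]
By construction of $\partial^H$ (d\'efinition \ref{defn-Voe}), this $H_{-1}(Z)$-component is precisely the image of $(1_Z\times\Delta_{\G_m})^*(x\boxtimes y)$ under $\partial^H$. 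It remains to pull back by $\Delta_X$, which transforms each external product $\boxtimes$ into the corresponding internal product (through the second step of the definition of the internal product from \eqref{prod-ex}): this yields exactly \eqref{eq-res-cup}.

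The main obstacle, and the only place that requires careful checking, is the compatibility in step two: one must verify that the induced external products of the formulas \eqref{prod-ex1}--\eqref{prod-ex3}, defined by diagram chases from \eqref{prod-ex}, correspond component-by-component to the direct-sum decomposition provided by the lemme \ref{iso-res}. This is a matter of unwinding the definitions and using the bifunctoriality of \eqref{prod-ex} for both the $s^F,s^G$ sections and the $\partial^F,\partial^G$ residues; it is completely formal once one keeps track of which factor of $\G_m$ corresponds to which external coordinate, but it is the point at which one must be most attentive. The hypothesis that $F,G,H$ factor through $\DM$ enters only through proposition \ref{formule-res} (and the identification of $\{-1\}^*$ as a cup-product operator via the lemme \ref{surDMeffgm}); everything else in the argument is purely axiomatic.
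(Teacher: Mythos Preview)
Your proposal is correct and follows essentially the same route as the paper's own proof: factor the diagonal of $X\times\G_m$ as $(\Delta_X\times 1_{\G_m})\circ(1_{X\times X}\times\Delta_{\G_m})$, decompose $H(Z\times\G_m\times\G_m)$ via the lemme \ref{iso-res}, identify the components of $x\boxtimes y$ using the induced external products \eqref{prod-ex1}--\eqref{prod-ex3}, apply the matrix of proposition \ref{formule-res}, and finish by pulling back along $\Delta_X$. The paper makes the compatibility you flag as the ``main obstacle'' explicit by first writing down the two small commutative squares relating $\partial$ and $s$ to the external products, but otherwise the argument is the same.
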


\begin{proof}
Appliquant ce qui pr\'ec\`ede, on a des diagrammes commutatifs:
\begin{displaymath}
\xymatrix{
F_{-1}(X) \otimes G(Y\times \G_m) \ar[r]\ar[d]^{1\times \partial^G}& H_{-1}(X\times Y\times \G_m)\ar[d]^{\partial^{H_{-1}}}\\
F_{-1}(X) \otimes G_{-1}(Y) \ar[r]& H_{-2}(X\times Y)
}
\end{displaymath}
et
\begin{displaymath}
\xymatrix{
F_{-1}(X) \otimes G(Y\times \G_m) \ar[r]\ar[d]^{1\times s^G}& H_{-1}(X\times Y\times \G_m)\ar[d]^{s^{H_{-1}}}\\
F_{-1}(X) \otimes G(Y) \ar[r]& H_{-1}(X\times Y)
}
\end{displaymath}
o\`u $s^G,s^{H_{-1}}$ sont des sp\'ecialisations comme dans le lemme \ref{iso-res}.

Maintenant consid\'erons le diagramme commutatif:
\begin{displaymath}
\xymatrix{
F(X\times \G_m) \otimes G(Y\times \G_m) \ar[r]^{\sim \qquad}\ar[d] & (F(X)\oplus F_{-1}(X))\otimes (G(Y)\oplus G_{-1}(Y)) \ar[dd]\\
H(X\times \G_m \times Y\times \G_m) \ar[d]^{\wr} \\
H(X\times Y\times \G_m \times \G_m)\ar[d]^{(1_{X\times Y}\times \Delta)^*}\ar[r]^{\sim \qquad \qquad}& H(X\times Y)\oplus 2H_{-1}(X\times Y)\oplus H_{-2}(X\times Y)\ar[d]^{\tilde{\Delta}}\\
H(X\times Y\times \G_m)  \ar[r]^{\sim} & H(X\times Y)\oplus H_{-1}(X\times Y)
}
\end{displaymath}
o\`u la longue fl\`eche de droite est construite \`a partir de \eqref{prod-ex}, \eqref{prod-ex1}, \eqref{prod-ex2}, \eqref{prod-ex3}. D'o\`u le diagramme commutatif
\begin{displaymath}
\xymatrix{
F(X\times \G_m) \otimes G(Y\times \G_m)\ar[d]\ar[r]^{ \partial^F\times s^G+s^F\times \partial^G\quad}_{+\partial^F\times \partial^G\qquad} & \quad \qquad \text{$\begin{array}{cc}F_{-1}(X)\otimes G(Y)&\oplus  F(X)\otimes G_{-1}(Y)\\ \oplus  F_{-1}(X)\otimes G_{-1}(Y)\end{array}$}\ar[d]\\
H(X\times Y\times \G_m \times \G_m)\ar[d]^{(1_{X\times Y}\times \Delta)^*}\ar[r]& 2H_{-1}(X\times Y)\oplus H_{-2}(X\times Y)\ar[d]^{\tilde{\Delta}}\\
H(X\times Y\times \G_m)  \ar[r]^{\partial^H} & H_{-1}(X\times Y)
}
\end{displaymath}
Si $Y=X$, on a
\begin{displaymath}
\xymatrix{
F(X\times \G_m) \otimes G(X\times \G_m)\ar[d]\ar[r]^{ \partial^F\times s^G+s^F\times \partial^G\quad}_{+\partial^F\times \partial^G\qquad} & \quad \qquad \text{$\begin{array}{cc}F_{-1}(X)\otimes G(X)&\oplus  F(X)\otimes G_{-1}(Y)\\ \oplus  F_{-1}(X)\otimes G_{-1}(X)\end{array}$}\ar[d]\\
H(X\times X\times \G_m \times \G_m)\ar[d]^{(1_{X\times X}\times \Delta)^*}\ar[r]& 2H_{-1}(X\times X)\oplus H_{-2}(X\times X)\ar[d]^{\tilde{\Delta}}\\
H(X\times X\times \G_m)  \ar[r]^{\partial^H}\ar[d]^{(\Delta_X\times 1)^*} & H_{-1}(X\times X)\ar[d]^{(\Delta_X)^*}\\
H(X\times \G_m)  \ar[r]^{\partial^H} & H_{-1}(X)
}
\end{displaymath}
o\`u $\Delta_X$ est la diagonale $X\by{\Delta_X} X\times X$. Du diagramme ci-dessus et de la proposition \ref{formule-res}, on d\'eduit:
\[\partial^H(xy)=\partial^F(x)s^G(y)+s^F(x)\partial^G(y)+\{-1\}\partial^F(x)\partial^G(y).\]
\end{proof}

\begin{cor}
La formule \eqref{eq-res-cup} s'applique aussi aux r\'esidus $\partial_m$ de \eqref{residu} et $\partial_{D,g}$ de la d\'efinition \ref{residuBG}.
\end{cor}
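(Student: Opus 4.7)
L'id\'ee est de d\'eduire les deux cas du th\'eor\`eme \ref{t.cup} par fonctorialit\'e, apr\`es avoir identifi\'e les sections $s$ appropri\'ees.

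\textbf{Cas de $\partial_m$.} Par la construction \ref{c.univ}, $\partial_m$ se factorise comme
\[F(X\times B\mu_m)\by{\iota^*} F(X\times \G_m)\by{\partial^F} F_{-1}(X),\]
o\`u $\iota:X\times \G_m\to X\times B\mu_m$ est induit par la fl\`eche canonique $\G_m\to B_r\mu_m$ de $S_r^{-1}\Sm_\fl$ provenant du $\mu_m$-torseur $\G_m\to \G_m/\mu_m\iso\G_m$. De m\^eme, la section $s^F:F(X\times B\mu_m)\to F(X)$ associ\'ee \`a $\partial_m$ (correspondant \`a la pr\'ec-composition par le torseur neutre $X\to X\times B\mu_m\simeq X\times e_{\mu_m}$) se factorise comme $F(X\times B\mu_m)\by{\iota^*} F(X\times \G_m)\by{s^F}F(X)$, car la section unit\'e $1:X\to X\times \G_m$ compos\'ee avec $\iota$ est nulle dans $S_r^{-1}\Sm_\fl$. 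Comme $\iota^*$ est compatible aux cup-produits (par fonctorialit\'e des produits externes \eqref{prod-ex} sous les morphismes de la cat\'egorie de fractions), on a pour $x,y\in F(X\times B\mu_m)$
\[\partial_m(xy)=\partial^F(\iota^*(x)\iota^*(y)),\]
et le th\'eor\`eme \ref{t.cup} donne imm\'ediatement la formule souhait\'ee pour $\partial_m$.

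\textbf{Cas de $\partial_{D,g}$.} Par la d\'efinition \ref{residuBG}, $\partial_{D,g}$ est la composition
\[F(BG)\by{\phi^*} F(B(D\times \mu_m))\iso F(BD\times B\mu_m)\by{\partial_m} F_{-1}(BD).\]
La section $s^F$ associ\'ee \`a $\partial_{D,g}$ s'identifie \`a la compos\'ee $F(BG)\by{\phi^*} F(BD\times B\mu_m)\to F(BD)$, o\`u la seconde fl\`eche est la section de $\partial_m$ pour $X=BD$; c'est aussi la restriction $F(BG)\to F(BD)$ induite par l'inclusion $D\hookrightarrow G$ (c'est-\`a-dire le $\partial_{D,g}$ associ\'e \`a l'homomorphisme trivial). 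Puisque $\phi^*$ respecte les cup-produits et puisqu'on vient d'\'etablir la formule pour $\partial_m$ appliqu\'ee \`a $\phi^*(x)$ et $\phi^*(y)$, on obtient la formule pour $\partial_{D,g}$.

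\textbf{Obstacle principal.} La seule v\'erification d\'elicate, qu'on a plus ou moins pass\'ee sous silence dans le fil conducteur ci-dessus, est la compatibilit\'e des produits externes \eqref{prod-ex1}, \eqref{prod-ex2} et \eqref{prod-ex3} avec les pullbacks $\iota^*$ et $\phi^*$: il faut s'assurer que ces produits, construits via un diagramme de suites exactes, sont naturels en $X$ et $Y$ au sens requis, et que cette naturalit\'e passe \`a la cat\'egorie $S_r^{-1}\Sm_\fl$. Ces v\'erifications sont formelles mais m\'eritent d'\^etre explicit\'ees.
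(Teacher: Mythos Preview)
Your proof is correct and follows the same line as the paper's, which reads in its entirety: ``Cela r\'esulte imm\'ediatement de la d\'efinition de ces r\'esidus.'' Your argument is simply a detailed unpacking of that sentence: since $\partial_m$ (resp.\ $\partial_{D,g}$) is by construction a pullback along a map in $S_r^{-1}\Sm_\fl$ followed by the standard $\partial$ of the th\'eor\`eme~\ref{t.cup}, and pullbacks preserve cup-products, the formula transfers automatically with $s_m:=s\circ\iota^*$ (resp.\ the analogous composite). The ``obstacle'' you flag is not a genuine one: bifunctoriality of \eqref{prod-ex} in $(X,Y)$ is part of its hypothesis, and the induced products \eqref{prod-ex1}--\eqref{prod-ex3} inherit this naturality formally from the diagram chase that defines them; passage to $S_r^{-1}\Sm_\fl$ requires nothing further.
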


\begin{proof} Cela r\'esulte imm\'ediatement de la d\'efinition de ces r\'esidus.
\end{proof}

\begin{rem}
La formule \eqref{eq-res-cup} est analogue \`a celle de Rost \cite[\textbf{P3}, p. 331]{rost} (Un signe appara\^it en plus chez Rost parce que les modules de cycles sont gradu\'es).
\end{rem}
\section{Classes non ramifi\'ees sur un espace classifiant}\label{s9}


\subsection{Le foncteur $A^0_{\NR}(-,M_n)$} On se donne un module de cycles $M=(M_n)$.

\begin{defn}\label{gp-nr-gm} Soit $G\in \Grp$. On d\'efinit:  
\[A^0_{\NR}(BG,M_n)=\bigcap_{(D\subset G,\ g:\mu_m\to Z_G(D))}\Ker(A^0(BG,M_n)\by{\partial_{D,g}} A^0(BD,M_{n-1})),\]
o\`u $\partial_{D,g}$ est comme dans la d\'efinition \ref{residuBG} et $m$ parcourt les entiers $\ge 1$.
\end{defn}

\begin{prop}\label{foncteurNR}
La loi  $G\mapsto A^0_{\NR}(BG,M_n)$ d\'efinit un sous-foncteur de $G\mapsto A^0(BG,M_n)$ sur $\Grp$.
\end{prop}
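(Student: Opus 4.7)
The plan is to show that for any homomorphism $f\colon G\to H$ in $\Grp$, the induced pullback $f^*\colon A^0(BH,M_n)\to A^0(BG,M_n)$ (well-defined by the corollary \ref{coniveauA^p} and the proposition \ref{foncteurF(BG)}) sends $A^0_{\NR}(BH,M_n)$ into $A^0_{\NR}(BG,M_n)$. The strategy is to associate to any pair $(D,g)$ admissible for $G$ a ``pushforward'' pair $(D',g')$ admissible for $H$, and to show that the residues $\partial_{D,g}$ and $\partial_{D',g'}$ are intertwined by $f$.

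Concretely, given $D\subset G$ closed and $g\colon\mu_m\to Z_G(D)$, set $D':=f(D)\subset H$ (a closed subgroup, as images of linear algebraic group homomorphisms are closed) and $g':=f\circ g\colon\mu_m\to H$. Since $g(\mu_m)$ commutes with $D$ in $G$, its image under $f$ commutes with $D'$ in $H$, so $g'$ factors through $Z_H(D')$ and $(D',g')$ is a legitimate pair.

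The key step is to establish the commutative diagram
\[\begin{CD}
A^0(BH,M_n) @>f^*>> A^0(BG,M_n)\\
@V\partial_{D',g'}VV @VV\partial_{D,g}V\\
A^0(BD',M_{n-1}) @>(f|_D)^*>> A^0(BD,M_{n-1}).
\end{CD}\]
Unwinding the definition \ref{residuBG}, $\partial_{D,g}$ is the composition $\phi^*$ (with $\phi\colon D\times\mu_m\to G$, $(d,i)\mapsto d\cdot g(i)$) followed by the universal residue $\partial_m$ of the construction \ref{c.univ}. The identity $f\circ\phi=\phi'\circ(f|_D\times 1_{\mu_m})$, combined with the functoriality of $B_r$ (proposition \ref{foncteurF(BG)}) and of $F$, yields the commutativity of the $\phi^*$-half of the square. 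The remaining half reduces to the naturality of $\partial_m$ in the variable $X$: inspecting \eqref{residu}, $\partial_m$ is a composition of pullbacks whose only dependence on $X$ is via its first factor, so it commutes with any pullback along $X\to Y$, applied here to $f|_D\colon BD\to BD'$.

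With the square in hand the proof concludes at once: for $\alpha\in A^0_{\NR}(BH,M_n)$ one has $\partial_{D',g'}(\alpha)=0$, so $\partial_{D,g}(f^*\alpha)=(f|_D)^*\partial_{D',g'}(\alpha)=0$; since $(D,g)$ was arbitrary, $f^*\alpha\in A^0_{\NR}(BG,M_n)$. The only point that requires mild attention is to carry everything through the localized categories $S_r^{-1}\Sm_\fl$ and to check that increasing $r$ (so that all the $B_rG$, $B_rD$, etc.\ are simultaneously defined) causes no issue, but this is handled uniformly by the lemma \ref{lcompat} and the naturality of $B_r$ in $G$.
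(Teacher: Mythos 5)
Your proof is correct and follows essentially the same route as the paper: the pair $(D',g')=(f(D),f\circ g)$ is exactly the pair $(D_H,g_H)$ used there, and your commutative square is the paper's diagram, which the authors likewise justify by factoring both residues through $A^0(BD\times B\mu_m,M_n)\to A^0(BD\times\G_m,M_n)\to A^0(BD,M_{n-1})$ and invoking functoriality. No further comment is needed.
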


\begin{proof}
Soit $f:G\to H$ un homomorphisme de groupes: il s'agit de voir que
$f^*A^0_{\NR}(BH,M_n)\subset A^0_{\NR}(BG,M_n)$.

Si $D\subset G$ est un sous-groupe de $G$, posons $D_H=f(D)$: on a $f(Z_G(D))\subset  Z_{H}(D_H)$. Soit $g:\mu_m \to Z_G(D)$. La composition 
\[g_H: \mu_m \by{g} Z_G(D) \by{f} Z_H(D_H)\]
donne un diagramme commutatif
\[
\xymatrix{
A^0(BG,M_n) \ar[r] & A^0(BD\times B\mu_m,M_n)\ar[r]& A^0(BD\times \G_m)\ar[r]& A^0(BD,M_{n-1})\\
A^0(BH,M_n)\ar[r]\ar[u]&  A^0(BD_H\times B\mu_m,M_n)\ar[r]\ar[u]& A^0(BD_H\times \G_m)\ar[r]\ar[u]&A^0(BD_H,M_{n-1})\ar[u]
}
\]
soit
\[\begin{CD}
A^0(BG,M_n) @>\partial^G_{D,g}>> A^0(BD,M_{n-1})\\
@AAA @AAA\\
A^0(BH,M_n) @>\partial^H_{D_H,g_H}>> A^0(BD_H,M_{n-1})
\end{CD}\]
o\`u l'on a \'ecrit $\partial^G,\partial^H$ pour garder la trace de $G,H$. D'o\`u l'inclusion cherch\'ee.
\end{proof}

\subsection{Une formule simplifi\'ee pour $A^0_{\NR}(BG,M_n)$}

\begin{lemme}\label{simplifieG}
Soient $D'\subset D\subset G$ et $g:\mu_m\to Z_G(D)$; notons $g':\mu_m\to Z_G(D) \inj Z_G(D')$. Alors $\Ker \partial_{D,g}\subset \Ker \partial_{D',g'}$.
\end{lemme}

\begin{proof}
Cela r\'esulte du diagramme commutatif suivant
\begin{displaymath}
\xymatrix{
F(BG) \ar[r] \ar[d]^{=}& F(BD\times BI)\ar[r]\ar[d]& F(BD\times \G_m) \ar[r]\ar[d]& F_{-1}(BD)\ar[d]\\
F(BG) \ar[r] & F(BD'\times BI)\ar[r]& F(BD'\times \G_m) \ar[r]& F_{-1}(BD').
}
\end{displaymath}
\end{proof}

Du lemme \ref{simplifieG}, on d\'eduit:

\begin{prop}\label{gpnr-simple}
On a
\[A^0_{\NR}(BG,M_n)=\bigcap_{g:\ \mu_m \to G} \Ker(\partial_{Z_G(g),g})\]
o\`u $Z_G(g)$ est le centralisateur de $g(\mu_m)\subset G$.
\end{prop}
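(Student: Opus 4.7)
The plan is to deduce the proposition directly from Lemma \ref{simplifieG}, which has been established just before. The key observation is reformulating the admissibility condition on pairs $(D,g)$. In the definition \ref{gp-nr-gm} of $A^0_{\NR}(BG,M_n)$, the intersection runs over pairs $(D,g)$ where $D\subset G$ is a subgroup and $g:\mu_m\to Z_G(D)$ is a homomorphism (for some $m$). The condition that $g$ factors through $Z_G(D)$ is equivalent to saying $g(\mu_m)$ commutes elementwise with $D$, i.e.\ $D\subset Z_G(g(\mu_m))=Z_G(g)$. So for a fixed homomorphism $g:\mu_m\to G$, the admissible subgroups $D$ are exactly those contained in $Z_G(g)$, and the \emph{maximal} such $D$ is $Z_G(g)$ itself.

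The inclusion $\subset$ is immediate from the definition: if $\alpha\in A^0_{\NR}(BG,M_n)$ lies in $\Ker \partial_{D,g}$ for every admissible pair, then in particular it lies in $\Ker \partial_{Z_G(g),g}$ for every $g:\mu_m\to G$, since $(Z_G(g),g)$ is itself an admissible pair.

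For the reverse inclusion $\supset$, let $\alpha$ be an element of the right-hand side, and let $(D,g)$ be an arbitrary admissible pair. Set $D'=D$ and take the ``big'' subgroup in Lemma \ref{simplifieG} to be $Z_G(g)$: since $D\subset Z_G(g)$ and $g:\mu_m\to Z_G(Z_G(g))\supset \mu_m\to Z_G(g)$ lands in $Z_G(D)$ by composition with the inclusion $Z_G(Z_G(g))\hookrightarrow Z_G(D)$, Lemma \ref{simplifieG} applies and yields $\Ker \partial_{Z_G(g),g}\subset \Ker \partial_{D,g}$. Thus $\partial_{D,g}(\alpha)=0$. Since $(D,g)$ was arbitrary, $\alpha\in A^0_{\NR}(BG,M_n)$. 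There is no real obstacle here; the proposition is essentially a book-keeping statement, and the only genuine input is the monotonicity of the kernels already recorded in Lemma \ref{simplifieG}.
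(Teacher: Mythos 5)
Your proof is correct and follows exactly the route the paper intends: the paper's own "proof" consists of the single remark that the proposition follows from Lemma \ref{simplifieG}, and you have correctly supplied the missing book-keeping, namely that admissibility of $(D,g)$ means $D\subset Z_G(g)$, that $(Z_G(g),g)$ is itself admissible since $g(\mu_m)\subset Z_G(Z_G(g))$, and that the lemma applied to $D\subset Z_G(g)$ gives $\Ker\partial_{Z_G(g),g}\subset\Ker\partial_{D,g}$. Nothing further is needed.
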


\subsection{Une majoration de la cohomologie non ramifi\'ee} Rappelons de \cite[\S 6]{BG1} le groupe $A^0_{\nr}(BG,M_n) \subset A^0(BG,M_n) $ des classes non ramifiées.

\begin{prop}\label{prop-cohnr}
On a l'inclusion
\[A^0_{\nr}(BG,M_n) \subset A^0_{\NR}(BG,M_n).\]
\end{prop}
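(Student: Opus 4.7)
La strat\'egie est de ramener chaque r\'esidu g\'eom\'etrique $\partial_{D,g}$ \`a un r\'esidu ordinaire en une valuation discr\`ete de type g\'eom\'etrique, sur lequel la condition non ramifi\'ee entra\^\i ne automatiquement l'annulation.

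On commencera par \'etablir que $G\mapsto A^0_\nr(BG,M_n)$ est un sous-foncteur de $G\mapsto A^0(BG,M_n)$ sur $\Grp$: ceci r\'esulte de la proposition \ref{A^0_nr} (caract\`ere homotopique et pur en coniveau $\ge 1$, permettant la factorisation par $S_1^{-1}\Sm_\fl$) jointe \`a la proposition \ref{foncteurF(BG)}, l'inclusion $A^0_\nr\hookrightarrow A^0$ \'etant une transformation naturelle de foncteurs sur $\Sm_\fl$. Soient alors $\alpha\in A^0_\nr(BG,M_n)$ et $(D,g)$ un couple avec $g:\mu_m\to Z_G(D)$. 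On forme l'homomorphisme $\phi:D\times\mu_m\to G$ d\'efini par $(d,i)\mapsto d\cdot g(i)$ (bien d\'efini puisque l'image de $g$ centralise $D$). Par la fonctorialit\'e ci-dessus, $\phi^*\alpha\in A^0_\nr(BD\times B\mu_m,M_n)$, et par la d\'efinition \ref{residuBG},
\[\partial_{D,g}(\alpha)=\partial_m(\phi^*\alpha)\in A^0(BD,M_{n-1}).\]

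En choisissant un mod\`ele $Y$ de $BD$, on voit $\partial_{D,g}(\alpha)$ comme un \'el\'ement de $M_{n-1}(k(Y))$ via l'injection naturelle. Le lemme \ref{residuP}, appliqu\'e avec $X=Y$, identifie cette image au r\'esidu $\partial_{A_\chi}$ de l'image de $\phi^*\alpha$ dans $M_n(k(Y\times\chi)^{\mu_m})$, o\`u $A_\chi$ est une valuation discr\`ete de type g\'eom\'etrique explicite. Mais $\phi^*\alpha$ \'etant non ramifi\'ee au sens de Colliot-Th\'el\`ene--Ojanguren, son image dans $M_n(k(Y\times\chi)^{\mu_m})$ a tous ses r\'esidus aux valuations discr\`etes de type g\'eom\'etrique nuls, et en particulier \`a $A_\chi$. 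L'inclusion $A^0(Y,M_{n-1})\hookrightarrow M_{n-1}(k(Y))$ \'etant injective, $\partial_{D,g}(\alpha)=0$. Ceci valant pour tout couple $(D,g)$, $\alpha\in A^0_\NR(BG,M_n)$. L'obstacle principal est la v\'erification soigneuse de la fonctorialit\'e en $G$ de la premi\`ere \'etape (notamment le fait que les morphismes plats intervenant dans la construction de $B\phi$ respectent bien le sous-groupe $A^0_\nr$), le reste se r\'eduisant \`a une application directe du lemme \ref{residuP}.
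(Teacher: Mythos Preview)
Your proof is correct and follows essentially the same approach as the paper's: reduce via the functoriality of $A^0_\nr$ along $\phi:D\times\mu_m\to G$ (the paper invokes \S\ref{s7.4} for this, which is exactly the subfunctor statement you spell out), then apply the lemma \ref{residuP} with $X$ a model of $BD$ to identify $\partial_m$ with an ordinary residue $\partial_{A_\chi}$ at a geometric discrete valuation, on which the unramified condition forces vanishing. The paper phrases the reduction as ``on peut donc supposer que $G=D\times I$'' and chooses the model via a representation $W'\oplus\chi$, but the argument is the same as yours.
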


\begin{proof} Choisissons une repr\'esentation tr\`es fid\`ele $W$ de $G$ \cite[déf. 2.7]{BG1}. 
Soit $x$ un \'el\'ement de $A^0_{\nr}(k(W)^G,M_n)\subset A^0(BG,M_n)$ \ie $\partial_A(x)=0$ pour tout $A\in \sP(k(W)^G/k)$. Rempla\c cons la notation $\partial_{D,g}$ (d\'ef. \ref{residuBG}) par $\partial^G_{D,g}$, pour garder la trace du groupe $G$. On veut montrer que pour tout le couple $(D,g)$, on a $\partial^G_{D,g}(x)=0$. 

Pour simplifier, posons $I=\mu_m$. Soit $\phi:D\times I \to G$ le morphisme d\'efini par $\phi(d,i)=d.g(i)$. D'apr\`es la d\'efinition des r\'esidus g\'eom\'etriques (d\'ef. \ref{residuBG}), $\partial^G_{D,g}$ se factorise par $\partial^{D\times I}_{D,g}$; autrement dit, on a un  diagramme commutatif:
\[
\xymatrix{
A^0(BG,M_n) \ar[d]^{\phi^*}\ar[dr]^{\partial^G_{D,g}}\\
A^0(BD\times BI,M_n)\ar[r]^{\partial^{D\times I}_{D,g}} & A^0(BD,M_{n-1}).
}
\]

Mais  $\phi^*A^0_\nr(BG,M_n)\subset A^0_\nr(BD\times BI,M_n)$ \cite[\S 6.4]{BG1}. On peut donc supposer que $G=D\times I$. 

On raisonne comme Peyre dans \cite[p. 207]{peyre1}:  choisissons $W$ de la forme $W'\times W_\chi$, o\`u $W'$ est une repr\'esentation tr\`es fid\`ele de $D$ et $W_\chi$ est la repr\'esentation fid\`ele canonique de dimension 1 de $I$. Le diagramme commutatif du  lemme \ref{residuP}:
\[
\xymatrix{
A^0(BD \times BI,M_n) \ar[r]^{\partial^{D\times I}_{D,g}}\ar @{^(->}[d]& A^0(BD, M_{n-1})\ar @{^(->}[d]\\
M_n(k(W'\oplus W_\chi)^{D\times I}) \ar[r]^{\partial_{A_{\chi}}\qquad }& M_{n-1}(k(W')^D)=M_{n-1}(\kappa_{A_{\chi}})
}
\]
o\`u $A_{\chi}\in \sP(k(W'\oplus W_\chi)^{D\times I}/k)$, montre imm\'ediatement que $\partial^{D\times I}_{D,g}A^0_\nr(BD\times BI,M_n)=0$. 
\end{proof}

\begin{rem} Pour $G$ quelconque, il est tr\`es improbable qu'on ait \'egalit\'e dans la proposition \ref{prop-cohnr}: les r\'esidus g\'eom\'etriques $\partial_{D,g}$ ne semblent pas suffisants. Le r\'esultat principal de cet article est qu'on a \'egalit\'e quand $G$ est fini constant et que $k$ contient assez de racines de l'unit\'e. C'est l'objet de la section suivante.
\end{rem}

\section{Le th\'eor\`eme principal}\label{th.princ}
\`A partir de maintenant, on suppose que $G$ est un groupe fini constant d'exposant $m$, o\`u $m$ est un entier inversible dans $k$, et que   $\mu_m\subset k$. 
Le but de cette section est de d\'emontrer le th\'eor\`eme suivant:

\begin{thm}\label{thmprincipal} Sous les hypoth\`esess ci-dessus, on a:
\[A^0_{\NR}(BG,M_n)=A^0_{\nr}(BG,M_n)\]
(\'egalit\'e dans $A^0(BG,M_n)$).
\end{thm}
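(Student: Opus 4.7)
By Proposition \ref{prop-cohnr}, only the inclusion $A^0_{\NR}(BG,M_n) \subset A^0_{\nr}(BG,M_n)$ requires proof. The plan is as follows. Fix a very faithful representation $W$ of $G$, set $L := k(W)$, $K := L^G$, and take $x \in A^0_{\NR}(BG,M_n) \subset M_n(K)$; it suffices to show $\partial_v(x) = 0$ in $M_{n-1}(\kappa_v)$ for every divisorial valuation $v$ of $K/k$.

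The first step is Galois-theoretic. Extend $v$ to a valuation $w$ of $L$, with decomposition group $D \subset G$ and inertia group $I \lhd D$. Since $|G|$ is invertible in $k$, the valuation $w$ is tame: $I$ is cyclic of some order $e$ dividing $m$, and the canonical character
\[\chi_w : I \iso \mu_e, \qquad \sigma \mapsto \sigma(\pi)/\pi \pmod{w}\]
(for a uniformiser $\pi$ of $w$) is $D$-equivariant with respect to conjugation on $I$ and the action of $D/I = \Gal(\kappa_w/\kappa_v)$ on $\mu_e \subset \kappa_w^*$. The chain of inclusions $\mu_e \subset \mu_m \subset k \subset \kappa_v$ makes this Galois action trivial; hence $I$ is central in $D$, and in particular $I \subset Z_G(D)$. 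Composing $\chi_w^{-1}$ with the canonical surjection $\mu_m \twoheadrightarrow \mu_e$ then produces a homomorphism $g : \mu_m \to Z_G(D)$ of image $I$.

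The second and decisive step is to compare $\partial_v(x)$ with $\partial_{D,g}(x)$, the latter vanishing by the $\NR$-hypothesis. Centrality of $I$ in $D$ makes $\phi : D \times \mu_m \to G$, $(d, i) \mapsto d \cdot g(i)$, a group homomorphism, and Definition \ref{residuBG} writes
\[\partial_{D,g}(x) = \partial_m(\phi^*(x)),\]
where $\partial_m$ is the universal residue on $A^0(BD \times B\mu_m, M_n)$ of \eqref{residu}. By Lemma \ref{residuP}, $\partial_m(\phi^*(x))$ is computed algebraically by the residue $\partial_{A_\chi}$ at the Kummer-standard valuation $A_\chi$ (associated to the divisor $T = 0$ of the $\chi$-coordinate) on a field of the form $k(W' \oplus \chi)^{D \times \mu_m}$, where $W'$ is a very faithful $D$-representation and $\chi$ is the standard representation of $\mu_m$. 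The remaining task is to identify $v$ with $A_\chi$ up to stable birational equivalence: after replacing $W$ by a suitable $G$-enlargement not changing $A^0_\nr(BG,M_n)$ (permitted by Corollary \ref{c:st} and Theorem \ref{t:fischer}), one constructs an equivariant \'etale-local model of a neighbourhood of $w$ of the form $U \times \chi$, with $D$ acting on $U$ through $D/I$ and $I = \mu_e$ acting on the $\chi$-coordinate $T$ by $\chi_w$; in the corresponding quotient, the divisor underlying $v$ is cut out by $T = 0$, so $v$ coincides with $A_\chi$. This yields $\partial_v(x) = \partial_{A_\chi}(\phi^*(x)) = \partial_{D,g}(x) = 0$.

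The main obstacle is this final identification of $v$ with $A_\chi$ via the equivariant Kummer local model. It uses essentially the centrality of $I$ in $D$ obtained in the first step: without it, $\phi$ would fail to be a group homomorphism and no clean description of the local picture at $v$ as a $(D \times \mu_m)$-quotient of a Kummer-standard situation would exist. This centrality, supplied uniformly over all divisorial valuations by the hypothesis $\mu_m \subset k$, is precisely what allows the geometric residues $\partial_{D,g}$ to suffice in the present setting, as foreshadowed in the introduction.
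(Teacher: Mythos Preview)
Your opening steps are correct and match the paper: the reduction via Proposition~\ref{prop-cohnr}, the extension of $v$ to $w$ with decomposition group $D$ and inertia $I$, the centrality of $I$ in $D$ (Remark~\ref{remI}), and the construction of $g:\mu_m\to Z_G(D)$. You also correctly identify that the task is to relate $\partial_v(x)$ to $\partial_{D,g}(x)$, the latter being computed by Lemma~\ref{residuP} as a residue $\partial_{A_\chi}$ on a standard Kummer model.

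The gap is in your final step. The phrase ``identify $v$ with $A_\chi$ up to stable birational equivalence'' hides the real difficulty: $v$ and $A_\chi$ live on different fields with unrelated residue fields ($\kappa_v$ is an \emph{a priori} arbitrary function field, while $\kappa_{A_\chi}=k(W')^D$), so no direct identification is possible. What the paper does instead is introduce an intermediate valuation $A'$ on the larger field $k(W\oplus W'\oplus\chi)^{D\times I}$, unramified over $v$ (Lemma~\ref{indice-e}), into whose residue field both $\kappa_v$ and $\kappa_{A_\chi}$ map compatibly with the various residues; one then concludes $\partial_v(x)=0$ \emph{provided} the map $M_{n-1}(\kappa_v)\to M_{n-1}(\kappa_{A'})$ is injective, i.e.\ provided $\kappa_{A'}/\kappa_v$ is unirational. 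Now $\kappa_{A'}=\kappa_w(\overline{W'},t)^D$, where $D$ acts on $\kappa_w$ Galois-wise through $D/I$ and linearly on $\overline{W'}$: this is a \emph{twisted} Noether problem over $\kappa_v$, obstructed by the Brauer class in $\Br(\kappa_v)$ attached to the central extension $1\to I\to D\to D/I\to 1$. Your proposed ``\'etale-local model $U\times\chi$ with $D$ acting on $U$ through $D/I$'' presupposes exactly that this obstruction vanishes --- a splitting of the $D$-action into a $D/I$-part and an $I$-part is precisely what a nonzero obstruction forbids. The paper proves the vanishing via a theorem of Saltman (Lemmas~\ref{ext-corps2}, \ref{ext-corps}, \ref{brauer-propriete}): the completed extension $\widehat{L_w}/\widehat{K_v}$ solves the embedding problem for $D\to D/I$, forcing the class to die already in $\Br(\kappa_v)$; one then concludes stable rationality by the twisted no-name lemma (Lemma~\ref{sans-nom-tordu}). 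This Saltman input is the technical heart of the argument, and it is absent from your sketch.
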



\subsection{Un sous-groupe interm\'ediaire} 

D'apr\`es la proposition \ref{prop-cohnr}, il suffit de d\'emontrer que $A^0_{\NR}(BG,M_n)\subset A^0_{\nr}(BG,M_n)$. Pour cela, on va d\'efinir un autre sous-groupe de $A^0(BG,M_n)$ contenant $A^0_{\NR}(BG,M_n)$ de la mani\`ere suivante.

 Soit $W$ une repr\'esentation lin\'eaire fid\`ele de $G$. Soient $A\in \sP(k(W)^G/k)$ et $B\in \sP(k(W)/k)$ au dessus de $A$. Soient $D$ le groupe de d\'ecomposition de $B$ dans $G$ et $I$ son groupe d'inertie.

\begin{rem}\label{remI}
Comme l'exposant de $G$ divise $m$, l'ordre de $G$ divise une puissance de $m$ et donc $|G|$ est inversible dans $k$. D'apr\`es \cite[IV, \S 2, cor. 2, cor. 3]{serre}, $I$ est cyclique, canoniquement isomorphe \`a $\mu_q$ avec $q|m$, et \emph{central} dans $D$.
\end{rem}

\begin{defn}\label{gp-nr-sp}
On d\'efinit:  
\[A^0_{\NR,sp}(k(W)^G,M_n)=\bigcap_{(D\subset G,\ g:I\to Z_G(D))}\Ker(A^0(BG,M_n)\by{\partial_{D,g}} A^0(BD,M_{n-1})),\]
o\`u l'intersection porte sur l'ensemble des sous groupes $D,I$ relatifs \`a $A\in \sP(k(W)^G/k)$ comme ci-dessus et $\partial_{D,g}$ est comme dans la d\'efinition \ref{residuBG}.
\end{defn}

Il est clair que
\[A^0_{\NR}(BG,M_n) \subset A^0_{\NR,sp}(k(W)^G,M_n).\]
On va montrer dans les num\'eros suivants:

\begin{thm} \label{cohnr-gm}
Soient $B,A$ comme ci-dessus, et soient $D$ et $I$ les sous-groupes de d\'ecomposition et d'inertie de $B$. Consid\'erons les r\'esidus:
\begin{align*}
A^0(BG,M_n) &\by{\partial_{D,g}}  A^0(BD,M_{n-1}) \ \text{(\cf d\'ef. \ref{residuBG})}\\
\intertext{et}
M_n(k(W)^G) &\by{\partial_A} M_{n-1}(\kappa_A)
\end{align*}
o\`u $\kappa_A$ est le corps r\'esiduel de $A$. Si $x\in A^0(BG,M_n)$ est tel que $\partial_{D,g}(x)=0$, alors $\partial_A(x)=0$. Par cons\'equent,
\[A^0_{\NR,sp}(k(W)^G,M_n) \subset A^0_{\nr}(BG,M_n)\ \text{(\cf d\'ef. \ref{gp-nr-sp} et \cite[d\'ef. 6.1]{BG1})}.\]
\end{thm}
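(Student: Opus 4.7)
La strat\'egie est de ramener le th\'eor\`eme \`a la comparaison entre r\'esidu g\'eom\'etrique et r\'esidu classique fournie par le lemme \ref{residuP}. Par d\'efinition, $\partial_{D,g}(x)=\partial_m(\phi^*(x))$ o\`u $\phi:D\times\mu_q\to G$ envoie $(d,i)$ sur $d\cdot g(i)$ et o\`u l'on prend $m=q=|I|$ (justifi\'e par la remarque apr\`es la d\'efinition \ref{residuBG}). D'apr\`es le lemme \ref{residuP}, le r\'esidu universel $\partial_m$ sur $A^0(X\times B\mu_q,M_n)$ co\"\i ncide, apr\`es l'injection canonique dans $M_n(k(X\times\chi)^{\mu_q})$, avec le r\'esidu classique $\partial_{A_\chi}$, o\`u $X$ est un mod\`ele birationnel de $BD$ et $A_\chi$ est la trace de la valuation $T=0$. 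L'id\'ee est donc de construire un mod\`ele birationnel $(D\times\mu_q)$-\'equivariant de $k(W)^G$ dans lequel $A$ s'identifie \`a $A_\chi$.

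La construction cl\'e consiste \`a choisir une uniformisante $\pi$ de $B$ dans $k(W)$ sur laquelle l'inertie $I=\mu_q$ agit par son caract\`ere canonique, c'est-\`a-dire telle que $i(\pi)=\chi_I(i)\pi$ pour tout $i\in I$. Comme $\mu_q\subset k$ et que $|I|$ est inversible dans $k$, un tel $\pi$ s'obtient en moyennant une uniformisante arbitraire $\pi_0$ par les caract\`eres de $I$: $\pi=\sum_{i\in I}\chi_I(i)^{-1}i(\pi_0)$. La centralit\'e de $I$ dans $D$ (remarque \ref{remI}) entra\^\i ne alors que pour tout $d\in D$, $d(\pi)$ est encore un $\chi_I$-vecteur propre pour $I$, de sorte que $u_d:=d(\pi)/\pi$ appartient \`a $k(W)^I$ et d\'efinit un $1$-cocycle de $D$ \`a valeurs dans $(k(W)^I)^*$, trivial sur $I$ modulo les scalaires $\chi_I$. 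Par Hilbert 90 appliqu\'e \`a l'extension galoisienne $k(W)^I/k(W)^D$ (non ramifi\'ee en $B^I/B^D$), ce cocycle est birationnellement un cobord modulo $\chi_I$, ce qui permet d'identifier birationnellement $k\pi$ \`a la repr\'esentation canonique $\chi$ de $\mu_q$ sur un mod\`ele de $BD$. La valuation $B$ s'y identifie au diviseur $T=0$, et sa trace $A$ sur $k(W)^G$ \`a $A_\chi$, compte tenu de ce que $k(W)^D/k(W)^G$ est non ramifi\'ee en $B^D/A$.

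L'obstacle principal r\'eside dans la coh\'erence de cette trivialisation birationnelle vis-\`a-vis de l'action de $D$ et de la descente aux $G$-invariants: il faut s'assurer que l'inclusion $A^0(BG,M_n)\inj M_n(k(W)^G)$ se factorise \`a travers l'inclusion analogue pour $B(D\times\mu_q)$ de mani\`ere \`a ce que les r\'esidus $\partial_A$ et $\partial_{A_\chi}$ se correspondent sur l'image de $x$. C'est ici qu'interviennent de fa\c con essentielle la centralit\'e de $I$ dans $D$, qui garantit l'existence du cocycle $u_d$, et l'hypoth\`ese $\mu_m\subset k$ (avec $q\mid m$), qui donne acc\`es \`a Hilbert 90 pour trivialiser ce cocycle. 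Une fois cette comparaison birationnelle \'etablie, le lemme \ref{residuP} entra\^\i ne l'\'egalit\'e $\partial_A(x)=\partial_{D,g}(x)$ modulo les identifications ci-dessus, d'o\`u la conclusion $\partial_A(x)=0$ sous l'hypoth\`ese $\partial_{D,g}(x)=0$.
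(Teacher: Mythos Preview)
Your approach has a genuine gap in the Hilbert 90 step. The cocycle $u_d=d(\pi)/\pi$ is indeed a $1$-cocycle of $D$ with values in $(k(W)^I)^*$ (centrality of $I$ gives $I$-invariance of $u_d$), but its class in $H^1(D,(k(W)^I)^*)$ is \emph{non-trivial}: via the inflation--restriction sequence for $1\to I\to D\to D/I\to 1$, Hilbert 90 for $k(W)^I/k(W)^D$ kills inflation, so restriction to $I$ is injective, and $u|_I=\chi_I\ne 1$. The phrase ``cobord modulo $\chi_I$'' would require extending $\chi_I$ to a character of $D$, which is obstructed precisely by the image of the class of the central extension $1\to I\to D\to D/I\to 1$ in $H^2(D/I,k^*)\subset\Br(k(W)^D)$. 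This obstruction does not vanish in general, so there is no birational identification of $(k(W)^D,\pi)$ with $(X\times\chi,T)$ for a model $X$ of $BD$, and hence no direct identification of $A$ with $A_\chi$.

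The paper does not attempt such an identification. Instead it enlarges the picture: one adjoins an auxiliary faithful $D$-representation $W'$ (a sum of regular representations) together with the line $\chi$, extends $B$ to a valuation $B'$ on $k(W\oplus W'\oplus\chi)$ with decomposition group $D\times I$ and inertia $1\times I$, and sets $A'=B'\cap k(W\oplus W'\oplus\chi)^{D\times I}$. The key facts are that $e(A'|A)=1$, so $\partial_A$ and $\partial_{A'}$ are compatible, and that the residue extension $\kappa_{A'}/\kappa_A$ is \emph{stably rational}. The latter is where the Brauer obstruction above is actually discharged: by Saltman's description of twisted multiplicative invariants and the lemma that the extension class dies in $\Br(\kappa_A)$ (coming from the complete local situation), $\kappa_B(\overline{W''})^D$ is rational over $\kappa_A$ for $W''=\Ind_I^D\chi$, and the ``twisted no-name lemma'' passes this to $\kappa_{A'}$. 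Injectivity of $M_{n-1}(\kappa_A)\to M_{n-1}(\kappa_{A'})$ then follows from homotopy invariance, and a diagram chase through the enlarged picture gives $\partial_A(x)=0$. Your sketch misses exactly this Brauer/Saltman input, which is the substantive content of the argument.
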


D'o\`u on d\'eduit le th\'eor\`eme principal \ref{thmprincipal} et un peu plus pr\'ecis\'ement: 
\begin{cor}\label{egalite}
\[A^0_{\NR}(BG,M_n) = A^0_{\NR,sp}(k(W)^G,M_n)=A^0_{\nr}(BG,M_n).\]
\end{cor}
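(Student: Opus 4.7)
Le plan est de d\'eduire le corollaire comme cons\'equence formelle de la proposition \ref{prop-cohnr} et du th\'eor\`eme \ref{cohnr-gm}, en \'etablissant un cycle d'inclusions entre les trois sous-groupes consid\'er\'es.

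D'abord, je commencerai par observer l'inclusion imm\'ediate
\[A^0_{\NR}(BG,M_n)\subset A^0_{\NR,sp}(k(W)^G,M_n).\]
Ceci r\'esulte directement des d\'efinitions \ref{gp-nr-gm} et \ref{gp-nr-sp}: le sous-groupe $A^0_{\NR,sp}$ est d\'efini comme l'intersection de noyaux parcourant la sous-famille (plus petite) des couples $(D,g)$ provenant de groupes de d\'ecomposition et d'inertie associ\'es \`a des valuations $A\in \sP(k(W)^G/k)$, tandis que $A^0_{\NR}$ intersecte sur tous les couples $(D,g)$ arbitraires (avec $g:\mu_m\to Z_G(D)$, $m$ \'etant l'exposant de $G$). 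Une intersection prise sur une famille plus restreinte donne un sur-groupe.

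Ensuite, la proposition \ref{prop-cohnr} fournit d\'ej\`a
\[A^0_{\nr}(BG,M_n)\subset A^0_{\NR}(BG,M_n),\]
et le th\'eor\`eme \ref{cohnr-gm} donnera l'inclusion compl\'ementaire
\[A^0_{\NR,sp}(k(W)^G,M_n)\subset A^0_{\nr}(BG,M_n).\]
Concat\'enant ces trois inclusions, on obtient la cha\^\i ne
\[A^0_{\nr}(BG,M_n)\subset A^0_{\NR}(BG,M_n)\subset A^0_{\NR,sp}(k(W)^G,M_n)\subset A^0_{\nr}(BG,M_n),\]
ce qui force l'\'egalit\'e des trois sous-groupes et d\'emontre aussi en passant le th\'eor\`eme \ref{thmprincipal}.

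Toute la difficult\'e est bien s\^ur concentr\'ee dans le th\'eor\`eme \ref{cohnr-gm}, qui repose de mani\`ere cruciale sur la structure tr\`es particuli\`ere du groupe d'inertie signal\'ee \`a la remarque \ref{remI}: $I$ est cyclique, canoniquement isomorphe \`a un $\mu_q$ avec $q\mid m$, et \emph{central} dans le groupe de d\'ecomposition $D$. C'est cette propri\'et\'e de centralit\'e qui permet de voir $g:I\to Z_G(D)$ comme un homomorphisme de $\mu_m$ dans le centralisateur $Z_G(D)$ et donc d'appliquer les r\'esidus g\'eom\'etriques; elle d\'epend de fa\c con essentielle des hypoth\`eses $|G|$ inversible dans $k$ et $\mu_m\subset k$. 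Une fois le th\'eor\`eme \ref{cohnr-gm} admis, en revanche, la d\'emonstration du corollaire \ref{egalite} est purement formelle et n'ajoute rien de nouveau.
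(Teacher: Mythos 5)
Votre démonstration est correcte et suit exactement la même voie que l'article : l'inclusion $A^0_{\NR}\subset A^0_{\NR,sp}$ est immédiate par définition (intersection sur une sous-famille de couples $(D,g)$), et la chaîne se referme par la proposition \ref{prop-cohnr} et le théorème \ref{cohnr-gm}. Rien à redire.
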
 

De plus, comme $A^0_{\nr}(BG,M_n)$ et $A^0_{\NR}(BG,M_n)$ sont des foncteurs contravariants en $G$ (\cf \cite[\S 5.5]{BG1} et proposition \ref{foncteurNR}), on obtient:

\begin{cor}
On a aussi un r\'esultat \'equivalent au th\'eor\`eme \ref{thmprincipal} pour la partie r\'eduite (\cf déf. \ref{tildeF(BG)}):
\[\tilde{A}^0_{\NR}(BG,M_n)=\tilde{A}^0_{\nr}(BG,M_n).\]
\end{cor}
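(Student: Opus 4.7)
The plan is to deduce this corollary essentially formally from Theorem \ref{thmprincipal}, by invoking the naturality in $G$ of the two subfunctors involved. Concretely, I would first observe that both $G\mapsto A^0_{\nr}(BG,M_n)$ and $G\mapsto A^0_{\NR}(BG,M_n)$ are subfunctors of $G\mapsto A^0(BG,M_n)$ on $\Grp$: for the former this is the content of Lemma \ref{prop-foncteur} together with the discussion of \S \ref{s7.4}, and for the latter it is exactly Proposition \ref{foncteurNR}.

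Next, I would recall that the idempotent $\epsilon:G\to \Spec k\to G$ of Example \ref{ex2.2} a) induces, by functoriality, an idempotent endomorphism $A^0(B\epsilon)$ of $A^0(BG,M_n)$, and that (by Definition \ref{tildeF(BG)}) the reduced subgroup $\tilde A^0(BG,M_n)$ is the kernel of this idempotent. The two subfunctors $A^0_{\nr}$ and $A^0_{\NR}$ are each stable under $A^0(B\epsilon)$, so we can compute
\[
\tilde A^0_{\nr}(BG,M_n)=A^0_{\nr}(BG,M_n)\cap \Ker A^0(B\epsilon),
\]
\[
\tilde A^0_{\NR}(BG,M_n)=A^0_{\NR}(BG,M_n)\cap \Ker A^0(B\epsilon),
\]
as subgroups of $A^0(BG,M_n)$.

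Finally, Theorem \ref{thmprincipal} (applied under the standing hypotheses that $G$ is finite constant of exponent $m$, that $\car k$ is prime to $m$, and that $\mu_m\subset k$) identifies $A^0_{\nr}(BG,M_n)$ with $A^0_{\NR}(BG,M_n)$ as subgroups of $A^0(BG,M_n)$. Intersecting both of these equal subgroups with the same $\Ker A^0(B\epsilon)$ then yields the desired equality $\tilde A^0_{\NR}(BG,M_n)=\tilde A^0_{\nr}(BG,M_n)$. There is essentially no obstacle here; the only small point to verify carefully is that the idempotent $A^0(B\epsilon)$ really does preserve both subfunctors, which follows directly from their functoriality in $G$ established earlier.
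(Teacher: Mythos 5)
Your proof is correct and takes essentially the same route as the paper, which deduces the corollary directly from the contravariant functoriality in $G$ of $A^0_{\nr}(B-,M_n)$ and $A^0_{\NR}(B-,M_n)$ (lemme \ref{prop-foncteur} avec \S \ref{s7.4}, et proposition \ref{foncteurNR}). Your explicit description of the reduced parts as intersections with $\Ker A^0(B\epsilon)$ merely spells out what the paper leaves implicit; the stability of the two subfunctors under this idempotent is indeed automatic from their being subfunctors.
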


\subsection{Lemmes}

Dans cette partie, on garde les hypoth\`eses de la d\'efinition \ref{gp-nr-gm} et les notations pr\'ec\'edentes. 

Pour montrer le th\'eor\`eme \ref{cohnr-gm}, on a besoin des lemmes suivants. Les deux premiers reformulent certains r\'esultats de Saltman \cite{saltman}:

\begin{lemme}\label{ext-corps2}
Soit $k$ un corps contenant le groupe $\mu_q$ des racines de l'unit\'e o\`u $q$ est inversible dans $k$. Soient $G$ un groupe fini et $N$ un groupe cyclique d'ordre $q$. Soit $G'$ une extension centrale de $G$ par $N$. Donnons-nous un $2$-cocycle normalis\'e $c$ de $G$ \`a valeurs dans $N$ d\'efinissant l'extension $G'$, associ\'e \`a une section ensembliste $s$ de la projection $\pi:G'\to G$ (v\'erifiant $s(1)=1$).\\
Soit $k'/k$ une extension de groupe de Galois $G$. Soit $\chi:N \iso \mu_q$ un caract\`ere fid\`ele de $N$ sur $k$. Notons $W_{\chi}$ la $k$-repr\'esentation de dimension un correspondante. Soit $W$ la repr\'esentation de $G'$ induite de $W_{\chi}$. Alors $W$ est fid\`ele et on a un isomorphisme
\[k'(W)/k'(W)^{G'}\iso \Frac(S'(k'/k))/\Frac(R(k'/k))\]
o\`u $G'$ op\`ere sur $k'(W)$ par son action sur $k'$ (via $G$) et sur $W$, et $S'(k'/k),R(k'/k)$ sont associ\'ees \`a  $c$ comme dans Saltman \cite[p. 74, 75]{saltman}. 
\end{lemme}

Rappelons d'abord la d\'efinition de l'extension $S'(k'/k)/R(k'/k)$ de \cite[pp. 74, 75]{saltman}. Dans la situation de l'\'enonc\'e:
\begin{displaymath}
\xymatrix{
1 \ar[r]& N \ar[r]& G'\ar[r]_{\pi} &G \ar[r] \ar @/_/[l]_{s}& 1
}
\end{displaymath}
le $2$-cocycle $c$ est d\'efini par la relation:
\[s(g)s(h)=s(gh)c(g,h). \]

Soit $g'\in G'$: on \'ecrit $g'=n(g')s\pi(g')$ avec $n(g')\in N\subset Z(G')$.

\begin{itemize}
	\item $S''(k'/k):=k'[y(g)\mid g\in G](1/s)$ o\`u $s=\prod_{g\in G} y(g)$. Et $G$ op\`ere sur $S''(k'/k)$ par son action sur $k'$ et par
	\[gy(h)=y(gh)\ \forall g\in G.\]
	\item  $S(k'/k):=S''(k'/k)[x(g)\mid 1\neq g \in G]/(x(g)^q=y(g)/y(1))$. 
	Notons $x(1)=1$. Alors l'action de $G$ sur $S(k'/k)$ \'etend celle sur $S''(k'/k)$ via
	\[gx(h)=[x(gh)/x(h)]\chi(c(g,h)).\]
	\item  $S'(k'/k):=S(k'/k)[\gamma]/(\gamma^q=y(1))$. Donc $N$ op\`ere sur $S'(k'/k)$ par l'action triviale sur $S(k'/k)$ et par
	\[n\gamma=\chi(n)\gamma.\]
	Alors $G'$ op\`ere sur $S'(k'/k)$ via $N$ et $G$.
	\item  $R(k'/k):=S(k'/k)^G$.
\end{itemize}
 
Le diagramme suivant r\'esume ce qui pr\'ec\`ede:
	\begin{displaymath}
\xymatrix{
&& S'(k'/k)\\
k' \ar[r]&S''(k'/k) \ar[r]& S(k'/k)\ar[u]_N\\
k\ar[u]_{G}\ar[rr]&& R(k'/k).\ar[u]_G
}
\end{displaymath}
 

\begin{proof}[D\'emonstration du lemme \protect{\ref{ext-corps2}}]
D'apr\`es la d\'efinition des repr\'esentations induites, on a \[W=k[G']\otimes_{k[N]} W_{\chi}\] de base $\{s(g)\otimes w|g\in G\}$ o\`u $w$ est une base de $W_{\chi}$. L'action de $G'$ sur $W$ est: 
\begin{align}
g'\in G',\ 	g'(s(g)\otimes w)&= g's(g) \otimes w \nonumber\\
	                 &= n(g')s\pi(g')s(g) \otimes w \nonumber\\
	                 &= n(g')s(\pi(g')g)c(\pi(g'),g) \otimes w \nonumber \\
	                 &= s(\pi(g')g) \otimes n(g')c(\pi(g'),g).w \nonumber \\
	                 &= s(\pi(g')g) \otimes \chi(n(g')c(\pi(g'),g))w \nonumber \\
	                 &= \chi(n(g')c(\pi(g'),g)) s(\pi(g')g) \otimes w. \nonumber
\end{align}
En particulier:
\begin{enumerate}
	\item [-]Si $n\in N$, on a:
\[n(s(g) \otimes w)=\chi(n)s(g)\otimes w.\]

  \item [-]Si $h\in G$, on a:
  \[s(h)(s(g)\otimes w)=\chi(c(h,g))s(hg)\otimes w.\]
\end{enumerate}

Montrons que $\Ind^{G'}_{N}\chi$ est une repr\'esentation fid\`ele de $G'$. Pour $g'\in G'$ et $x=\sum_{g\in G}\lambda_g s(g)\otimes w \in W,\ \lambda_g\in k$, on a:
\begin{align}
	g'x=x \iff &g'\sum_{g\in G}\lambda_g s(g)\otimes w=\sum_{g\in G}\lambda_g s(g)\otimes w\nonumber\\
	\iff &\sum_{g\in G}\lambda_g g's(g)\otimes w=\sum_{g\in G}\lambda_g s(g)\otimes w\nonumber\\
	\iff &\sum_{g\in G}\lambda_g \chi(n(g')c(\pi(g'),g)) s(\pi(g')g)\otimes w=\sum_{g\in G}\lambda_{g} s(g)\otimes w\nonumber\\
	\iff &\sum_{g\in G}\lambda_g \chi(n(g')c(\pi(g'),g)) s(\pi(g')g)\otimes w=\sum_{g\in G}\lambda_{\pi(g')g} s(\pi(g')g)\otimes w\nonumber\\
	\iff &\lambda_g \chi(n(g')c(\pi(g'),g)) =\lambda_{\pi(g')g} \ \forall g\in G.\nonumber
\end{align}
Si $g'=n(g')\in N-\{1\}$, alors $\chi(n(g'))\neq 1$ et donc 
\[g'x=x \Leftrightarrow \lambda_g \chi(n(g')) =\lambda_{g} \ \forall g\in G \Leftrightarrow \lambda_g=0\ \forall g\in G\Leftrightarrow x=0.\]
Si $g'=n(g')s\pi(g')\in G'-N$, \ie $\pi(g')\neq 1$, soit $m$ le nombre d'\'el\'ements de $G$, on a:
\[g'x=x \Leftrightarrow \lambda_g \chi(n(g')c(\pi(g'),g)) =\lambda_{\pi(g')g} \ \forall g\in G.   \]
Ici on a $m$ variables $\lambda_g$ mais il y a au maximum $m-1$ \'equations ind\'ependantes. Donc on peut trouver des $\lambda_g$ hors des solutions \ie des $\lambda_g$ tels que $g'x\neq x$.
 
Posons \[\gamma=1\otimes w\ \text{et}\ x(g)=\dfrac{s(g)\otimes w}{1\otimes w}\in k(W)\ \forall g\neq 1.\]

Si $h\in G$, on a
\begin{align}
 s(h)x(g)&=\dfrac{s(h)s(g)\otimes w}{s(h) \otimes w}\nonumber\\	
                 &=\dfrac{\chi(c(h,g))s(hg) \otimes w}{s(h)\otimes w} \nonumber\\
                 &=\chi(c(h,g))x(hg)/x(h). \nonumber
\end{align}

Si $n\in N$, on a
\begin{align}                 
 nx(g) &=\dfrac{\chi(nc(1,g))s(g)\otimes w}{\chi(nc(1,1))1 \otimes w} \nonumber\\
               &=x(g)\ \text{car}\ c(1,1)=c(1,g)=c(g,1)=1.\nonumber                 
\end{align}
Ainsi, $N$ op\`ere trivialement sur $\{x(g)|g\in G\}$. \\

Pour $n\in N$, $n\gamma  =\chi(n)\gamma$.

Pour $h\in G$,
\begin{align}
	 s(h)\gamma &=s(h)\otimes w \nonumber\\
	                    &=(1\otimes w)\dfrac{s(h)\otimes w}{1\otimes w}=\gamma x(h). \nonumber 
\end{align}

Ainsi, on a les m\^emes g\'en\'erateurs et relations que chez Saltman, et donc
\[k'(W)/k'(W)^{G'}\iso \Frac(S'(k'/k))/\Frac(R(k'/k)).\]
\end{proof}

\begin{rem}
Dans la d\'emonstration ci-dessus, si on note $W^{g'}=\{x\in W|g'x=x\}$ (pour $g'\in G'-N$), alors $\dim W^{g'}=[G:\left\langle \pi(g')\right\rangle]$. Donc \[\nu(W)=1\Leftrightarrow \exists g':|G|-[G:\left\langle \pi(g')\right\rangle]=1 \Leftrightarrow |G|=|\left\langle \pi(g')\right\rangle|=2\]
(\cf \cite[rem. 2.8]{BG1}).
\end{rem}

\begin{lemme}\label{ext-corps} Avec les notations du lemme pr\'ec\'edent, on a un $k$-isomorphisme
\[k'(W)^{G'}\simeq k(\sA)(x)\]
o\`u $x$ est une ind\'etermin\'ee, $\sA$ est la $k$-alg\`ebre centrale simple d\'efinie par le $2$-cocycle $c$ et $k(\sA)$ est son corps de d\'eploiement g\'en\'erique (corps des fonctions de la vari\'et\'e de Severi-Brauer de $\sA$).
\end{lemme}

\begin{proof} Cela r\'esulte du lemme \ref{ext-corps2} et de \cite[thm. 1.5]{saltman}.
\end{proof}

\begin{lemme}\label{brauer-propriete}
Soit $K$ un corps complet pour une valuation discr\`ete $v$ de rang un. Soit $A$ l'anneau de valuation de $v$. On suppose qu'on est dans la situation suivante:
\[K\by{G} K_\nr \by{N} K'\]
o\`u $K'/K$ est galoisienne de groupe $G'$, d'inertie $N$. Soit $B\subset K_\nr$ la clôture intégrale de $A$. Soient $\kappa_A,\kappa_B$ des corps r\'esiduels de $A,B$. Supposons que $q$ soit inversible dans $\kappa_A$. Alors\\
a) L'image de $[G']\in H^2(G,N)\allowbreak=H^2(\kappa_B/\kappa_A,N)$ est triviale dans le groupe de Brauer $\Br(\kappa_A)$.\\
b) L'extension $\kappa_B(W)^{G'}/\kappa_A$ est rationnelle, o\`u $W$ est la repr\'esentation du lemme \ref{ext-corps2}.
\end{lemme}

\begin{proof}
a) Consid\'erons le diagramme suivant:
\begin{displaymath}
\xymatrix{
[G']\in  H^2(G,N) \ar[r] \ar[d] & H^2(G,\kappa_B^*) \ar @{^{(}->}[r]\ar @{-->}[dl]^{\phi} & \Br(\kappa_A)\\
 H^2(G,K^*_\nr) \ar @{^{(}->}[d]\\
 \Br(K).
}
\end{displaymath}
Il existe un morphisme $\phi:H^2(G,\kappa_B^*)\to H^2(G,K_\nr^*)$ faisant commuter le triangle, et il est injectif (\cf \cite[pp. 192--194]{serre}). En effet, on a la suite exacte:
\[1 \to U_{K_\nr} \to K_\nr^* \by{v} \Z \to 0\]
scind\'ee par le choix d'une uniformisante de $K$. On en d\'eduit une suite exacte scind\'ee:
\[0\to H^2(G,U_{K_\nr}) \to H^2(G,K_\nr^*) \to H^2(G,\Z) \to 0.\]
De plus, on a une autre suite exacte:
\[1 \to U_{K_\nr}^1 \to U_{K_\nr} \to \kappa_B^* \to 1\]
o\`u $U_{K_\nr}^1$ est le sous-groupe de $U_{K_\nr}$ form\'e des $a\in U_{K_\nr}$ tels que $v(1-a)\geq 1$. On a $H^q(G,U^1_{K_\nr})=0$ pour tout $q\geq 1$ \cite[lemme 2, p. 193]{serre}. Donc 
\[H^2(G,\kappa_B^*) \stackrel{\sim}{\leftarrow} H^2(G,U_{K_\nr}) \inj H^2(G,K_\nr^*).\]

Comme $K'/K$ est une extension de groupe de Galois $G'$ induite par $K_\nr/K$ de groupe $G$ (``the embedding problem"), d'apr\`es \cite[prop. 1.1]{saltman}, l'image de $[G']$ dans $H^2(G,K_\nr^*)$ est triviale. Elle est donc triviale dans $H^2(G,\kappa_B^*)\inj \Br(\kappa_A)$.

b) Cela r\'esulte de a) et du lemme \ref{ext-corps}, puisqu'avec les notations de ce lemme on a $[\sA]=[G']\in \Br(\kappa_A)$.
\end{proof}

\begin{lemme}[``Lemme sans nom tordu''] \label{sans-nom-tordu} 
Soient $G,N,G'$ com\-me dans le lemme \ref{ext-corps}. Soit $k'/k$ une extension de groupe de Galois $G$. Soient $W,W'$ deux repr\'esentations fid\`eles de $G'$ sur $k$.  Alors $k'(W)^{G'}$ et $k'(W')^{G'}$ sont stablement \'equivalents sur $k$. Si l'un est pur, l'autre est stablement pur.
\end{lemme}

\begin{proof}
Soit $U$ un ouvert de $W$ tel que $U$ soit un $G'$-torseur (\cf \cite[rem. 2.8]{BG1}). Alors $U_{k'}=U\times_k \Spec(k')$ est encore un $G'$-torseur (puisque $\Spec(k')$ est un $k$-sch\'ema affine!  \cite[VIII, cor. 7.9]{SGA1} ou \cite[chap. 1, th. 2.23]{milne}). 
 Donc $U_{k'}\times_{k} W'/G'$ est un fibr\'e vectoriel sur $U_{k'}/G'$. Et donc $k'(W\oplus W')^{G'}=k'(U_{k'}\times W')^{G'}$ est transcendant pur sur $k'(W)^{G'}$. On raisonne de m\^eme avec $W'$.
 \end{proof}

\subsection{D\'emonstration du th\'eor\`eme \ref{cohnr-gm}}

D'apr\`es la remarque \ref{remI}, $I$ est cyclique ($I\iso \mu_q$ o\`u $q|m$) et central dans $D$. 
Rappelons aussi que
\[I=\Ker (D \to Gal(\kappa_B/\kappa_A)).\]
 Soit $W'$ une $k$-repr\'esentation fid\`ele de $D$, qui est somme directe d'au moins deux repr\'esentations r\'eguli\`eres de $D$ (\cf \cite[rem. 2.8 et \S 5.5]{BG1}). Soit \[\chi:I\iso \mu_q\inj k^*\] un caract\`ere fid\`ele de $I$ (\cf \cite[d\'em. prop. 3, p. 207]{peyre1}). Soit $\pi$ une uniformisante de $B$ telle que $\pi^q \in k(W)^I$ (un tel $\pi$ existe, \cf \cite[chap. II, prop. 12]{lang}). Alors, $k(W)=k(W)^I[\pi]$ et
\begin{equation}\label{uniformisante}
\sigma \pi = \chi(\sigma)\pi
\end{equation}
pour $\sigma\in I$, parce que $\mu_q\subset k(W)^I$ et donc l'extension $k(W)/k(W)^I$ est kummerienne. 
 
On note $\phi:D\times I \to G$ le morphisme d\'efini par $(d,i)\mapsto di$. Alors $D\times I$ op\`ere sur $W$ via $\phi$.
 
Consid\'erons le diagramme suivant:
\begin{displaymath}
\xymatrix{
\overline{k(W)} \ar[r] & \overline{k(W\oplus W'\oplus \chi)} & \overline{k(W'\oplus \chi)}\ar[l]\\
k(W) \ar[r] \ar[u]& k(W\oplus W'\oplus \chi)\ar[u]& k(W'\oplus \chi) \ar[l]\ar[u]\\
k(W)^G \ar[r] \ar[u]_{G}& k(W\oplus W'\oplus \chi)^{D\times I} \ar[u]_{D\times I}& k(W'\oplus \chi)^{D\times I}\ar[l] \ar[u]_{D\times I}
}
\end{displaymath}
o\`u $D\times I$ op\`ere sur $W\oplus W'\oplus \chi$ par l'action de $D\times I$ sur $W$, l'action de $D$ sur $W'$ et l'action de $I$ sur $\chi$. 

Soit $(X_1,\dots, X_s)$ une base de $W'$. D'apr\`es \cite[lemme 1, p. 156]{bourbaki}, il existe une unique valuation discr\`ete de rang un $w_1$ prolongeant $v_B$ dans $k(W)(X_1)$, telle que 
\[w_1(P)=w_1(\sum_ja_jX_1^j)=\inf_j \{v_B(a_j)+j\xi \}\] 
o\`u $a_j\in k(W)$ et $\xi \in \Z$. On choisit $\xi=0$ et donc $w_1(X_1)=0$. 
De plus, d'apr\`es \cite[prop. 2, p. 157]{bourbaki}, le corps r\'esiduel $\kappa_{w_1}$ de $w_1$ est transcendant pur sur $\kappa_{B}=\kappa_{v_B}$ et plus pr\'ecis\'ement $\kappa_{w_1}=\kappa_{B}(t_1)$ o\`u $t_1$ est l'image de $X_1$ dans $\kappa_{w_1}$. Par r\'ecurrence, il existe une unique valuation discr\`ete $w$ prolongeant $v_B$ dans $k(W\oplus W')$ telle que 
\begin{gather*}
w(\sum_J a_JX^J)=\inf_J\{v_B(a_J)\}\\
\intertext{o\`u}
J=(j_1,\dots,j_s),X^J=X_1^{j_1}\dots X_s^{j_s},a_J\in k(W).
\end{gather*}

Donc $w(X_i)=0,i=1,\dots,s$ pour $X_i\in W'$ et $\kappa_w=\kappa_B(t_1,\dots,t_s)$ o\`u $t_i$ est l'image de $X_i$ dans $\kappa_w$ pour tout $i$. Notons la formule:
\begin{equation} \label{equation1}
w(\lambda_1X_1+\dots +\lambda_sX_s)=0 \quad \text{si}\ k^s\ni (\lambda_1,\dots,\lambda_s)\neq (0,\dots,0).
\end{equation}
Enfin, il existe une unique valuation discr\`ete de rang un $v_{B'}$ prolongeant $w$ donc prolongeant $v_B$ dans $k(W\oplus W'\oplus \chi)=k(W\oplus W')(T)$ telle que 
\[v_{B'}(\sum_{J,l} a_{J,l}X^{J}T^l)=\inf_l\{w(\sum_{J}a_{J,l}X^{J})+l\}=\inf_{J,l}\{v_B(a_{J,l})+l\},\ a_{J,l}\in k(W).\]

Donc $v_{B'}(T)=1$ (on choisit $\xi=1$). Et donc $v_{B'}(T/\pi)=0$ o\`u $\pi$ engendre l'id\'eal maximal $m_B$ de $B$. D'o\`u $\kappa_{B'}=\kappa_{v_{B'}}=\kappa_B(t_1,\dots,t_s,t)$ o\`u $t$ est l'image de $T/\pi$ dans $\kappa_{B'}$. Ainsi l'anneau $B'$ de $v_{B'}$ prolonge $B$ dans $k(W\oplus W'\oplus \chi)$. Posons $A'=B'\cap k(W\oplus W'\oplus \chi)^{D\times I}$. 

Comme $W'$ est une somme de repr\'esentations r\'eguli\`eres de $D$, on peut choisir les $X_i$ ci-dessus de telle sorte qu'ils soient permut\'es par $D$, soit $gX_i=X_{g(i)}$ pour tout $i$. C'est ce que nous faisons dans le lemme qui suit.

\enlargethispage*{20pt}

\begin{lemme}\label{gpdeB'}
Le groupe de d\'ecomposition de $B'$ dans $D\times I$ est $D\times I$ et son groupe d'inertie est $1\times I$. 
\end{lemme}

\begin{proof}
D'abord on va montrer que $(D\times 1)B'= B'$. Soit $g\in D$. Comme $D$ est le groupe de d\'ecomposition de $B$, on a $gB=B$. Par le choix des $X_i$, on a $gX^J=g(X_1^{j_1}\dots X_s^{j_s})=X^{gJ}$. Alors
\begin{align*}
v_{gB'}(\sum_{J,l}a_{J,l}X^JT^l)&=v_{B'}(g(\sum_{J,l}a_{J,l}X^JT^l))=v_{B'}(\sum_{J,l}(ga_{J,l})X^{gJ}T^l) \\
	                                &=\inf_{J,l}\{v_B(ga_{J,l})+l\}=\inf_{J,l}\{v_{gB}(a_{J,l})+l\}\\
	                                &=\inf_{J,l}\{v_B(a_{J,l})+l\}=v_{B'}(\sum_{J,l}a_{J,l}X^JT^l).
\end{align*}
Gr\^ace \`a l'unicit\'e de $B'$, on a $gB'=B'$ pour tout $g\in D$, donc $(D\times 1)B'=B'$. Et on a aussi  $W'\subset B'$. Gr\^ace \`a l'\'equation \eqref{equation1}, on a $v_{B'}(\sum \lambda_i X_i)=0$ avec $\lambda_i \in k$ et donc $W'\inj \kappa_{B'}$. Posons $\overline{W'}=\IM(W')$, c'est le sous-espace vectoriel de $\kappa_{B'}$ de base $t_1,\dots,t_s$ et $D$ op\`ere librement sur $\overline{W'}$. Notons que $W'\to \overline{W'}$ est un ismorphisme $D\times I$-\'equivariant de $k$-espaces vectoriels.

Comme $I\subset D$ et $I=\left\langle \sigma \right\rangle$ op\`ere sur $T$ par $\sigma T=\chi(\sigma)T$ o\`u $\chi(\sigma)\in k^*$, on a de mani\`ere analogue 
\begin{align}
v_{\sigma B'}(\sum_{J,l}a_{J,l}X^JT^l)&=v_{B'}(\sigma(\sum_{J,l}a_{J,l}X^JT^l))=v_{B'}(\sum_{J,l}(\chi(\sigma)(\sigma a_{J,l}))X^{J}T^l)\nonumber \\
	                                &=\inf_{J,l}\{v_B(\chi(\sigma)(\sigma a_{J,l}))+l\}=\inf_{J,l}\{v_{\sigma B}(a_{J,l})+l\}\nonumber\\
	                                &=\inf_{J,l}\{v_B(a_{J,l})+l\}=v_{B'}(\sum_{J,l}a_{J,l}X^JT^l).\nonumber
\end{align}

Ainsi $(1\times I)B'= B'$. En conclusion, $(D\times I)B'=B'$. 
 
D'autre part, $I$ op\`ere trivialement sur $\kappa_B$ \`a cause de la d\'efinition de $I$ et $D$ op\`ere librement sur $\{t_1,\dots,t_s\}$. De plus,
\[\sigma t=\sigma (T/\pi)=\chi(\sigma)T/\chi(\sigma)\pi=t\]
o\`u $\pi$ est choisi comme en \eqref{uniformisante}. Ainsi $1\times I$ op\`ere trivialement sur le corps $\kappa_B(t_1,\dots,t_s,t)$.
\end{proof}

Soient $B_{\chi}$ la restriction de $B'$ \`a $k(W'\oplus \chi)=k(W')(T)$ et $v_{B_{\chi}}$ sa valuation associ\'ee. Alors $v_{B_{\chi}}$ est nulle sur $k(W')$ et $v_{B_{\chi}}(T)=1$ (donc $B_\chi$ est l'anneau que Peyre a consid\'er\'e dans \cite[p. 207]{peyre1}). Posons $A_{\chi}=B_{\chi}\cap k(W'+\chi)^{D\times I}$. On a aussi que le groupe de d\'ecomposition de $B_{\chi}$ est $D\times I$ et son groupe d'inertie est $1\times I$. On a des diagrammes:
\[
\xymatrix{
B \ar[r] & B'& B_{\chi} \ar[l]\\
A \ar[r] \ar[u]& A' \ar[u]& A_{\chi}\ar[l] \ar[u]
}
\qquad
\xymatrix{
\kappa_B \ar[r] & \kappa_{B'}& \kappa_{B_{\chi}} \ar[l]\\
\kappa_A \ar[r] \ar[u]^{D/I}& \kappa_{A'} \ar[u]_{D}& \kappa_{A_{\chi}}.\ar[l] \ar[u]_{D}
}
\]

\begin{lemme}\label{indice-e}
L'indice de ramification  de $A'|A$ est $1$.
\end{lemme}

\begin{proof}
D'abord, d'apr\`es notre construction, l'indice de ramification de $B|A$ est $e_{B|A}=|I|=q$ et celui de $B'|B$ est $e_{B'|B}=v_{B'}(\pi_B)=1$ o\`u $\pi_B$ est une uniformisante de $B$. D'apr\`es le lemme \ref{gpdeB'}, $e_{B'|A'}=|1\times I|=|I|=q$. Enfin, gr\^ace \`a la relation:
\[e_{B'|A}=e_{B'|B}e_{B|A}=e_{B'|A'}e_{A'|A},\]
 on en d\'eduit $e_{A'|A}=1$.
\end{proof}

On en d\'eduit un diagramme commutatif:
\begin{displaymath}
\xymatrix{
A^0(BG,M_n) \ar[r] \ar @{^{(}->}[d]& A^0(BD\times BI,M_n) \ar[rr]^{\partial_{D\times I,g}}\ar @{^{(}->}[d]\ar @{^{(}->}[d]\ar @{^{(}->}[rd]&& A^0(BD,M_{n-1})\ar @{^{(}->}[d]\\
M_n(k(W)^G)\ar[r] \ar[d]^{\partial_A} & M_n(k(W\oplus W'\oplus \chi)^{D\times I})\ar[d]^{\partial_{A'}}& M_n(k(W'\oplus \chi)^{D\times I}) \ar[r]^{\partial_{A_{\chi}}}\ar[d]^{\partial_{A_{\chi}}}\ar[l] & M_{n-1}(k(W')^D) \ar[ld]^{=}\\
 M_{n-1}(\kappa_A)\ar[r]^\alpha & M_{n-1}(\kappa_{A'}) & M_{n-1}(\kappa_{A_{\chi}}).\ar[l]_{e_{A'|A_{\chi}}\cdot }
}
\end{displaymath}

En effet, le rectangle sup\'erieur gauche et le triangle sup\'erieur central commutent  par d\'efinition du foncteur $A^0(-,M_n)$ et des morphismes $A^p(X,M_n) \to A^p(Y,M_n)$ pour $Y\to X$ \cite[\S 12]{rost}. Le trap\`eze sup\'erieur droit commute gr\^ace au lemme \ref{residuP}. 
Enfin, la commutativit\'e des rectangles inf\'erieurs r\'esulte du lemme \ref{indice-e} et de l'axiome (R3a) de \cite{rost}.

Pour conclure, il suffit de montrer que la fl\`eche $\alpha$ est injective,  et pour cela, il suffit de voir que l'extension $\kappa_{A'}/\kappa_A$ est unirationnelle \cite[lemme 5.3]{BG1}. 

 Soit $W''=\Ind^D_I\chi$ (vue comme $k$-repr\'esentation). 
 Comme $\chi$ est un facteur direct de la repr\'esentation r\'eguli\`ere de $I$, $W''$ est un facteur direct de la repr\'esentation r\'eguli\`ere de $D$, donc $W''\subset W'$. D'apr\`es le lemme \ref{ext-corps2}, $W''$ est fid\`ele. On note $\overline{W''}$ l'image de $W''$ dans $\kappa_{B'}$. 

D'apr\`es le lemme \ref{brauer-propriete} b), $\kappa_B(\overline{W''},t)^D=\kappa_B(\overline{W''})^D(t)$ est transcendant pur sur $\kappa_A$. D'apr\`es le lemme \ref{sans-nom-tordu}, $\kappa_{A'}=\kappa_B(\overline{W'},t)^D$ et $\kappa_B(\overline{W''},t)^D$ sont stablement \'equivalents sur $\kappa_A$ et finalement $\kappa_{A'}$ est stablement pur sur $\kappa_A$. 

\begin{rem}
La repr\'esentation $W''$ intervenant \`a la fin de la d\'emonstration ci-dessus provient du lemme \ref{ext-corps}; elle joue un r\^ole cl\'e dans la d\'emonstration. En principe, on aurait pu utiliser $W''\oplus W''$ \`a la place de $W'$ ci-dessus mais cela rendrait la v\'erification du lemme \ref{gpdeB'} plus d\'elicate. Pour cette raison, nous avons pr\'ef\'er\'e proc\'eder indirectement en passant par la repr\'esentation r\'eguli\`ere de $D$.
\end{rem}
\bigskip

\subsection{Un raffinement du th\'eor\`eme principal}

\begin{defn}\label{gpnab}
Soit $G$ $k$-groupe alg\'ebrique lin\'eaire. On d\'efinit
\[A^0_{\nab}(BG,M_n)=\bigcap_A \Ker(A^0(BG,M_n)\to  A^0(BA,M_n))\]
o\`u $A$ parcourt les sous-groupes ab\'eliens ferm\'es de $G$, de type multiplicatif d\'eploy\'e.
\end{defn}

Notons que $A^0_{\nab}(BG,M_n)\subset \tilde A^0(BG,M_n)$ (consid\'erer $A=1$ dans la d\'efinition \ref{gpnab}).

\begin{lemme}\label{l-graceF}
Avec les notations de la d\'efinition \ref{gpnab}, on a
\[\tilde A^0_{\nr}(BG,M_n)\subset A^0_{\nab}(BG,M_n).\]
\end{lemme}

\begin{proof}
C'est \'evident par fonctorialit\'e, puisque $\tilde A^0_{\nr}(BA,M_n)\allowbreak=0$ pour tout $A$ de type multiplicatif d\'eploy\'e \cite[th. 2.29 et cor. 6.5]{BG1}. 
\end{proof}

\begin{lemme}[``Lemme de Bogomolov'']\label{l-genB} Supposons $G$ fini cons\-tant et d'exposant $m$, avec $\mu_m\subset k$. 
Pour tout $D\subset G$ et $g:I=\mu_m \to Z_G(D)$, on a
\[\partial_{D,g}(A^0_{\nab}(BG,M_n))\subset A^0_{\nab}(BD,M_{n-1}).\]
\end{lemme}

\begin{proof}
Soit $A_D$ un sous-groupe ab\'elien de $D$. Posant $A=\left\langle A_D,g(\mu_m)\right\rangle$ (qui est ab\'elien!), on a le diagramme commutatif suivant
\[\begin{CD}
A^0(BG,M_n) @>>> A^0(BA,M_n)\\
@VVV @VVV\\
A^0(BD \times BI,M_n) @>>> A^0(BA_D\times BI,M_n)\\
@VVV @VVV\\
A^0(BD,M_{n-1}) @>>> A^0(BA_D,M_{n-1}).
\end{CD}\]

D'o\`u l'\'enonc\'e.
\end{proof}

\begin{cor}\label{cor-raffine}
On a la suite exacte suivante
\begin{equation}\label{se-raffine}
0\to \tilde A^0_{\nr}(BG,M_n) \to A^0_{\nab}(BG,M_n) \by{\partial_{D,g}} \bigoplus_{D,g} A^0_{\nab}(BD,M_{n-1}).
\end{equation}
\end{cor}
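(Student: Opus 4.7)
The plan is to assemble the three ingredients already established—Theorem~\ref{thmprincipal}, Lemma~\ref{l-graceF}, and Lemma~\ref{l-genB}—so that the sequence \eqref{se-raffine} appears as a natural refinement of the sequence of Theorem~\ref{t1} obtained by intersecting with the subgroups $A^0_{\nab}$.

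First I would verify that the sequence is well-defined, i.e.\ that the residues $\partial_{D,g}$ send $A^0_{\nab}(BG,M_n)$ into $A^0_{\nab}(BD,M_{n-1})$: this is exactly Lemma~\ref{l-genB}. Next, I would check that the first map $\tilde A^0_\nr(BG,M_n)\to A^0_{\nab}(BG,M_n)$ is well-defined and that the composition with $(\partial_{D,g})$ vanishes. The inclusion $\tilde A^0_\nr(BG,M_n)\subset A^0_{\nab}(BG,M_n)$ is Lemma~\ref{l-graceF}. The vanishing of $\partial_{D,g}$ on $\tilde A^0_\nr(BG,M_n)\subset A^0_\nr(BG,M_n)$ follows from Theorem~\ref{thmprincipal}, which identifies $A^0_\nr(BG,M_n)$ with $A^0_{\NR}(BG,M_n)$, the very subgroup defined as the intersection of the kernels of all the $\partial_{D,g}$ (Definition~\ref{gp-nr-gm}). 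Injectivity of the first map is obvious since it is a restriction of the identity inclusion of $\tilde A^0_\nr(BG,M_n)$ in $A^0(BG,M_n)$.

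The only substantive point is exactness in the middle. Let $x\in A^0_{\nab}(BG,M_n)$ with $\partial_{D,g}(x)=0$ for every pair $(D,g)$. Taking the intersection over $(D,g)$ gives $x\in A^0_{\NR}(BG,M_n)$, which by Theorem~\ref{thmprincipal} equals $A^0_\nr(BG,M_n)$. Since $A^0_{\nab}(BG,M_n)\subset \tilde A^0(BG,M_n)$ (apply the definition to the trivial subgroup $A=1$, noting that $\Spec k= B1$ and that $A^0(\Spec k,M_n)=M_n(k)$ is exactly the target of the unit map defining $\tilde A^0$), one has $x\in \tilde A^0_\nr(BG,M_n)$, as required.

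I do not expect any serious obstacle: the content of the corollary is essentially a bookkeeping exercise, extracting from the ``absolute'' exact sequence of Theorem~\ref{t1} its restriction to the subfunctor $A^0_{\nab}$. The only thing to be careful about is that all maps in \eqref{se-raffine} are honest restrictions of the maps in Theorem~\ref{t1}; this is guaranteed by the functoriality of $A^0(B-,M_\bullet)$ in $G$, which was established in Section~\ref{s1} (Proposition~\ref{foncteurF(BG)}) and used in Proposition~\ref{foncteurNR}.
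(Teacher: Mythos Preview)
Your proof is correct and follows essentially the same approach as the paper: the paper assembles the same three ingredients (Th\'eor\`eme~\ref{thmprincipal}, Lemme~\ref{l-graceF}, Lemme~\ref{l-genB}) into a commutative diagram whose first row is the exact sequence $0\to \tilde A^0_\nr(BG,M_n)\to \tilde A^0(BG,M_n)\to \bigoplus_{D,g} A^0(BD,M_{n-1})$ and deduces \eqref{se-raffine} by the diagram chase you spell out. Your explicit verification that $A^0_{\nab}\subset \tilde A^0$ via $A=1$ and that the kernel of all $\partial_{D,g}$ inside $A^0_{\nab}$ lands in $\tilde A^0_\nr$ is exactly what the diagram encodes.
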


\begin{proof}
D'apr\`es le th\'eor\`eme \ref{thmprincipal}, le lemme \ref{l-genB} et le lemme \ref{l-graceF}, on a le diagramme suivant
\begin{displaymath}
\xymatrix{
0 \ar[r]& \tilde A^0_{\nr}(BG,M_n) \ar @{^{(}->}[dr] \ar[r]& \tilde A^0(BG,M_n)\ar[r] & \bigoplus_{D,g} A^0(BD,M_{n-1})\\
&& A^0_{\nab}(BG,M_n)\ar[r]\ar @{^{(}->}[u]& \bigoplus_{D,g} A^0_{\nab}(BD,M_{n-1})\ar @{^{(}->}[u]
}
\end{displaymath}
o\`u la premi\`ere ligne est une suite exacte. D'o\`u \eqref{se-raffine}.
\end{proof}

Pour le théorème suivant, rappelons la définition \cite[déf. 5.4]{BG1}:

\begin{defn}\label{connectif}
Un module de cycles $M$ est dit \textit{connectif} s'il existe $n_0\in \Z$ tel que $M_n=0$ pour tout $n<n_0$. 
\end{defn}

\begin{thm}\label{c-gpnab}
Avec l'hypoth\`ese de la d\'efinition \ref{gpnab}, soient $M$ un module de cycles et $n\in \Z$.
\begin{thlist}
	\item  Si $A^0_{\nab}(BG,M_n)=0$, alors $\tilde{A}^0_{\nr}(BG,M_n)=0$.
	\item  Si $M$ est connectif et  
	\[\forall H\subset G, \forall m\le n, \tilde{A}^0_{\nr}(BH,M_m)=0,\]
	alors
	\[\forall H\subset G, \forall m\le n, A^0_{\nab}(BH,M_m)=0.\]
\end{thlist}
\end{thm}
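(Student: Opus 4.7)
Les deux assertions reposent de mani\`ere essentielle sur la suite exacte \eqref{se-raffine} du corollaire \ref{cor-raffine}, qu'on peut appliquer \`a tout sous-groupe $H\subset G$ (puisqu'un tel $H$ est encore un groupe fini constant d'exposant divisant $m$, et que $\mu_m\subset k$).

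Pour (i), il suffit d'invoquer directement la suite exacte \eqref{se-raffine} appliqu\'ee \`a $G$: elle fournit une injection
\[\tilde A^0_{\nr}(BG,M_n)\hookrightarrow A^0_{\nab}(BG,M_n),\]
d'o\`u le r\'esultat si le second membre est nul.

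Pour (ii), je proc\`ederais par r\'ecurrence sur l'entier $m\le n$, l'indice $H$ parcourant librement les sous-groupes de $G$ \`a chaque \'etape. La connectivit\'e de $M$ (d\'efinition \ref{connectif}) fournit l'amor\c cage: pour $m<\delta(M)$, on a $M_{m}=0$, donc $A^0(BH,M_m)=0$ et a fortiori $A^0_{\nab}(BH,M_m)=0$. Pour l'h\'er\'edit\'e, supposons l'\'enonc\'e d\'emontr\'e pour tout indice $m'<m$ (et tout $H\subset G$). Fixons $H\subset G$ et appliquons le corollaire \ref{cor-raffine} \`a $H$ et \`a l'indice $m$: on a une suite exacte
\[0\to \tilde A^0_{\nr}(BH,M_m) \to A^0_{\nab}(BH,M_m)\xrightarrow{(\partial_{D,g})} \bigoplus_{(D,g)} A^0_{\nab}(BD,M_{m-1}),\]
o\`u $D$ parcourt les sous-groupes de $H$ (donc de $G$). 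Par l'hypoth\`ese de (ii) et la condition $m\le n$, le premier terme est nul. Par hypoth\`ese de r\'ecurrence (appliqu\'ee \`a $m'=m-1\le n$ et $H'=D\subset G$), chaque terme du membre de droite est nul. On conclut que $A^0_{\nab}(BH,M_m)=0$, ce qui cl\^ot la r\'ecurrence.

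Aucune \'etape n'est r\'eellement d\'elicate: toute la substance est concentr\'ee dans le corollaire \ref{cor-raffine}, lui-m\^eme cons\'equence du th\'eor\`eme principal \ref{thmprincipal} et du lemme de Bogomolov \ref{l-genB}. La seule subtilit\'e \`a ne pas n\'egliger est la n\'ecessit\'e de d\'erouler la r\'ecurrence \emph{simultan\'ement} sur tous les sous-groupes de $G$, afin de pouvoir utiliser le corollaire \ref{cor-raffine} qui fait intervenir dans le terme de droite des sous-groupes $D$ strictement plus petits que $H$.
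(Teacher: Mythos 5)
Votre démonstration est correcte et suit essentiellement la même voie que celle du texte: (i) via l'inclusion $\tilde A^0_{\nr}\subset A^0_{\nab}$ (lemme \ref{l-graceF}, qui est exactement l'injectivité dans \eqref{se-raffine}), et (ii) par récurrence sur $m$ en utilisant l'exactitude de \eqref{se-raffine}, l'amorçage provenant de la connectivité de $M$. Votre remarque sur la nécessité de mener la récurrence simultanément sur tous les sous-groupes $H\subset G$ rend seulement explicite ce que le texte expédie par \og c'est vrai aussi pour tout sous-groupe $H$ de $G$\fg.
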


\begin{proof}
On a tout de suite (i) gr\^ace au lemme \ref{l-graceF}. On montre (ii) par r\'ecurrence sur $m$. Comme $M$ est connectif, c'est \'evident pour $m$ petit. Supposons que ce soit vrai pour $m-1$. Utilions la suite exacte \eqref{se-raffine} avec $A^0_{\nab}(BD,M_{m-1})=0\ \forall D$, on a
\[A^0_{\nab}(BG,M_m)\osi \tilde A^0_{\nr}(BG,M_m)=0.\]

Et c'est vrai aussi pour tout sous-groupe $H$ de $G$.
\end{proof}

\section{Reformulation: $0$-cycles sur le compactifi\'e de $BG$}\label{s11}



\subsection{Le foncteur $\overline{CH}_0$}

\begin{defn}\label{Chgpcomp}
Soient donn\'es $X\in \Sm(k)$ et une compactification $j:X\inj \bar{X}$ o\`u $\bar{X}$ est projectif et lisse. 
 On d\'efinit
\[\overline{CH}_0(X)=CH_0(\bar{X})=A_0(\bar{X},K^M_0).\]
\end{defn}

\begin{prop}\label{CH_0}
Si $k$ est de caract\'eristique z\'ero, $\overline{CH}_0(X)$ ne d\'e\-pend que de $X$ et $\overline{CH}_0$ d\'efinit un foncteur homotopique et pur en coniveau $\geq 1$.
\end{prop}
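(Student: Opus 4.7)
Mon plan proc\`ede en quatre \'etapes, reposant sur l'ingr\'edient cl\'e suivant: pour tout morphisme birationnel propre $f: \bar Y \to \bar X$ entre vari\'et\'es lisses projectives, le push-forward $f_*: CH_0(\bar Y) \to CH_0(\bar X)$ est un isomorphisme. Ceci r\'esulte, en caract\'eristique $0$, du th\'eor\`eme de factorisation faible d'Abramovich-Karu-Matsuki-W{\l}odarczyk, qui ram\`ene au cas d'un \'eclatement $\mathrm{Bl}_C \bar X \to \bar X$ de centre lisse $C$ de codimension $\geq 2$; dans ce cas, la formule de d\'ecomposition des groupes de Chow sous \'eclatement montre que les classes suppl\'ementaires provenant du diviseur exceptionnel contribuent \`a $CH_*(C)$ en degr\'e strictement positif et n'affectent donc pas $CH_0$.

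L'\textit{ind\'ependance de la compactification} s'en d\'eduit imm\'ediatement: deux compactifications lisses projectives $\bar X_1, \bar X_2$ sont, par Hironaka, domin\'ees par une troisi\`eme $\tilde X$ via des morphismes birationnels propres $g_i: \tilde X \to \bar X_i$, et les $(g_i)_*$ fournissent un isomorphisme canonique $CH_0(\bar X_1) \iso CH_0(\bar X_2)$. Pour la \textit{fonctorialit\'e}, un morphisme plat $f: X \to Y$ dans $\Sm_\fl$ s'\'etend en une application rationnelle $\bar X \dashrightarrow \bar Y$; en r\'esolvant l'ind\'etermination par Hironaka, on obtient un morphisme (automatiquement propre) $\tilde f: \tilde X \to \bar Y$ dont le push-forward, compos\'e avec l'isomorphisme $CH_0(\bar X) \iso CH_0(\tilde X)$, d\'efinit $\overline{CH}_0(f)$. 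L'\textit{invariance par homotopie} pour un fibr\'e vectoriel $p:V \to X$ de rang $r$ utilise la relation $V \sim_\st X$ (puisque Zariski-localement $V \simeq X\times \A^r$) et la formule des fibr\'es projectifs $CH_0(\bar X\times \bP^r) \iso CH_0(\bar X)$, qui se combinent \`a l'ind\'ependance de l'\'etape 1 pour fournir un isomorphisme compatible \`a $p$. Enfin, la \textit{puret\'e en coniveau $\geq 1$} est tautologique: si $U \subset X$ est un ouvert avec $\delta(X,U)\ge 1$, alors $U$ est dense dans $X$ int\`egre, donc toute compactification lisse projective de $X$ est aussi une compactification de $U$.

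L'obstacle principal r\'eside dans la fonctorialit\'e (\'etape 2): il faut organiser soigneusement les choix de compactifications compatibles afin que la composition $\overline{CH}_0(g \circ f) = \overline{CH}_0(g) \circ \overline{CH}_0(f)$ soit v\'erifi\'ee, ce qui passe par la comparaison via une compactification commune r\'esolvant simultan\'ement les ind\'eterminations des applications rationnelles de $\bar X\dashrightarrow \bar Y$ et de $\bar Y\dashrightarrow \bar Z$. L'invariance birationnelle de $CH_0$ pour les vari\'et\'es lisses propres, utilis\'ee partout, est classique mais n\'ecessite Hironaka et la factorisation faible, ce qui explique l'hypoth\`ese de caract\'eristique $0$.
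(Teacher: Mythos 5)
Votre démonstration est correcte et aboutit, mais elle suit une route différente de celle du texte. Le papier ne refait pas à la main l'indépendance de la compactification ni la fonctorialité : il les délègue au théorème de localisation de [KS, thm.\ 2.1], qui affirme que le foncteur $S_b^{-1}\Sm^\proj\to S_b^{-1}\Sm$ est une équivalence de catégories ($S_b$ désignant les morphismes birationnels), puis applique le critère d'extension de foncteurs de [KS, \S 3] au foncteur d'homologie de Suslin $F(X)=H_0(X,\Z)$, en utilisant que $H_0(X,\Z)=CH_0(X)$ pour $X$ projectif lisse et que $CH_0$ est un invariant birationnel des variétés projectives lisses (Fulton, ex.\ 16.1.11). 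Cela produit d'un coup la bonne définition de $\overline{CH}_0$, sa fonctorialité, \emph{et} la transformation naturelle $H_0(-,\Z)\to\overline{CH}_0$, qui n'est pas anodine : elle est réutilisée plus loin (remarque \ref{remCH_0} et proposition \ref{prop-dual}). Votre étape 2 reconstruit essentiellement, par domination de compactifications et résolution d'indétermination, le contenu de cette citation ; c'est faisable mais c'est précisément le point que vous identifiez à juste titre comme le plus délicat, et votre construction ne fournit pas la comparaison avec $H_0(-,\Z)$. Par ailleurs, invoquer la factorisation faible d'Abramovich--Karu--Matsuki--W{\l}odarczyk pour l'invariance birationnelle de $CH_0$ est superflu : l'argument classique par la correspondance donnée par l'adhérence du graphe (celui de Fulton, cité par le papier) suffit et est bien plus léger. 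Vos arguments d'homotopie (compactification de $X\times\A^n$ dans $\bar X\times\bP^n$ et formule du fibré projectif) et de pureté (toute compactification de $X$ en est une de $U$) coïncident avec ceux du texte.
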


La partie d\'elicate de cette proposition est la fonctorialit\'e.

\begin{proof}
Notons $\Sm^\proj$ la sous-cat\'egorie pleine de $\Sm$ form\'ee des sch\'emas projectifs, $S_b$ la classe des morphismes birationnels de $\Sm$. D'apr\`es \cite[thm. 2.1]{KS}, on a un diagramme commutatif de foncteurs
\begin{displaymath}
\xymatrix{
\Sm^\proj\ar[d]\ar @{^{(}->}[r] & \Sm \ar[d]\\
S^{-1}_b\Sm^\proj \ar[r]^{\quad \sim} & S^{-1}_b\Sm
}
\end{displaymath}
o\`u le foncteur du bas est une \'equivalence de cat\'egories.

Consid\'erons le foncteur $F: \Sm \to \Ab$ donn\'e par $F(X)=H_0(X,\Z)$ (homologie de Suslin, \cf \cite[d\'ef. 3.14]{BG1}). Il v\'erifie les hypoth\`eses de \cite[\S 3]{KS} par rapport au digramme ci-dessus. En effet, pour $X$ projectif et lisse, $F(X)=H_0(X,\Z)=CH_0(X)$  et d'apr\`es Fulton \cite[ex. 16.1.11, p .312]{fulton}, $CH_0(X)=CH_0(Y)$ pour tout $X\to Y$ dans $S_b$. Donc on a un isomorphisme naturel de foncteurs:
\[(\Sm^\proj\to \Sm \by{F} \Ab)\simeq (\Sm^\proj\to S_b^{-1}\Sm^\proj\by{CH_0} \Ab).\]


En appliquant \cite[\S 3 et thm. 2.1]{KS}, on obtient une transformation naturelle de foncteurs
\[H_0(X,\Z)\to \overline{CH}_0(X)\]
o\`u $\overline{CH}_0(X):=CH_0(\bar{X})$ pour $\bar{X}$ une compactification lisse de $X$: c'est exactement le foncteur de la d\'efinition \ref{Chgpcomp}, qui est donc bien d\'efini.

De plus, $\overline{CH}_0$ est homotopique et pur en coniveau $\geq 1$. En effet, soit $U\inj X$ une immersion ouverte, alors $\bar{X}$ est aussi un compactifi\'e de $U$:
\[U\inj X \inj \bar{X}.\]
Donc
\[\overline{CH}_0(U)=\overline{CH}_0(\bar{X})=CH_0(X).\]

D'autre part, soit $f:E\to X$ un fibr\'e vectoriel. Soit $U$ un ouvert de $X$, on a le cart\'esien suivant
\begin{displaymath}
\xymatrix{
j^*E \ar @{^{(}->}[r]|o\ar[d]& E \ar[d]^{f}\\
U \ar @{^{(}->}[r]^{j}|o& X
}
\end{displaymath}
o\`u $j^*E\cong U\times \A^n$ quand $U$ est assez petit. Alors 
\begin{displaymath}
\xymatrix{
\overline{CH}_0(j^*E) \ar[r]^{\sim}\ar[d]& \overline{CH}_0(E) \ar[d]\\
\overline{CH}_0(U) \ar[r]^{\sim}& \overline{CH}_0(X).
}
\end{displaymath}
Il nous am\`ene \`a consid\'erer le cas $E=X\times \A^n$.

Soit $j:X\inj Y$ une compactification de $X$, nous avons le diagramme commutatif suivant:
\begin{displaymath}
\xymatrix{
E=X\times \A^n \ar @{^{(}->}[r]\ar[d]& Y\times \A^n \ar @{^{(}->}[r]& Y\times \bP^n \ar[d]\\
X \ar @{^{(}->}[rr]&& Y
}
\end{displaymath}
o\`u $Y\times \bP^n$ est une compactification lisse de $E$. D'apr\`es \cite[III, thm. 3.3]{fulton}, on a $CH_0(Y\times \bP^n)\iso CH_0(Y)$. Donc 
\[\overline{CH}_0(E)=\overline{CH}_0(X).\]
\end{proof}

\begin{rem}\label{remCH_0}
Comme sous-produit de la d\'emonstration de la proposition \ref{CH_0}, on obtient un morphisme de foncteurs
\[H_0(-,\Z) \to \overline{CH}_0.\]
Ce morphisme est surjectif d'apr\`es \cite[prop. 6.1]{kahn0}.

Soit $G\in \Grp$: d'apr\`es la proposition \ref{CH_0} et \cite[d\'ef. 2.14]{BG1}, $\overline{CH}_0(BG)$ est bien d\'efini et on a une surjection
\[H_0(BG,\Z) \surj \overline{CH}_0(BG).\]
\end{rem}

\begin{qn} Peut-on d\'efinir $\overline{CH}_0(-)$ \emph{a priori} comme quotient de $H_0(-,\Z)$, sans utiliser la r\'esolution des singularit\'es (sous-jacente \`a la preuve de la proposition \ref{CH_0})?
Voir remarque \ref{r9.6} pour une r\'eponse dans le cas particulier de $BG$.
\end{qn}

\subsection{Un r\'esultat dual du th\'eor\`eme \ref{thmprincipal}}\label{c-eq}

\begin{prop}\label{prop-dual}
Si $k$ est de caract\'eristique z\'ero, on a la suite exacte suivante:
\begin{equation}\label{eq-dual}
\bigoplus_{D\subset G,g:I\to Z_G(D)} H_{-1}(BD,\Z(-1)) \to H_0(BG,\Z) \to \overline{CH}_0(BG)\to 0. 
\end{equation}
\end{prop}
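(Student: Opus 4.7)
Le plan est de d\'eduire cette proposition comme le dual (au sens de Yoneda) du th\'eor\`eme \ref{thmprincipal}, en utilisant la corepr\'esentabilit\'e des foncteurs $M \mapsto A^0(X, M)$ dans la cat\'egorie des modules de cycles sur $k$.

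Comme $k$ est de caract\'eristique z\'ero, tout module de cycles $\Z[1/p]$-lin\'eaire est limite inductive filtrante de modules connectifs (\cf \S 5.1), si bien qu'il suffit de travailler avec des $M$ connectifs. Pour un tel $M$, le th\'eor\`eme \ref{thmprincipal} combin\'e aux identifications du \S \ref{s7.4} fournit une suite exacte, naturelle en $M$:
$$0 \to A^0_\nr(BG, M_0) \to A^0(BG, M_0) \by{(\partial_{D,g})} \bigoplus_{(D,g)} A^0(BD, M_{-1}).$$
D'apr\`es la proposition \ref{prop-birationnel} (applicable gr\^ace \`a la r\'esolution des singularit\'es en caract\'eristique z\'ero), on a $A^0_\nr(BG, M_0) = A^0(\bar X, M_0)$, o\`u $\bar X$ est une compactification lisse projective d'un mod\`ele g\'eom\'etrique $U/G$ de $BG$. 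D'apr\`es Merkurjev \cite[th. 2.10]{merkurjev}, ce foncteur est corepr\'esent\'e par le module de cycles $K^{\bar X}$, avec $K^{\bar X}_0(k) = CH_0(\bar X) = \overline{CH}_0(BG)$. Le terme du milieu est corepr\'esent\'e par $H^{BG}$ (lemme \ref{H^BG}) avec $H^{BG}_0(k) = H_0(BG, \Z)$, et le terme de droite $A^0(BD, M_{-1})$ est corepr\'esent\'e par un d\'ecal\'e appropri\'e de $H^{BD}$, dont la valeur en $k$ en degr\'e $0$ vaut $H^{BD}_1(k) = H_{-1}(BD, \Z(-1))$.

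L'application du lemme de Yoneda traduit alors la suite exacte de foncteurs repr\'esentables ci-dessus en une suite exacte \`a droite de modules de cycles corepr\'esentants:
$$\bigoplus_{(D,g)} H^{BD}\langle 1\rangle \by{\partial^*} H^{BG} \to K^{\bar X} \to 0,$$
o\`u $\langle 1\rangle$ d\'esigne le d\'ecalage dont la fibre en degr\'e $0$ en $k$ est $H_{-1}(-, \Z(-1))$. L'\'evaluation de cette suite en le corps $k$ en degr\'e z\'ero donne la suite cherch\'ee.

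La principale difficult\'e r\'eside dans le suivi pr\'ecis des d\'ecalages: confirmer que $M \mapsto A^0(BD, M_{-1})$ est bien corepr\'esent\'e par le d\'ecal\'e prescrit de $H^{BD}$, et v\'erifier que le morphisme abstrait $\partial^*$ produit par Yoneda co\"\i ncide avec le morphisme g\'eom\'etrique construit \`a partir de la d\'ecomposition motivique $M(\G_m) = \Z \oplus \Z(1)[1]$ compos\'ee avec le morphisme classifiant $B(D \times \mu_m) \to BG$; de m\^eme, il faut v\'erifier que la surjection $H^{BG} \to K^{\bar X}$ s'\'evalue en $k$ en la surjection canonique $H_0(BG, \Z) \to \overline{CH}_0(BG)$ de la remarque \ref{remCH_0}.
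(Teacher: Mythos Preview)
Your approach is essentially identical to the paper's: both apply the Yoneda lemma to the corepresentability of $M \mapsto A^0(-, M_0)$ in the category $\CM$ of cycle modules (via \cite[th.~1.3]{kahn0} or equivalently \cite[th.~2.10]{merkurjev}), using Theorem~\ref{thmprincipal} as input and then evaluating the resulting exact sequence of corepresenting objects at the base field $k$ in degree~$0$. The paper resolves the difficulty you flag at the end by giving an explicit description of the first map via the decomposition $M(\G_m) = \Z \oplus \Z(1)[1]$ and the classifying morphism $\G_m \to B\mu_m$; note also that your detour through connective modules is unnecessary, since Theorem~\ref{thmprincipal} holds for all $M$ and Yoneda is applied directly in the full abelian category $\CM$.
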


\begin{proof}
D'apr\`es le th\'eor\`eme \ref{thmprincipal}, on a:
\[0\to A^0_{\nr}(BG,M_0) \to A^0(BG,M_0) \to \bigoplus_{D,g:I\to Z_G(D)} A^0(BD,M_{-1}).\]
Soient $U_G$ un $G$-torseur lin\'eaire de coniveau $\geq c$ et $\bar{X}$ une compactification lisse de $U_G/G$. D'apr\`es \cite[d\'ef. 6.1 et prop. 6.6]{BG1}, on a
\[A^0_{\nr}(BG,M_0)=A^0_{\nr}(U_G/G,M_0)=A^0(\bar{X},M_0).\]
De plus, d'apr\`es \cite[thm. 1.3]{kahn0}, pour $X$ lisse, on a
\[A^0(X,M_0)\cong \Hom_{\CM}(H^X,M) \]
o\`u $\CM$ est la cat\'egorie des modules de cycles et pour tout corps $F/k$,
\[H^X_n(F)=H_{-n}(X_F,\Z(-n)).\]
Donc on a la suite exacte:
\begin{multline*}
0\to \Hom_{\CM}(H^{\bar{X}},M) \to \Hom_{\CM}(H^{U_G/G},M)\\
 \to \bigoplus_{D,g:I\to Z_G(D)} \Hom_{\CM} (H^{U_D/D}[1],M).
\end{multline*}
En appliquant le lemme de Yoneda dans la cat\'egorie ab\'elienne $\CM$, on en tire la suite exacte
\[\bigoplus_{D,g:I\to Z_G(D)}H^{U_D/D}[1]\to H^{U_G/G}\to H^{\bar{X}}\to 0.\]
D'o\`u la suite exacte suivante sur le corps de base $k$:
\[\bigoplus_{D,g:I\to Z_G(D)} H_{-1}(BD,\Z(-1)) \to H_0(BG,\Z) \to H_0(\bar{X},\Z) \to 0\]
o\`u $H_0(\bar{X},\Z)=A_0(\bar{X},K^M_0)=\overline{CH}_0(BG)$ (\cf d\'ef. \ref{Chgpcomp}).

Le morphisme $H_{-1}(BD,\Z(-1)) \to H_0(BG,\Z)$ est donn\'e explicitement par
\[\begin{CD}
	H_{-1}(BD,\Z(-1))&=&\Hom_{\DM}(\Z,M(U_D/D)(1)[1])\\
	                 &&@VVV\\
	                 &&\Hom_{\DM}(\Z,M(U_D/D)\otimes M(\G_m))\\
	                 &&@V{(*)}VV\\
	                 &&\Hom_{\DM}(\Z,M(U_D/D)\otimes M(U_{\mu_m}/\mu_m))\\
	                 &&@VVV\\
	H_0(BG,\Z)       &=&\Hom_{\DM}(\Z,M(U_G/G))\nonumber
\end{CD}\]
o\`u $(*)$ est donn\'e par $\G_m \to B\mu_m$.
\end{proof}

\begin{rem}\label{r9.6}
Si $k$ est de caract\'eristique $p$, la suite exacte \eqref{eq-dual} donne une d\'efinition de ``$\overline{CH}_0(BG)$''.
\end{rem}

\subsection{Conditions \'equivalentes pour avoir des groupes non ramifi\'es triviaux}

\begin{defn}\label{gpnrtrivial}
Soient $k$ un corps et $X$ un sch\'ema lisse sur $k$. On dit que $X$ \textit{n'a pas d'invariants non ramifi\'es}\footnote{de type motivique, \`a cause des invariants dans le groupe de Witt\dots} si pour tout module de cycles $M$, 
\[M_n(k)\iso A^0_{\nr}(X,M_n).\]
\end{defn}

\begin{thm}\label{ceq-gpnrtr}
Si $k$ est de caract\'eristique z\'ero, alors les conditions suivantes sont \'equivalentes:
\begin{enumerate}
	\item [a)] $X$ n'a pas d'invariants non ramifi\'es (d\'ef. \ref{gpnrtrivial});
	\item [b)] L'application $deg: \overline{CH}_0(X_F)\to \Z$ est bijective pour toute extension $F/k$.
\end{enumerate}
\end{thm}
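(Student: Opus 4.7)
Je commencerais par me ramener au cas o\`u $X$ est projectif et lisse. La caract\'eristique \'etant nulle, Hironaka fournit une compactification lisse projective $j:X\inj \bar X$; la proposition \ref{prop-birationnel} donne $A^0_\nr(X,M_n)=A^0(\bar X,M_n)$ pour tout module de cycles $M$, et par d\'efinition \ref{Chgpcomp} on a $\overline{CH}_0(X_F)=CH_0(\bar X_F)$. On peut donc supposer $X=\bar X$ projectif lisse, auquel cas (a) devient ``$M_n(k)\iso A^0(\bar X,M_n)$ pour tout $M$ et tout $n$'' et (b) devient ``$\deg:CH_0(\bar X_F)\to\Z$ bijectif pour tout $F/k$''.

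J'utiliserais ensuite la corepr\'esentabilit\'e de Merkurjev \cite[th. 2.10]{merkurjev} rappel\'ee dans la sous-section ``Repr\'esentabilit\'e'' de \S \ref{s6}: le foncteur $M\mapsto A^0(\bar X,M)$ est corepr\'esent\'e par le module de cycles $K^{\bar X}$ d\'efini par $K^{\bar X}_n(F)=A_0(\bar X_F,K^M_n)$ (donc en particulier $K^{\bar X}_0(F)=CH_0(\bar X_F)$), tandis que le foncteur $M\mapsto M(k)$ n'est autre que $M\mapsto \Hom_\CM(K^M,M)$. Par le lemme de Yoneda gradu\'e, la transformation naturelle $M_n(k)\to A^0(\bar X,M_n)$ correspond au push-forward $\pi_*:K^{\bar X}\to K^M$ associ\'e au morphisme structural $\pi:\bar X\to \Spec k$. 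La condition (a) \'equivaut donc \`a ``$\pi_*$ est un isomorphisme de modules de cycles'', et (b) \`a ``$\pi_*$ l'est en degr\'e $0$'' (puisque $K^M_0=\Z$); l'implication (a)$\Rightarrow$(b) en d\'ecoule imm\'ediatement.

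Pour (b)$\Rightarrow$(a), j'appliquerais la d\'ecomposition de la diagonale \`a la Bloch--Srinivas. En appliquant (b) sur $F=k(\bar X)$, la classe du point g\'en\'erique $[\eta]\in CH_0(\bar X_{k(\bar X)})=\Z$ co\"\i ncide avec la restriction d'un $0$-cycle $z$ de degr\'e $1$ sur $\bar X$; en \'epaississant sur un ouvert dense $U\subset \bar X$, la suite exacte de localisation pour $CH^d(\bar X\times \bar X)$ (avec $d=\dim \bar X$) fournit une d\'ecomposition
\[ [\Delta_{\bar X}]=[\bar X\times z]+i_*(W)\quad \text{dans}\quad CH^d(\bar X\times \bar X),\]
o\`u $i:D\times \bar X\hookrightarrow \bar X\times \bar X$ est l'inclusion du ferm\'e $D=\bar X-U\subsetneq \bar X$. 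Interpr\'et\'ee comme une \'egalit\'e entre correspondances agissant sur $A^0(\bar X,M_n)$ via les produits et pushforwards propres de \cite[\S 12]{rost}, elle donne: $[\Delta_{\bar X}]$ agit comme l'identit\'e, $[\bar X\times z]$ se factorise par la composition $A^0(\bar X,M_n)\to M_n(k)\hookrightarrow A^0(\bar X,M_n)$, et $i_*(W)$ agit trivialement parce qu'il se factorise par une restriction au ferm\'e propre $D$. Tout $\alpha \in A^0(\bar X,M_n)$ est donc constant, prouvant (a).

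Le point technique d\'elicat est la justification rigoureuse de l'annulation de l'action de $i_*(W)$ sur $A^0$, le ferm\'e $D$ n'\'etant pas n\'ecessairement lisse; on s'en sort soit en prenant une r\'esolution de $D$ (disponible car $\car k=0$), soit en raisonnant via les points g\'en\'eriques de $D$, selon l'approche classique de Bloch--Srinivas--Colliot-Th\'el\`ene. Alternativement, l'\'enonc\'e est essentiellement le r\'esultat principal de Merkurjev, \emph{Unramified elements in cycle modules} (JLMS 78, 2008), qu'on peut invoquer directement apr\`es la r\'eduction du premier paragraphe.
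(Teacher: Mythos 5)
Votre proposition est correcte, et sa premi\`ere \'etape plus sa derni\`ere phrase reproduisent exactement la preuve du texte: le papier se contente de r\'eduire au cas projectif lisse via la r\'esolution des singularit\'es et la proposition \ref{prop-birationnel} (qui donnent $\overline{CH}_0(X_F)=CH_0(\bar X_F)$ et $A^0_\nr(X,M_n)=A^0(\bar X,M_n)$), puis d'invoquer tel quel le th\'eor\`eme 2.11 de Merkurjev. La partie centrale de votre argument --- corepr\'esentabilit\'e de $M\mapsto A^0(\bar X,M)$ par $K^{\bar X}$, lemme de Yoneda identifiant (a) \`a ``$\pi_*:K^{\bar X}\to K^M$ est un isomorphisme'', puis d\'ecomposition de la diagonale \`a la Bloch--Srinivas pour (b)$\Rightarrow$(a) --- est en fait une d\'emonstration (correcte dans ses grandes lignes) de ce th\'eor\`eme de Merkurjev, c'est-\`a-dire que vous ouvrez la bo\^\i te noire que le papier laisse ferm\'ee; cela n'apporte rien de plus \`a l'\'enonc\'e mais rend l'argument autonome. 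Deux petits points \`a soigner si vous r\'edigez cette version: le d\'ecalage de graduation ($\Hom_\CM(K^M,M)=M_0(k)$, il faut les translat\'es $M[n]$ pour atteindre $M_n(k)$, ce que votre ``Yoneda gradu\'e'' recouvre), et l'annulation de l'action de $i_*(W)$, qui ne n\'ecessite pas vraiment de r\'esoudre $D$: une correspondance support\'ee dans $D\times\bar X$ avec $D\subsetneq\bar X$ ferm\'e ne domine pas $\bar X$ par la premi\`ere projection, donc son action sur $A^0(\bar X,M_n)\subset M_n(k(\bar X))$ aboutit dans des classes support\'ees sur $D$ et meurt au point g\'en\'erique.
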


\begin{proof}
D'apr\`es la d\'efinition \ref{Chgpcomp} et la r\'esolution des singularit\'es, on a
\[\overline{CH}_0(X_F)=CH_0(\bar{X}_F)\]
o\`u $\bar{X}$ est une compactification propre et lisse de $X$. Donc ce th\'eor\`eme est exactement 
\cite[th 2.11]{merkurjev}.
\end{proof}

\begin{cor}\label{Chgpcomptr}
Soit $G\in \Grp$. Si $k$ est de caract\'eristique z\'ero, alors les conditions suivantes sont \'equivalentes:
\begin{enumerate}
	\item [a)] ``$BG$'' n'a pas d'invariants non ramifi\'es (d\'ef. \ref{gpnrtrivial});
	\item [b)] $ \widetilde{\overline{CH}}_0(BG_F)=0$ pour toute extension $F/k$ (déf. \ref{tildeF(BG)}).
\end{enumerate}
\end{cor}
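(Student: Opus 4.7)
The plan is to reduce the statement to Theorem \ref{ceq-gpnrtr} applied to a concrete finite-dimensional approximation of $BG$. Fix a $G$-torseur lin\'eaire $U$ of sufficiently high coniveau, furnished by Theorem \ref{ttotaro}. By Proposition \ref{A^0_nr} combined with Proposition \ref{prop-birationnel}, and by Proposition \ref{CH_0}, one has canonical identifications
\[A^0_\nr(BG, M_n) = A^0_\nr(U/G, M_n),\qquad \overline{CH}_0(BG) = \overline{CH}_0(U/G)\]
for every cycle module $M$ on $k$ and every $n \in \Z$. Since $G$ is a $k$-group, $U_F$ is again a $G_F$-torseur lin\'eaire of the same coniveau and $(U/G)_F = U_F/G_F$, so both identifications commute with arbitrary base change $F/k$. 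In particular condition (a) of the corollary for ``$BG$'' is by construction condition (a) of Theorem \ref{ceq-gpnrtr} applied to the genuine smooth $k$-variety $X := U/G$.

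With this reduction, Theorem \ref{ceq-gpnrtr} translates (a) into the statement that $\deg: \overline{CH}_0((U/G)_F) \to \Z$ is bijective for every extension $F/k$. To match this with (b), I would use the canonical rational point $e_G: \Spec k \to B_r G$ of example \ref{ex2.2}, which furnishes a section of the structural morphism $BG \to \Spec k$ and, after base change, induces a splitting
\[\overline{CH}_0(BG_F) \;=\; \Z \;\oplus\; \widetilde{\overline{CH}}_0(BG_F),\]
in which $\deg$ is the projection onto the first summand (by Definition \ref{tildeF(BG)}, since the idempotent $\epsilon$ factors through $\Spec k$, which, under $\overline{CH}_0$, identifies the first summand with the image of $\overline{CH}_0(\Spec F) = \Z$). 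Bijectivity of $\deg$ over $F$ is thus equivalent to $\widetilde{\overline{CH}}_0(BG_F) = 0$, which is exactly condition (b) after transport along $\overline{CH}_0((U/G)_F) = \overline{CH}_0(BG_F)$.

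The only delicate point is the base-change compatibility and the independence of the identifications above on the choice of $U$; this is a direct consequence of the ``lemme sans nom'' (construction \ref{propphi}) and of stable birational invariance (corollary \ref{c:st} for $A^0_\nr$, and the birational invariance of $CH_0$ on smooth projective varieties via Proposition \ref{CH_0} for $\overline{CH}_0$). Beyond this bookkeeping, no additional obstacle arises: everything else is a direct instance of the already-established theorem for varieties.
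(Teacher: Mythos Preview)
Your proof is correct and follows essentially the same route as the paper's. Both arguments reduce directly to Theorem \ref{ceq-gpnrtr} and use the rational point $e_G:\Spec k\to B_rG$ to identify bijectivity of the degree map with vanishing of $\widetilde{\overline{CH}}_0(BG_F)$; you are simply more explicit than the paper in spelling out the passage through a concrete model $U/G$ and in checking the base-change compatibility, whereas the paper compresses this into a single commutative triangle. The only quibble is that your citation of Proposition \ref{prop-birationnel} alongside Proposition \ref{A^0_nr} for the identification $A^0_\nr(BG,M_n)=A^0_\nr(U/G,M_n)$ is superfluous: Proposition \ref{A^0_nr} alone gives purity in coniveau $\ge 1$, which is all that is needed here.
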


\begin{proof}
Remarquons que pour $\Spec k \to G$, on a
\begin{displaymath}
\xymatrix{
\overline{CH}_0(BG_F) \ar[r]& \Z\\
\overline{CH}_0(B1)\ar[u]\ar[ur]_{\sim}
}
\end{displaymath}
D'o\`u le r\'esultat d'apr\`es le th\'eor\`eme \ref{ceq-gpnrtr}.
\end{proof}

\begin{cor}
Soit $G$ un groupe fini d'exposant $m$ sur un corps $k$ contenant $\mu_m$, o\`u $m$ est inversible dans $k$. Alors les conditions suivantes sont \'equivalentes:
\begin{enumerate}
	\item [a)] ``$BG$'' n'a pas d'invariants non ramifi\'es (d\'ef. \ref{gpnrtrivial});
	\item [b)] Pour tout module de cycles $M$ et pour tout $n$,
	\[\tilde{A}^0(BG,M_n)\by{\partial_{D,g}} \bigoplus_{D,g} A^0(BD,M_{n-1})\]
	est injectif.
	\item [c)] Pour toute extension $F/k$,
	\[\bigoplus_{D,g} H_{-1}(BD_F,\Z(-1)) \to \tilde{H}_0(BG_F,\Z)\]
	est surjectif.
\end{enumerate}
\end{cor}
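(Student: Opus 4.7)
La strat\'egie est de r\'eduire l'\'equivalence des trois conditions aux r\'esultats d\'ej\`a \'etablis: le th\'eor\`eme principal \ref{thmprincipal} donne (a) $\Leftrightarrow$ (b) directement, tandis que la proposition \ref{prop-dual} et le corollaire \ref{Chgpcomptr} donnent (a) $\Leftrightarrow$ (c) par dualit\'e. L'essentiel consistera \`a v\'erifier que ces identifications survivent au passage aux parties r\'eduites et se comportent bien apr\`es extension du corps de base.

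Pour (a) $\Leftrightarrow$ (b), on commence par observer que la section unit\'e $\Spec k \to BG$ (\cf ex. \ref{ex2.2} a)) scinde le morphisme $A^0(BG,M_n)\to M_n(k)$, d'o\`u une d\'ecomposition
\[A^0(BG,M_n) = M_n(k) \oplus \tilde A^0(BG,M_n),\]
et donc $A^0_{\nr}(BG,M_n) = M_n(k) \oplus \tilde A^0_{\nr}(BG,M_n)$ puisque $M_n(k)\subset A^0_{\nr}(BG,M_n)$. La condition (a) s'\'ecrit donc $\tilde A^0_{\nr}(BG,M_n)=0$ pour tout $(M,n)$. Par le th\'eor\`eme principal \ref{thmprincipal}, on a l'\'egalit\'e $A^0_{\nr}(BG,M_n) = A^0_{\NR}(BG,M_n)$, d'o\`u par restriction aux parties r\'eduites
\[\tilde A^0_{\nr}(BG,M_n) = \Ker\Bigl(\tilde A^0(BG,M_n) \by{(\partial_{D,g})} \bigoplus_{(D,g)} A^0(BD,M_{n-1})\Bigr).\]
La nullit\'e de ce noyau pour tout $(M,n)$ \'equivaut pr\'ecis\'ement \`a (b).

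Pour (a) $\Leftrightarrow$ (c), on applique la proposition \ref{prop-dual} sur chaque extension $F/k$ (notons que $\mu_m \subset k \subset F$ et que $m$ reste inversible dans $F$, si bien que les hypoth\`eses du th\'eor\`eme \ref{thmprincipal} sont pr\'eserv\'ees); on obtient la suite exacte
\[\bigoplus_{(D,g)} H_{-1}(BD_F,\Z(-1)) \to H_0(BG_F,\Z) \to \overline{CH}_0(BG_F) \to 0.\]
En scindant par la fl\`eche $\Spec F\to BG_F$ (la fl\`eche $H_{-1}(BD_F,\Z(-1))\to H_0(BG_F,\Z)$ se factorise par $\tilde H_0(BG_F,\Z)$), on voit que la surjectivit\'e de la fl\`eche de (c) pour toute extension $F/k$ \'equivaut \`a $\widetilde{\overline{CH}}_0(BG_F)=0$ pour toute extension $F/k$. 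Le corollaire \ref{Chgpcomptr} identifie pr\'ecis\'ement cette derni\`ere condition \`a (a).

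Le seul point d\'elicat est d'ordre techniqueÊ: s'assurer que la proposition \ref{prop-dual} et le corollaire \ref{Chgpcomptr} (\'enonc\'es en caract\'eristique z\'ero pour utiliser la r\'esolution des singularit\'es) s'appliquent dans notre cadre. En caract\'eristique z\'ero il n'y a rien \`a faire; en caract\'eristique $p$ (premi\`ere \`a $|G|$), on peut soit supposer $\car k=0$, soit utiliser la remarque \ref{r9.6} qui fait de la suite exacte de \eqref{eq-dual} une d\'efinition de $\overline{CH}_0(BG)$, auquel cas l'argument ci-dessus reste formel. Toute la d\'emonstration est purement formelle une fois admis les r\'esultats \ref{thmprincipal}, \ref{prop-dual} et \ref{Chgpcomptr}.
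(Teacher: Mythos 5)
Votre démonstration est correcte et suit exactement la même voie que celle du texte : (a) $\Leftrightarrow$ (b) via le théorème \ref{thmprincipal} et la définition \ref{gpnrtrivial} (avec le scindage par la section unité), et (a) $\Leftrightarrow$ (c) via la proposition \ref{prop-dual} et le corollaire \ref{Chgpcomptr}. Vous explicitez simplement quelques vérifications de routine (passage aux parties réduites, comportement par extension de $F/k$, cas de la caractéristique positive via la remarque \ref{r9.6}) que le texte laisse implicites.
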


\begin{proof}
$(a)\Leftrightarrow(b)$ gr\^ace au th\'eor\`eme \ref{thmprincipal} et \`a la d\'efinition \ref{gpnrtrivial}. Et $(a)\Leftrightarrow(c)$ \`a cause de la proposition \ref{prop-dual} et du corollaire \ref{Chgpcomptr}.
\end{proof}

\begin{cor}\label{c11.1}
Soit $G$ un groupe fini d'exposant $m$ sur un corps $k$ contenant $\mu_m$, $m$ est inversible dans $k$. Alors les conditions suivantes sont \'equivalentes:
\begin{enumerate}
	\item [$i)$] Pour tout $H\subset G$, ``$BH$'' n'a pas d'invariants non ramifi\'es (d\'ef. \ref{gpnrtrivial});
	\item [$i^{bis})$] Comme i) mais \`a valeurs dans un module de cycles connectif (d\'ef. \ref{connectif}).
	\item [$ii)$] Pour tout $H\subset G$ et pour toute extension $F/k$,
	\[\bigoplus_{A\subset H} H_0(BA_F,\Z) \surj H_0(BH_F,\Z)\]
	o\`u $A$ parcourt les sous-groupes ab\'eliens de $H$.
\end{enumerate}
\end{cor}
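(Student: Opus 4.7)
Plan: on passe par l'assertion interm\'ediaire
\[(*)\quad A^0_{\nab}(BH,M_0)=0 \text{ pour tout } H\subset G \text{ et tout module de cycles } M.\]
Par corepr\'esentabilit\'e (th\'eor\`eme \ref{eqcorepres}) et Yoneda, $(*)$ \'equivaut \`a ce que $\bigoplus_{A\subset H}h_0^{\Nis}(BA)\to h_0^{\Nis}(BH)$ soit un \'epimorphisme dans $\HI$ pour tout $H\subset G$; gr\^ace \`a la r\'esolution de Gersten \cite[24.11]{MVW}, un tel \'epimorphisme se teste sur les sections des corps (si $Q$ d\'esigne le conoyau dans $\HI$, l'injection $Q(\Spec \sO_{X,x})\inj Q(k(X))$ pour $X$ lisse int\`egre montre que $Q(F)=0$ sur tout corps entra\^\i ne $Q=0$), et l'identification $h_0^{\Nis}(X)(F)=H_0(X_F,\Z)$ fournit ainsi $(*)\Leftrightarrow (ii)$.

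L'implication $(i)\Rightarrow (i^{bis})$ est tautologique. Pour $(i^{bis})\Rightarrow (*)$: le th\'eor\`eme \ref{c-gpnab}(ii), appliqu\'e pour chaque $n$, donne $A^0_{\nab}(BH,M_m)=0$ pour tout $H\subset G$ et tout $m\in\Z$ quand $M$ est connectif. Pour $M$ g\'en\'eral, on observe que $A^0_{\nab}(BH,M_0)\subset \tilde A^0(BH,M_0)$ est tu\'e par $|G|$ (proposition \ref{p5.1}), donc s'identifie \`a $A^0_{\nab}(BH,M[1/p]_0)$ puisque $|G|$ est premier \`a $p=\car k$. Or $M[1/p]$ est limite inductive filtrante de modules de cycles connectifs (\cf \S 5), et $A^0_{\nab}(BH,-)$ commute aux limites filtrantes en tant que noyau d'un morphisme entre foncteurs qui y commutent, ce qui \'etablit $(*)$.

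Pour $(*)\Rightarrow (i)$, le th\'eor\`eme \ref{c-gpnab}(i) donne $\tilde A^0_{\nr}(BH,M_0)=0$ pour tout $H,M$. Pour atteindre un degr\'e $n\in\Z$ arbitraire, on d\'ecale le grading: le module de cycles $M^{(n)}$ d\'efini par $M^{(n)}_q=M_{n+q}$ v\'erifie $\tilde A^0_{\nr}(BH,M^{(n)}_0)=\tilde A^0_{\nr}(BH,M_n)$, donc la nullit\'e en degr\'e $0$ pour tout $M$ l'entra\^\i ne en tout degr\'e. La principale difficult\'e sera la v\'erification que tout \'epimorphisme dans $\HI$ se d\'etecte par la surjectivit\'e sur les sections des corps, ce qui repose sur la r\'esolution de Gersten; le reste encha\^\i ne \`a partir des r\'esultats d\'ej\`a \'etablis (th\'eor\`eme principal \ref{thmprincipal}, corollaire \ref{cor-raffine}, th\'eor\`eme \ref{c-gpnab}).
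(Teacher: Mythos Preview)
Your proposal is correct and follows essentially the same strategy as the paper: both routes hinge on the th\'eor\`eme \ref{c-gpnab}, the corepresentability of $A^0(B-,M_0)$ (th\'eor\`eme \ref{eqcorepres}), a Yoneda argument to obtain an epimorphism, and the fact that epimorphisms in $\HI$ are detected on field sections. The only organizational difference is that you insert the intermediate assertion $(*)$ and work in $\HI$ via $h_0^{\Nis}(BH)$, which forces you to extend from connective to arbitrary cycle modules by the colimit/torsion argument; the paper instead runs Yoneda directly in the category $\CM$ of cycle modules via the representing objects $H^{BH}$ (which are themselves connective by lemme \ref{H^BG}), thereby bypassing that extension step. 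For the detection of epimorphisms on fields, the paper invokes \cite[cor.~11.2]{MVW} rather than the Gersten resolution \cite[24.11]{MVW}, but the two are equivalent for this purpose.
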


\begin{proof}
On va montrer $(i)\Rightarrow (i^{bis})\Rightarrow (ii) \Rightarrow (i)$.

$(i)\Rightarrow (i^{bis})$ est \'evident.

$(i^{bis})\Rightarrow (ii)$: Pour tout $H\subset G$ et pour tout $M$ connectif, d'apr\`es le th\'eor\`eme \ref{c-gpnab}, on a
\[A^0(BH,M_n)\inj \bigoplus_A A^0(BA,M_n).\]
D'apr\`es \cite[th. 5.17]{BG1}, 
ceci implique
\[\Hom_{\CM}(H^{BH},M[n])\inj \bigoplus_A \Hom_{\CM}(H^{BA},M[n])\]
pour tout $M$ connectif. Comme $H^{BH},H^{BA}$ sont connectifs  \cite[lemme 5.15]{BG1}, ceci implique par le lemme de Yoneda
\[\forall H\subset G,  \bigoplus_A	H^{BA} \surj H^{BH}.\]
En \'evaluant $H_0$ sur un corps $F$, on a
\[\forall H\subset G, \forall F/k, \bigoplus_{A\subset H} H_0(BA_F,\Z) \surj H_0(BH_F,\Z).\]

$(ii)\Rightarrow (i)$: Pour tout module de cycles $M$, on a \cite[th. 5.17]{BG1}
\[A^0(BH,M_0)\iso \Hom_{\CM}(H^{BH},M)\iso \Hom_{\HI}(h^{Nis}_0(BH),\mathcal{M}_0).\]
Donc $(ii)$ implique
\[ \bigoplus_{A\subset H} h^{Nis}_0(BA)(\Spec F) \surj h^{Nis}_0(BH)(\Spec F)\]
pour tout $F/k$. Pour conclure, on a besoin du lemme suivant:

\begin{lemme}\label{f-epimor}
Soient $\sF,\sG \in \HI$ deux faisceaux Nisnevich avec transferts invariants par homotopie et $f:\sF \to \sG$. Alors,
\[f\ \text{est un \'epimorphisme}\Leftrightarrow \forall F/k, f_F \ \text{est surjectif}.\]
\end{lemme}

\begin{proof}
C'est \'evident pour $\Rightarrow$. Pour $\Leftarrow$, posons $\sH=\Coker f$, on a $\sH_{\Spec F}=0\ \forall F/k$. D'apr\`es \cite[cor. 11.2]{MVW}, $\sH=0$. Donc $f$ est un \'epimorphisme.
\end{proof}

Appliquons le lemme \ref{f-epimor} pour $\sF=\oplus_A h^{Nis}_0(BA),\ \sG=h^{Nis}_0(BH)$, on a
\[\forall H\subset G,\bigoplus_{A\subset H} h^{Nis}_0(BA) \surj h^{Nis}_0(BH).\]
Ceci implique 
\[\forall H\subset G,A^0(BH,M_n) \inj \bigoplus_A A^0(BA,M_n)\]
pour tout module de cycles $M$. On en d\'eduit $(i)$ gr\^ace au lemme \ref{l-graceF}.
\end{proof}

\section{Application: th\'eor\`emes de Bogomolov et de Peyre}\label{bog.peyre}

On retrouve ici les th\'eor\`emes de Bogomolov \cite[thm. 7.1]{ct-s} et de Peyre \cite[thm. 1]{peyre1}.

\subsection{G\'en\'eralit\'e du th\'eor\`eme de Bogomolov }


\begin{lemme}\label{gpnab1}
Soit $k$ un corps contenant $\mu_m$, o\`u $m$ est inversible dans $k$. Soit $G$ un groupe fini  d'exposant $m$. On a
\[A^0_{\nr}(BG,H^2_{\et}(\Z))=A^0_{\nab}(BG,H^2_{\et}(\Z))=0.\]
\end{lemme}

\begin{proof}
On a
\begin{align*}
&A^0_{\nab}(BG,H^2_{\et}(\Z))\\
&=\Ker(A^0(BG,H^2_{\et}(\Z)) \to \bigoplus_A A^0(BA,H^2_{\et}(\Z)))\ (\text{d\'ef. \ref{gpnab}}) \\
                           &=\Ker(H^2_{\et}(BG,\Z) \to \bigoplus_A H^2_{\et}(BA,\Z))\ (\text{\cite[th. 8.3 a), $n=0$]{BG1}}) \\
                           &=\Ker(H^2(G,\Z)\oplus H^2_{\et}(k,\Z) \to \bigoplus_A H^2(A,\Z) \oplus H^2_{\et}(k,\Z)) \text{\cite[(7.4)]{BG1}}\\
                           &=\Ker(\Hom(G,\Q/\Z)\oplus H^2_{\et}(k,\Z) \to \bigoplus_A \Hom(A,\Q/\Z) \oplus H^2_{\et}(k,\Z))\\
                           &=0
\end{align*}
o\`u $A$ parcourt les sous-groupes ab\'eliens de $G$. On conclut avec le lemme \ref{l-graceF}.
\end{proof}

\begin{thm}\label{thm-B}
 Soit $k$ un corps contenant $\mu_m$, o\`u $m$ est inversible dans $k$. Soient $G$ un groupe fini  d'exposant $m$ et $W$ une $k$-repr\'esentation fid\`ele de $G$. On a
\begin{align}\label{geneq-B}
\widetilde{\Br}_{\nr}(k(W)^G)&=\bigcap_{A\in \sB_G} \Ker (H^2(G,k^*)\to H^2(A,k^*)).
\end{align}
o\`u $\sB_G$ est l'ensemble des sous-groupes bicycliques de $G$ et $\widetilde{\Br}_{\nr}(k(W)^G)$ est la partie r\'eduite de $\Br_{\nr}(k(W)^G)=\Br_{\nr}(BG)$ (\cf déf. \ref{tildeF(BG)}).

En particulier, si $k=k_s$ est s\'eparablement clos, on a \cite{bogomolov} 
\begin{equation}\label{eq-B}
\Br_{\nr}(k_s(W)^G)=\bigcap_{A\in \sB_G} \Ker (H^2(G,\Q/\Z)\to H^2(A,\Q/\Z)).
\end{equation}


En g\'en\'eral ($\mu_m \subset k$),
\[\widetilde{\Br}_{\nr}(k(W)^G) \inj \widetilde{\Br}_{\nr}(k_s(W)^G)=\Br_\nr(k_s(W)^G).\]
\end{thm}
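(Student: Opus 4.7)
Mon plan est d'appliquer le corollaire \ref{cor-raffine} à un module de cycles codant le groupe de Brauer, d'exploiter l'annulation fournie par le lemme \ref{gpnab1} pour éliminer le terme de résidu, puis de passer des sous-groupes abéliens aux sous-groupes bicycliques via Künneth.

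D'abord, je prendrai le module de cycles $M$ attaché à $\Z\in \DM_\et$ par la construction du \S\ref{5.1.4}, indexé de telle sorte que $M_q(F)=H^q_\et(F,\Z(q-2))$; ainsi $M_3(F)=\Br(F)$ et $M_2(F)=H^2_\et(F,\Z)$. Le corollaire \ref{cor-raffine} appliqué avec ce module et $n=3$ donnera la suite exacte
\[0\to \tilde A^0_\nr(BG,M_3)\to A^0_\nab(BG,M_3)\by{(\partial_{D,g})} \bigoplus_{(D,g)} A^0_\nab(BD,M_2),\]
dont le terme de droite est nul d'après le lemme \ref{gpnab1} appliqué à chaque sous-groupe $D$ de $G$ (puisque $M_2=H^2_\et(\Z)$). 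J'en déduirai un isomorphisme $\tilde A^0_\nr(BG,M_3)\iso A^0_\nab(BG,M_3)$. Par \eqref{n=1} et la proposition \ref{prop-birationnel}, le membre de gauche s'identifiera à $\widetilde{\Br}_\nr(k(W)^G)$ et celui de droite à $\bigcap_{A\text{ abélien}\subset G} \Ker(H^2(G,k^*)\to H^2(A,k^*))$.

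Pour réduire aux sous-groupes bicycliques, j'invoquerai la dégénérescence de la suite spectrale itérée de Lyndon--Hochschild--Serre (puisque l'action de $G$ sur $k^*$ est triviale et que $|A|$ est inversible dans $k$): pour $A=C_1\times\cdots\times C_r$ d'exposant divisant $m$,
\[H^2(A,k^*)\simeq \bigoplus_i H^2(C_i,k^*)\oplus \bigoplus_{i<j}\Hom(C_i,\Hom(C_j,k^*)).\]
Chaque facteur diagonal $H^2(C_i,k^*)$ est détecté par restriction à $C_i$ (cyclique, donc bicyclique), et chaque facteur croisé est détecté par restriction au sous-groupe bicyclique $C_i\times C_j$. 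Donc si $\alpha\in H^2(G,k^*)$ s'annule sur tout élément de $\sB_G$, son image dans $H^2(A,k^*)$ est nulle pour tout $A$ abélien, ce qui fournira \eqref{geneq-B}.

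Pour les assertions particulières: lorsque $k=k_s$, comme $|G|$ est inversible dans $k_s$ et que la $|G|$-torsion de $k_s^*$ est $\mu_{|G|}\subset\Q/\Z$, on aura $H^i(G,k_s^*)=H^i(G,\Q/\Z)$ pour tout $i>0$; combiné à $\Br(k_s)=0$ qui donne $\widetilde{\Br}_\nr=\Br_\nr$, ceci entra\^\i nera \eqref{eq-B}. L'injection dans le cas général s'obtiendra via la suite exacte de Hochschild--Serre associée à $1\to k^*\to k_s^*\to k_s^*/k^*\to 1$. La principale difficulté du plan sera précisément cette dernière injection: il faudra vérifier qu'une classe dans le noyau de $H^2(G,k^*)\to H^2(G,k_s^*)$, provenant de $\Hom(G,k_s^*/k^*)$, ne peut être non ramifiée au sens de \eqref{geneq-B} à moins d'être nulle.
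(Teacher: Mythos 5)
Votre plan est correct et suit le même squelette que la démonstration du texte : le corollaire \ref{cor-raffine} appliqué au module de cycles $M_q=H^q_\et(\Z(q-2))$ joint au lemme \ref{gpnab1} (appliqué à chaque $D\subset G$) donne $\widetilde{\Br}_\nr(BG)=\Br_\nab(BG)$, puis on identifie les groupes via le théorème \ref{pA.2} a) et \eqref{n=1} — c'est ce dernier couple de références, plutôt que la proposition \ref{prop-birationnel}, qui justifie l'identification $A^0(B?,H^3_\et(\Z(1)))\simeq H^3_\et(B?,\Z(1))$. La seule étape où vous divergez réellement est la réduction des sous-groupes abéliens aux sous-groupes bicycliques : le texte la traite \emph{via} les résidus géométriques, en montrant qu'une classe tuée par restriction à tous les $\langle x, g(I)\rangle$ a tous ses résidus $\partial_{D,g}$ nuls dans $H^2_\et(BD,\Z)=\Hom(D,\Q/\Z)\oplus H^2_\et(k,\Z)$ (un homomorphisme étant détecté sur les sous-groupes cycliques), donc appartient à $A^0_{\NR}=A^0_\nr$ par le théorème \ref{thmprincipal}; vous la traitez de façon purement groupale, par la décomposition de Künneth de $H^2(A,k^*)$ pour $A$ abélien, qui montre directement $\bigcap_{B\in\sB_G}\Ker=\bigcap_{A\ \mathrm{ab}}\Ker$. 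Les deux mécanismes sont valables; le vôtre est plus proche de l'argument originel de Bogomolov, celui du texte a l'avantage de réutiliser la machinerie des résidus déjà en place. Enfin, la « principale difficulté » que vous signalez pour l'injection $\widetilde{\Br}_\nr(k(W)^G)\inj\Br_\nr(k_s(W)^G)$ n'en est pas une : le noyau de $H^2(G,k^*)\to H^2(G,k_s^*)$ s'identifie à $\Hom(G,k_s^*/k^*)$ (grâce à $H^1(G,k^*)\iso H^1(G,\mu_m)\iso H^1(G,k_s^*)$), lequel s'injecte dans $\bigoplus_{A\in\sB_G}\Hom(A,k_s^*/k^*)$ puisque tout homomorphisme est détecté sur les sous-groupes cycliques — c'est exactement la chasse au diagramme qui clôt la démonstration du texte.
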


\begin{proof}
Rappelons que 
\[\Br(k(W)^G)=H^2_{\et}(k(W)^G,\Q/\Z(1))=A^0(BG,H^3_{\et}(\Z(1))).\]
Utilisons le corolaire \ref{cor-raffine} et le lemme \ref{gpnab1}: on a
\begin{multline}\label{v-faible}
\Br_{\nr}(BG) =\Br_{\nab}(BG) \\
\subset \bigcap_{A\in \sB_G}\Ker(A^0(BG,H^3_{\et}(\Z(1))) \to A^0(BA,H^3_{\et}(\Z(1)))).
\end{multline}

En fait, on a \'egalit\'e. En effet, soit $\gamma$ appartenant \`a la partie droite de \eqref{v-faible}, on raisonne comme Peyre \cite[rem. 4]{peyre1}. Soient $D\subset G$ et $g:I=\mu_m\to Z_G(D)$. Soit $x\in D$, alors $A=\left\langle x,I\right\rangle$ est un sous-groupe bicyclique de $G$. On a le diagramme commutatif suivant
\begin{displaymath}
\xymatrix{
H^3_{\et}(BG,\Z(1)))\ar[r]^{\partial_{D,g}}\ar[d]& H^2_{\et}(BD,\Z))\ar[d]\ar @{=}[r]& \Hom(D,\Q/\Z)\oplus H^2_{\et}(k,\Z)\ar[d]\\
H^3_{\et}(BA,\Z(1)))\ar[r]^{\partial_{\left\langle x\right\rangle,g}}& H^2_{\et}(B\left\langle x\right\rangle,\Z))\ar @{=}[r]]& \Hom(\left\langle x\right\rangle, \Q/\Z)\oplus H^2_{\et}(k,\Z).
}
\end{displaymath}
où les égalités résultent de \cite[(7.4)]{BG1}.

Comme l'image de $\gamma$ dans $H^3_{\et}(BA,\Z(1))$ et donc dans $H^2_{\et}(B\left\langle x\right\rangle,\Z)$ est nulle pour tout $x\in D$, son image dans $H^2_{\et}(BD,\Z)$ l'est aussi. Alors $\gamma \in \Br_{\nr}(k(W)^G)$. Ainsi, on obtient 
\begin{align*}
\widetilde{\Br}_{\nr}(k(W)^G)&=\bigcap_{A\in \sB_G} \Ker (\tilde{A}^0(BG,H^3_{\et}(\Z(1)))\to \tilde{A}^0(BA,H^3_{\et}(\Z(1))))\\
&=\bigcap_{A\in \sB_G} \Ker (\tilde{H}^3_{\et}(BG,\Z(1))\to \tilde{H}^3_{\et}(BA,\Z(1)))\ (\text{\cite[(8.6)]{BG1}}).
\end{align*}
 
D'apr\`es \cite[(7.5)]{BG1}, on a $\tilde{H}^3_{\et}(BG,\Z(1))=H^2(G,k^*)$. D'o\`u \eqref{geneq-B}.

 Si $k$ est s\'eparablement clos, on a
\[H^3_{\et}(k_s,\Z(1))=H^2(k_s,\G_m)=\Br(k_s)=0, \quad H^2(G,\Q/\Z(1))\iso H^2(G,k^*).\]
 
 D'o\`u \eqref{eq-B}.

En g\'en\'eral, on a $\widetilde{\Br}_{\nr}(k_s(W)^G)=\Br_{\nr}(k_s(W)^G)$ car $\Br(k_s)=0$. Consid\'erons la longue suite exacte suivante:
\[H^1(G,k^*) \to H^1(G,k_s^*) \to H^1(G,k_s^*/k^*) \to H^2(G,k^*) \to H^2(G,K_s^*)\]

Comme $\mu_m \subset k^* \subset k_s^*$, on a
\[H^1(G,k^*)\osi H^1(G,\mu_m) \iso H^1(G,k_s^*).\]

Donc la suite
\[0 \to H^1(G,k_s^*/k^*) \to H^2(G,k^*) \to H^2(G,K_s^*)\]
est exacte. D'o\`u le diagramme commutatif de suites exactes:
\begin{displaymath}
\xymatrix{
&& 0\ar[d] & 0\ar[d]\\
&& \widetilde{\Br}_{\nr}(BG)\ar[r] \ar[d]& \Br_{\nr}(BG_{k_s})\ar[d]\\
0\ar[r] & H^1(G,k_s^*/k^*)\ar[r]\ar @{-->}[d]& H^2(G,k^*) \ar[r]\ar[d] & H^2(G,k_s^*)\ar[d]\\
0\ar[r] &\displaystyle \bigoplus_{A\in \sB_G}H^1(A,k_s^*/k^*)\ar[r] &\displaystyle \bigoplus_{A\in \sB_G}H^2(A,k^*)\ar[r] &\displaystyle \bigoplus_{A\in \sB_G}H^2(A,k_s^*).
}
\end{displaymath}
La fl\`eche fragment\'ee est injective parce que
\[H^1(G,k_s^*/k^*)=\Hom(G,k_s^*/k^*)\inj \bigoplus_{A\in \sB_G}\Hom(A,k_s^*/k^*)=\bigoplus_{A\in \sB_G}H^1(A,k_s^*/k^*). \]

On en d\'eduit l'inclusion $\widetilde{\Br}_{\nr}(BG)\inj \Br_{\nr}(BG_{k_s})$.
\end{proof}

\subsection{Une g\'en\'eralisation de $A^0_{\NR}(-,M_n)$ inspir\'ee par Bogomolov}

\begin{defn}\label{Fneg-st}
Soit $F:\Sm_\fl^\op \to \Ab$. On d\'efinit
\[F_\neg(X):=\bigcup_{U}\Ker(F(X)\to F(U))=\Ker(F(X)\to F(\Spec k(X))) \]
o\`u $U$ parcourt les ouverts de $X$ et $k(X)$ est le corps des fonctions de $X$, et
\[F_\st(X):=F(X)/F_\neg(X)=\IM(F(X)\to F(\Spec k(X))).\]
Ce sont respectivement la \emph{partie n\'egligeable} et l'\emph{image stable} de $F$ (en $X$). Ils d\'efinissent des foncteurs sur $\Sm_\fl$.
\end{defn}

\begin{lemme}
Supposons $k$ infini. Si $F$ est homotopique et pur en coniveau $\geq c$ (\cf \cite[d\'ef. 3.1]{BG1}), alors $F_\neg,F_\st$ le sont aussi.
\end{lemme}

\begin{proof}
Soit $E\to X$ un fibr\'e vectoriel. Soit $U$ un ouvert de $X$. Consid\'erons le diagramme cart\'esien suivant
\begin{displaymath}
\xymatrix{
 E \ar[d]&\supset & E_U \ar[d]&&\ar @{_{(}->}[ll] E_{\eta}\ar[d]& k(E)\ar @{_{(}->}[l]\\
 X  &\supset & U &\supset & \Spec k(X) 
}
\end{displaymath}
D'apr\`es l'hypoth\`ese sur $F$, on a le diagramme commutatif
\begin{displaymath}
\xymatrix{
 F(E) \ar[r] & F(E_U)\ar[r] & F(E_{\eta})\ar[r] &F( k(E))\\
 F(X) \ar[r] \ar[u]_{\wr}& F(U) \ar[r]\ar[u]_{\wr} & F( k(X))\ar[u]_{\wr} 
}
\end{displaymath}
D'o\`u $F_\neg(X)\iso F_\neg(E)'$ o\`u $F_\neg(E)'=\Ker(F(E) \to F(E_{\eta}))$. 
En fait $F_\neg(E)'=F_\neg(E)$. En effet, on va montrer $F(E_{\eta}) \inj F(\Spec k(E))$. On peut se ramener au cas $X=\Spec k$. Pour tout ouvert $V$ de $E_{\eta}=\A^n_k$, $V(k)$ est non vide. Donc $F(k) \to F(V)$ admet une section et $F(k)\inj F(k(E))$. Alors, on a $F_\neg(X) \iso F_\neg(E)$.

Soit $U\subset X$ un ouvert de $X$ de coniveau $\delta(X,U)\geq c$ (\cf \cite[d\'ef. 2.1]{BG1}). On a aussi le diagramme commutatif suivant:
\begin{displaymath}
\xymatrix{
0 \ar[r] & F_\neg(U) \ar[r]& F(U)\ar[r] & F(\Spec k(U))\\
0 \ar[r] & F_\neg(X) \ar[r] \ar @{-->}[u]& F(X) \ar[r] \ar[u]_{\wr}& F(\Spec k(X))\ar[u]_{\wr} 
}
\end{displaymath}
car $k(U)=k(X)$. D'o\`u $F_\neg(X)\iso F_\neg(U)$. Ainsi, $F_\neg$ est homotopique et pur en coniveau $\geq c$.

Donc d'apr\`es la d\'efinition de $F_\st$, on en d\'eduit tout de suite qu'il est homotopique et pur en coniveau $\geq c$.
\end{proof}

\begin{rem}
Si $G$ est un groupe fini sur un corps infini $k$ contenant $\mu_m$ o\`u $m$ est inversible dans $k$, alors $F_\neg(BG),F_\st(BG)$ sont bien d\'efinis (\cf d\'ef. \ref{gp-nr-gm}). 
\end{rem}

\begin{defn}\label{F_NR}
On d\'efinit
\[F_{\NR}(BG):=\{x\in F(BG)\mid \forall (D,g), \partial_{D,g}(x)\in (F_{-1})_\neg(BD)\}.\]
\end{defn}

\begin{ex}[\protect{\cite{bogomolov1}}]
Soit $F=H^i_{\et}(-,\Z(n))$. Pour $X$ lisse, on retrouve des classes $k$-n\'egligeables de $H^i_{\et}(X,\Z(n))$: 
\[H^i_\neg(X,\Z(n))=\Ker(H^i_{\et}(X,\Z(n)) \to H^i_{\et}(k(X),\Z(n))),\]
et la cohomologie stable de $H^i_{\et}(X,\Z(n))$:
\[ H^i_\st(X,\Z(n))=H^i_{\et}(X,\Z(n))/H^i_\neg(X,\Z(n)).\]

Donc d'apr\`es \cite[prop. 3.9]{BG1}, l'exemple \ref{exF_{-1}(X)}, 1) et la d\'efinition \ref{F_NR}, on a
\begin{multline}\label{H^i_NR(BG)}
H^i_{\NR}(BG,\Z(n))\\
=\{x\in H^i_{\et}(BG,\Z(n))\mid \forall (D,g), \partial_{D,g}(x)\in H^{i-1}_\neg(BD,\Z(n-1)).\}
\end{multline}
\end{ex}

\subsection{Th\'eor\`eme de Peyre}

\begin{defn}\label{per-neg}
Soit $G$ un groupe fini sur un corps $k$ contenant $\mu_m$ o\`u $m$ est inversible dans $k$.

On d\'efinit le groupe $H^3_p(G,\Q/\Z)$ des \textit{classes permutation-n\'egligeables} comme le groupe \cite[d\'ef. 4]{peyre1}
\[\sum_{H\subset G}\Cores^G_H(\IM(H^1(H,\Q/\Z)^{\otimes 2} \by{\cup} H^3(H,\Q/\Z))).\]

Notons $H^4_{Ch}(BG,\Z(2))=\left\langle c_2(\rho)\right\rangle$ le sous-groupe de $H^4_{\et}(BG,\Z(2))$ engendr\'e par les classes de Chern des repr\'esentations $\rho$ de $G$.
\end{defn}

Utilisant \cite[prop. 1 et prop. 2]{peyre1} et \cite[th. 7.1]{BG1}, on obtient
\begin{lemme}\label{eg:per-neg}
Si $k$ est s\'eparablement clos, alors
\[H^3_p(G,\Q/\Z)\otimes \Z[\frac{1}{2}] = H^4_{Ch}(BG,\Z(2)) \otimes \Z[\frac{1}{2}].\] 
\end{lemme}

\begin{thm}
Soit $k$ un corps contenant $\mu_m$ o\`u $m$ est inversible dans $k$. Soient $G$ un $k$-groupe fini d'exposant $m$ et $W$ une repr\'esentation fid\`ele de $G$. On a un isomorphisme:
\begin{equation}\label{geneqP}
H^4_{\NR}(BG,\Z(2))/H^4_{Ch}(BG,\Z(2)) \iso H^4_{\nr}(k(W)^G,\Z(2))
\end{equation}
o\`u $H^4_{\NR}(BG,\Z(2))$ est comme en \eqref{H^i_NR(BG)} et $H^4_{Ch}(BG,\Z(2))$ comme dans la d\'efinition \ref{per-neg}. 

\end{thm}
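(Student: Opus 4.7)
Mon plan est de comparer $H^4_\NR(BG,\Z(2))$ à $H^4_\nr(k(W)^G,\Z(2))$ via la suite exacte du théorème \ref{pA.2} b), transférée à $BG$ grâce aux résultats de coniveau \ref{coniveau-et} et \ref{coniveauA^p}:
\[0 \to CH^2(BG) \to H^4_\et(BG,\Z(2)) \by{\rho} A^0(BG,H^4_\et(\Z(2))) \to 0,\]
puis d'appliquer le théorème principal \ref{thmprincipal}. Le module de cycles pertinent est celui du \S \ref{5.1.4} avec $M_4(F)=H^4_\et(F,\Z(2))$, de sorte que $A^0_\nr(BG,M_4)=H^4_\nr(k(W)^G,\Z(2))$ par définition même de la cohomologie non ramifiée.

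Je vérifierais d'abord que $\rho$ envoie $H^4_\NR$ dans $A^0_\NR$: ceci découle de la naturalité des résidus géométriques $\partial_{D,g}$ (proposition \ref{diag-comm}), compte tenu du fait que $H^3_\neg(BD,\Z(1))=0$ en vertu de l'isomorphisme ``edge'' $H^3_\et(BD,\Z(1))\iso A^0(BD,H^3_\et(\Z(1)))$ fourni par le théorème \ref{pA.2} a) appliqué en $n=1$. Le théorème \ref{thmprincipal} identifie alors $A^0_\NR(BG,M_4)$ à $A^0_\nr(BG,M_4)=H^4_\nr(k(W)^G,\Z(2))$. La surjectivité de l'homomorphisme résultant $H^4_\NR\to H^4_\nr$ suit immédiatement: tout $y\in H^4_\nr$ se relève en $\tilde{x}\in H^4_\et(BG,\Z(2))$ par la suite exacte, et le même isomorphisme edge, appliqué aux résidus $\partial_{D,g}(\tilde{x})$, impose $\partial_{D,g}(\tilde{x})=0$ dans $H^3_\et(BD,\Z(1))$, plaçant $\tilde{x}$ dans $H^4_\NR$.

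Pour le noyau, je montrerais que $CH^2(BG)\subseteq H^4_\NR$ en utilisant la naturalité de $\partial_{D,g}$ pour la comparaison de la cohomologie motivique vers la cohomologie motivique étale: un élément $x\in CH^2(BG)=H^4(BG,\Z(2))$ a un résidu motivique dans $H^3(BD,\Z(1))$, nul puisque $H^i(X,\Z(n))=0$ pour $i>2n$ sur les variétés lisses, donc le résidu étale de son image s'annule. Le noyau de $H^4_\NR\to H^4_\nr$ est donc exactement l'image de $CH^2(BG)$ dans $H^4_\et$. L'obstacle principal sera alors l'identification de cette image avec $H^4_{Ch}(BG,\Z(2))$: l'inclusion $H^4_{Ch}\subseteq CH^2(BG)$ est immédiate, tandis que la réciproque exige de montrer que $CH^2(BG)$ est engendré par les $c_2(\rho)$ et produits $c_1(\rho)c_1(\rho')$ de classes de Chern de représentations. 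Cela est clair pour $G$ abélien via $CH^*(BG)=\operatorname{Sym}^*(\widehat G)$; en général, on combinera le lemme \ref{eg:per-neg} (reliant $H^4_{Ch}$ au groupe $H^3_p$ des classes permutation-négligeables de Peyre) avec la suite exacte du \S \ref{6.6.4} due à Peyre \cite{peyre1} pour identifier les deux sous-groupes. L'isomorphisme cherché en découlera et se spécialisera en le théorème de Peyre.
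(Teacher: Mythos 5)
Votre d\'emonstration suit pour l'essentiel la m\^eme route que celle du texte: la suite exacte du th\'eor\`eme \ref{pA.2} b) transf\'er\'ee \`a $BG$, l'isomorphisme ``edge'' du th\'eor\`eme \ref{pA.2} a) en poids $1$ (qui donne $H^3_\neg(BD,\Z(1))=0$ et identifie donc $H^4_{\NR}(BG,\Z(2))$ \`a l'image r\'eciproque de $A^0_{\NR}(BG,H^4_\et(\Z(2)))$), puis le th\'eor\`eme \ref{thmprincipal}. Votre argument pour l'inclusion $CH^2(BG)\subset H^4_{\NR}(BG,\Z(2))$ --- nullit\'e du r\'esidu motivique dans $H^3(BD,\Z(1))=0$ et compatibilit\'e des r\'esidus au changement de topologie (proposition \ref{diag-comm}) --- est correct, mais un peu d\'etourn\'e: la commutativit\'e du carr\'e form\'e par les deux fl\`eches $\partial_{D,g}$ et l'injectivit\'e de la fl\`eche du bas suffisent, puisque $CH^2(BG)$ s'envoie sur $0$ dans $A^0(BG,H^4_\et(\Z(2)))$. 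La surjectivit\'e se traite comme vous le dites.

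Le point faible est la derni\`ere \'etape, l'identification de l'image de $CH^2(BG)$ dans $H^4_\et(BG,\Z(2))$ avec $H^4_{Ch}(BG,\Z(2))$. L'ingr\'edient n\'ecessaire est le th\'eor\`eme de Totaro \cite[p. 257]{totaro}: $CH^2(BG)$ est engendr\'e par les classes de Chern des repr\'esentations de $G$; c'est pr\'ecis\'ement ce que cite le texte, et rien de moins ne suffit. Le substitut que vous proposez ne peut pas fonctionner tel quel: le lemme \ref{eg:per-neg} n'identifie $H^3_p(G,\Q/\Z)$ et $H^4_{Ch}(BG,\Z(2))$ qu'apr\`es inversion de $2$ et sous l'hypoth\`ese que $k$ soit s\'eparablement clos, et la suite exacte de Peyre rappel\'ee au \S \ref{6.6.4} ne dit rien, par elle-m\^eme, sur les g\'en\'erateurs de $CH^2(BG)$. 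Vous n'obtiendriez donc au mieux que \eqref{geneqP} apr\`es $\otimes\,\Z[\frac{1}{2}]$ et sur un corps s\'eparablement clos, ce qui est strictement plus faible que l'\'enonc\'e. Hormis ce point, l'argument est correct et co\"\i ncide avec celui du texte.
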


\begin{proof}
D'apr\`es le th\'eor\`eme \ref{thmprincipal}, on a:
\[H^3_{\nr}(k(W)^G,\Q/\Z(2))= A^0_{\NR}(BG,H^3_{\et}(\Q/\Z(2))).\]

La suite exacte de \cite[th. 8.3 b)]{BG1} donne une suite exacte
\[0\to CH^2(BG) \to H^4_{\et}(BG,\Z(2)) \to A^0(BG,H^4_{\et}(\Z(2))) \to 0.\]

Alors, on a le diagramme commutatif suivant
\begin{displaymath}
\xymatrix{
0\ar[r]& CH^2(BG) \ar[r] & H^4_{\et}(BG,\Z(2))\ar[r] \ar[d]^{\partial_{D,g}} &A^0(BG,H^4_{\et}(\Z(2)))\ar[d]^{\partial_{D,g}}\ar[r]& 0\\
&& H^3_{\et}(BD,\Z(1))\ar[r]^{\sim}& A^0(BD,H^3_{\et}(\Z(1)))
}
\end{displaymath}
o\`u l'isomorphisme provient de \cite[th 8.3 a)]{BG1}. D'apr\`es \cite[p. 257]{totaro}, $CH^2(BG)$ est engendr\'e par des classes de Chern des repr\'esentations de $G$. D'o\`u on d\'eduit \eqref{geneqP}.
\end{proof}

\begin{rem}
En particulier, si $k$ est alg\'ebriquement clos de caract\'eristique z\'ero, on a d'apr\`es  \cite[th. 7.1 et lemme 7.3, 1)]{BG1}
\begin{multline*} 
H^4_{\et}(BG,\Z(2))\cong H^3(G,H^1_{\et}(k,\Z(2))=H^3(G,\Q/\Z),\\
\text{et}\ H^3_{\et}(BD,\Z(1))=H^3(D,\Z)=H^2(D,\Q/\Z).
\end{multline*}

Donc d'apr\`es le lemme \ref{eg:per-neg} et \eqref{geneqP}, on a:
\[H^3_{\nr}(G,\Q/\Z)/H^3_p(G,\Q/\Z) \surj H^3_{\nr}(k(W)^G,\Q/\Z(2)),\]
 et son noyau est annul\'e par une puissance de $2$, o\`u 
\[H^3_{\nr}(G,\Q/\Z)=\bigcap_{D\subset G,\ g:I\to Z_G(D)}\Ker(H^3(G,\Q/\Z)\by{\partial_{D,g}} H^2(D,\Q/\Z)).\]

On peut montrer que les r\'esidus $\partial_{D,g}$ sont \'egaux \`a ceux de Peyre  \cite[d\'ef. 5]{peyre1}. On retrouve alors le th\'eor\`eme 1 de \cite{peyre1}.
\end{rem}


\begin{rem}
Gr\^ace \`a la suite exacte \eqref{se-raffine} et au th\'eor\`eme \ref{thm-B}, on obtient une g\'en\'eralisation en degr\'e $3$ de la suite exacte de Bogomolov:
\begin{multline}\label{nab-nr3}
0\to A^0_{\nr}(BG,H^3_{\et}(\Q/\Z(2)))\\ \to A^0_{\nab}(BG,H^3_{\et}(\Q/\Z(2))) \by{\partial_{D,g}} \bigoplus_{D,g} \Br_{\nr}(BD).
\end{multline}
\end{rem}

\end{document}